\documentclass[12pt,reqno]{amsart}  

\usepackage{amsmath,amsthm,amssymb,amscd,mathdots,mathtools,enumerate,float,shuffle,stmaryrd,ytableau,graphicx,dsfont}
\usepackage{fancybox}

\usepackage{arydshln}
\usepackage{bbm}
\usepackage{scalerel}
\usepackage{aliascnt}
\usepackage{hyperref}

\definecolor{mylinkcolor}{RGB}{166, 77, 121} 
\definecolor{mycitecolor}{RGB}{17, 85, 204}  
\hypersetup{
    colorlinks=true,
    linkcolor=mylinkcolor,
    citecolor=mycitecolor,
    urlcolor=mycitecolor,
    bookmarksnumbered=true,
    bookmarksopen=true,
}
\usepackage{graphicx}
\usepackage{xparse}
\usepackage{tikz,tikz-cd}
\allowdisplaybreaks
\usetikzlibrary{decorations.pathmorphing,shapes,calc}
\usetikzlibrary{decorations.pathreplacing}

\usepackage{xcolor}
\usepackage[margin=1in]{geometry}

\newtheorem{theorem}{Theorem}[section]
\newtheorem*{theorem*}{Theorem}

\newaliascnt{proposition}{theorem}
\newtheorem{proposition}[proposition]{Proposition}
\aliascntresetthe{proposition}

\newaliascnt{lemma}{theorem}
\newtheorem{lemma}[lemma]{Lemma}
\aliascntresetthe{lemma}

\newaliascnt{corollary}{theorem}
\newtheorem{corollary}[corollary]{Corollary}
\aliascntresetthe{corollary}

\newaliascnt{conjecture}{theorem}

\aliascntresetthe{conjecture}

\newaliascnt{question}{theorem}

\aliascntresetthe{question}

\numberwithin{equation}{section}

\theoremstyle{definition}
\newaliascnt{definition}{theorem}
\newtheorem{definition}[definition]{Definition}
\aliascntresetthe{definition}

\newaliascnt{example}{theorem}
\newtheorem{example}[example]{Example}
\aliascntresetthe{example}

\newaliascnt{remark}{theorem}
\newtheorem{remark}[remark]{Remark}
\aliascntresetthe{remark}

\newaliascnt{lemdef}{theorem}

\aliascntresetthe{lemdef}

\newaliascnt{propdef}{theorem}
\newtheorem{propdef}[propdef]{Definition/Proposition}
\aliascntresetthe{propdef}

\newtheorem{thmA}{Main Theorem}

\mathtoolsset{showonlyrefs} 

\newcommand{\h}{\mathfrak H}
\newcommand{\N}{\mathbb{N}}
\newcommand{\Q}{\mathbb{Q}}
\newcommand{\Z}{\mathbb{Z}}
\newcommand{\R}{\mathbb{R}}

\newcommand{\C}{\mathbb{C}}

\newcommand{\OH}{\mathcal{O}(\mathbb{H})}
\newcommand{\Hd}{\mathcal{H}_{\diamond}}

\newcommand{\A}{A}
\newcommand{\K}{\mathsf{k}}
\newcommand{\KA}{\K\langle \A\rangle}

\newcommand{\G}{\mathbb{G}}

\newcommand{\calP}{\mathcal{P}}
\newcommand{\X}{\mathcal{X}}
\newcommand{\YT}{\operatorname{YT}}
\newcommand{\SSYT}{\operatorname{SSYT}}
\newcommand{\Sh}{\operatorname{sh}}
\newcommand{\Y}{\K\boxed{\hspace{-1pt} A}}
\newcommand{\Yd}{\Y_{\diamond}}

\newcommand{\con}{\operatorname{SC}}

\newcommand{\Ydiag}{\K\boxed{A}^{\operatorname{diag}}}

\definecolor{mycolor}{RGB}{194, 8, 88}

\definecolor{mycolor}{RGB}{194, 8, 88}
\newcommand{\todo}[1]{\message{LaTeX Warning: You did not finish your work :-( on input line \the\inputlineno} {\color{mycolor} {\big[\,}{\bf Todo:} #1\,\big]}}

\makeatletter
\newcommand{\cop}{\DOTSB\cop@\slimits@}
\newcommand{\cop@}{\mathop{\bigstar}}
\newcommand{\cop@@}[2]{%
  \vphantom{\sum}%
  \ifx#1\displaystyle\big#2\else#2\fi
}
\makeatother

\title[The Young Tableaux Hopf algebra and multiple Schur series]{The Young Tableaux Hopf algebra \\and multiple Schur series}

\author{Jinbo Yu}
\address{Graduate School of Mathematics,  Nagoya University, Nagoya, Japan.}
\email{jinbo.yu.e6@math.nagoya-u.ac.jp}

\subjclass[2020]{Primary 
11M32; 
16T05; 
16T30; 
Secondary 
05E05; 
11F11 
}
\keywords{Multiple zeta values, Multiple Eisenstein series, Hopf algebra, modular forms, Schur function}

\usepackage[nameinlink]{cleveref}
\crefname{theorem}{Theorem}{Theorems}
\crefname{proposition}{Proposition}{Propositions}
\crefname{lemma}{Lemma}{Lemmas}
\crefname{corollary}{Corollary}{Corollaries}
\crefname{conjecture}{Conjecture}{Conjectures}
\crefname{question}{Question}{Questions}
\crefname{definition}{Definition}{Definitions}
\crefname{example}{Example}{Examples}
\crefname{remark}{Remark}{Remarks}
\crefname{lemdef}{Definition/Lemma}{Definitions/Lemmas}
\crefname{propdef}{Definition/Proposition}{Definitions/Propositions}
\crefname{thmA}{Main Theorem}{Main Theorems}
\crefname{section}{Section}{Sections}
\Crefname{section}{Section}{Sections}
\crefname{subsection}{Section}{Sections}
\Crefname{subsection}{Section}{Sections}

\begin{document}
\date{\today}

\begin{abstract}
In this paper, we introduce multiple Schur series, which are defined by Schur-type sums over semi-standard Young tableaux and generalize both Schur multiple zeta values and multiple Eisenstein series. To study their algebraic structure, we construct a connected, commutative, graded Hopf algebra of Young tableaux and identify its linearized quotient with the quasi-shuffle algebra. Within this Hopf algebra and its quotient, we establish several relations, including a hook formula and the Jacobi--Trudi formula. Furthermore, we relate this Hopf algebra to the ring of symmetric functions, which yields polynomial reduction formulas for tableaux with constant entries. As applications, we recover Schur multiple zeta values, introduce Schur multiple Eisenstein series together with a $q$-analogue of Schur multiple zeta values, and discuss their (quasi)modularity.
\end{abstract}

\maketitle

\section{Introduction}

The purpose of this paper is to study the algebraic structure of Young tableaux and several of its realizations. To study these realizations, we introduce \emph{multiple Schur series}. Multiple Schur series generalize multiple zeta values and multiple Eisenstein series by replacing the index with a Young tableau and replacing the linear order on the summation variables by the semi-standard condition. A prominent example of this framework is given by the Schur multiple zeta values, defined and studied by M.~Nakasuji, O.~Phuksuwan, and Y.~Yamasaki \cite{NPY}. A common algebraic setup for multiple zeta values and multiple Eisenstein series \cite{GKZ,B} is the quasi-shuffle algebra introduced by M.~E.~Hoffman and K.~Ihara \cite{H,HI}.  However, for Schur multiple zeta values, no such framework has been proposed before.

In this paper, we construct a Hopf algebra structure on Young tableaux (\cref{mainthmA:1}) that incorporates and extends the properties of the quasi-shuffle algebra. We establish new relations in this Young tableaux Hopf algebra and extend several results of \cite{NPY}, including the Jacobi--Trudi formula. The key point is that the maps involved are Hopf algebra homomorphisms (\cref{Thm:Main-L-diamond-Hook,Thm:Main-L-diamond}). Next, we introduce multiple Schur series as a generalization of Schur multiple zeta values and as a realization of the Young tableaux Hopf algebra. Starting with a family of algebra homomorphisms indexed by a totally ordered set, we use convolution to construct an algebra homomorphism from the Young tableaux algebra to any commutative $\K$-algebra (\cref{Thm:Main-MSS}). The same construction gives a connection with the ring of symmetric functions. It also yields an explicit injective embedding of this ring into the Young tableaux algebra. As a main application, we introduce the Schur multiple Eisenstein series. They are Schur analogues of the multiple Eisenstein series of H.~Gangl, M.~Kaneko, and D.~Zagier \cite{GKZ,B}. We also prove their modularity properties. A detailed study is carried out in \cite{BY,Yu}.

\subsection{Schur multiple zeta values}\label{sec:SNZV}

For $r\geq1$ and $(h_1,\dots,h_r)\in \N^r$ with $h_r\geq2$, the multiple zeta values and multiple zeta-star values are defined by
\begin{equation}\label{eq:defMZV&MZSV}
    \zeta(h_1,\dots,h_r)=\sum_{0<n_1<\cdots<n_r}\frac{1}{n_1^{h_1}\cdots n_r^{h_r}},
    \qquad
    \zeta^{\star}(h_1,\dots,h_r)=\sum_{0<n_1\leq\cdots\leq n_r}\frac{1}{n_1^{h_1}\cdots n_r^{h_r}}.
\end{equation}
Both series converge absolutely. The two families determine each other by inclusion-exclusion. For example,
\begin{equation}\label{eq:zeta&zetas}
    \zeta^{\star}(h_1,h_2)=\zeta(h_1,h_2)+\zeta(h_1+h_2),
    \qquad
    \zeta(h_1,h_2)=\zeta^{\star}(h_1,h_2)-\zeta^{\star}(h_1+h_2).
\end{equation}
In general, each multiple zeta-star value is a $\Z$-linear combination of multiple zeta values, and conversely. The study of algebraic relations reduces to either family, and current research mainly centers around multiple zeta values.  K.~Ihara, M.~Kaneko, and D.~Zagier~\cite{IKZ} conjectured that the \emph{extended double shuffle relations} generate all algebraic relations among multiple zeta values. These relations originate from two distinct product structures defined on the same underlying space.

M.~E.~Hoffman~\cite{H} formalized this structure via \emph{quasi-shuffle algebras}. Fix a commutative associative product $\diamond$ on the $\K$-vector space $\K\A$ generated by an alphabet $\A$. The quasi-shuffle product $\ast_\diamond$ in the free algebra $\KA$ is determined by $\mathbf{1}\ast_\diamond w=w$ and
\begin{align*}
    aw \ast_\diamond bv
    = a(w \ast_\diamond bv) + b(aw \ast_\diamond v) + (a\diamond b)(w \ast_\diamond v).
\end{align*}
Different choices of $\diamond$ recover the harmonic and shuffle relations.

Take $\A=\{z_h\mid h\geq1\}$ and  set $z_{h_1}\diamond z_{h_2}=z_{h_1+h_2}$. This gives the harmonic product. The assignment $z_{h_1}\cdots z_{h_r}\mapsto\zeta^{\ast}(h_1,\dots,h_r)$ is an algebra homomorphism and encodes the harmonic relations. Here $\zeta^{\ast}$ is the harmonic regularization of $\zeta$ for $h_r=1$. M.~E.~Hoffman proved that $(\KA,\ast_\diamond,\Delta_{\mathrm{dec}})$ is a Hopf algebra, where $\Delta_{\mathrm{dec}}$ denotes the deconcatenation coproduct. M.~E.~Hoffman and K.~Ihara revisited this structure in~\cite{HI}. They obtained an explicit exponential identity that expresses each power of a single letter as a polynomial in its $\diamond$-powers. \cref{sec:qsa} reviews these constructions in detail.

M.~Nakasuji, O.~Phuksuwan, and Y.~Yamasaki~\cite{NPY} introduced \emph{Schur multiple zeta values}. The values generalize both families given in \eqref{eq:defMZV&MZSV}. The index $(h_1,\dots,h_r)$ is replaced by a skew Young tableau. The linear order on the summation variables is replaced by the semi-standard condition. For a skew Young tableau $\mathbf{h}=(h_{i,j})$ of shape $\lambda/\mu$, define
\begin{align*}
    \zeta(\mathbf{h})
    = \sum_{(m_{i,j})\,\in\,\SSYT(\lambda/\mu,\,\N)}
      \prod_{(i,j)\in D(\lambda/\mu)} \frac{1}{m_{i,j}^{h_{i,j}}},
\end{align*}
where the sum ranges over all semi-standard Young tableaux of shape $\lambda/\mu$ with entries in $\N$. The semi-standard condition imposes $m_{i,j}\leq m_{i,j+1}$ along rows and $m_{i,j}<m_{i+1,j}$ along columns. The series converges absolutely when $h_{i,j}\geq2$ at every corner of $\lambda/\mu$ and $h_{i,j}\geq1$ at all other boxes. A single-column tableau recovers $\zeta(h_1,\dots,h_r)$. A single-row tableau recovers $\zeta^\star(h_1,\dots,h_r)$. Schur multiple zeta values thus unify these two within one combinatorial object.

In \cite{NPY}, the \emph{Jacobi--Trudi formula} for Schur multiple zeta values was proved. A Young tableau $\mathbf{h}=(h_{i,j})$ is called \emph{diagonal constant} if $h_{i,j}=a_{j-i}$ depends only on the diagonal $j-i$. Here $a_n$ $(n\in\Z)$ denotes the common entry on the $n$-th diagonal. For a diagonal constant Young tableau $\mathbf{h}$ of shape $\lambda/\mu$, the formula reads
\begin{align*}
    \zeta(\mathbf{h})
    = \det\bigl[\zeta(a_{-\mu'_j+j-1},\dots,a_{-\mu'_j+j-(\lambda'_i-\mu'_j-i+j)})\bigr]_{1\leq i,j\leq \lambda_1},
\end{align*}
where $\lambda'=(\lambda_1',\dots,\lambda_{\lambda_1}')$ and $\mu'=(\mu_1',\dots,\mu_{\mu_1}')$ are the conjugates of the partitions $\lambda$ and $\mu$.
It expresses $\zeta(\mathbf{h})$ as a determinant of ordinary multiple zeta values and mirrors the classical Jacobi--Trudi identity for Schur polynomials. H.~Bachmann and S.~Charlton~\cite{BC} extended this circle of ideas to generalized determinant evaluations.

One goal of our algebraic framework is to provide a Hopf algebraic interpretation of the Jacobi--Trudi formulas. 

\subsection{Young tableaux Hopf algebra}\label{sec:introYTHA}

The central object of this paper is a Hopf algebra structure on Young tableaux. It plays the role for Schur multiple zeta values that the quasi-shuffle Hopf algebra plays for multiple zeta values.

Let $\A$ be a countable alphabet and $\K$ a commutative ring. By $\Y$ we denote the commutative $\K$-algebra generated by connected skew Young tableaux with entries in $\A$, modulo the relation that identifies a disconnected tableau with the product of its connected components. We will make this definition precise in \eqref{eq:defKA}. Further, we allow the Young tableaux to have entries in $\K\A$ and then extend them $\K$-linearly in each entry. For example, for $a,b \in \A$, $\alpha,\beta \in \K$, and $X=\alpha a+ \beta b$, we write
\begin{align}
    \begin{ytableau}
        X &c\\
        d
    \end{ytableau} =   \alpha\,\,   \begin{ytableau}
        a&c\\
        d
    \end{ytableau}+   \beta \,\,  \begin{ytableau}
        b&c\\
        d
    \end{ytableau}.
\end{align}

The product $\ast$ places two tableaux as disconnected components of a single shape. We equip $\Y$ with the \emph{cutting coproduct} $\Delta_{cut}$ (cf.~\cref{sec:YTalgsetup}). It cuts a tableau $w$ of shape $\lambda/\mu$ into an upper-left part and a lower-right part along each intermediate shape. For example,
\begin{align*}\ytableausetup{centertableaux,boxsize=1.3em}
    \Delta_{cut}\left(\begin{ytableau}
        a&b\\
        c
    \end{ytableau}\right)
    = 1\otimes\begin{ytableau}
        a&b \\ c
    \end{ytableau}
    +\begin{ytableau}
        a
    \end{ytableau} \otimes \begin{ytableau}
        \none & b\\ c
    \end{ytableau}
    + \begin{ytableau}
        a&b 
    \end{ytableau}\otimes \begin{ytableau}
        c
    \end{ytableau}
    + \begin{ytableau}
        a \\ c
    \end{ytableau}\otimes \begin{ytableau}
        b
    \end{ytableau}
    +\begin{ytableau}
        a&b\\ c
    \end{ytableau}\otimes1.
\end{align*}

To connect $\Y$ with the quasi-shuffle algebra $\KA$, we recall how a Schur multiple zeta value expands into multiple zeta values. The semi-standard condition only constrains the relative order of the summation variables. We compare all orderings compatible with this condition. This expands a Schur multiple zeta value as a sum of multiple zeta values. Consider the tableau
\begin{align}\label{eq:tableau21}
    \mathbf{h}=\ytableausetup{centertableaux,boxsize=1.3em}
    \begin{ytableau}
      a & b \\ c
    \end{ytableau}
\end{align}
of shape $(2,1)$, whose entries $a,b,c$ stand for exponents. The semi-standard condition requires $m_a\leq m_b$ along the row and $m_a<m_c$ along the column. Enumerating the orderings of $m_a,m_b,m_c$ compatible with these constraints gives
\begin{align}\label{eq:SMZVtoMZV}
\zeta\!\left(\begin{ytableau} a & b \\ c \end{ytableau}\right)
    = \zeta(a+b,\,c)
    + \zeta(a,\,b+c)
    + \zeta(a,\,b,\,c)
    + \zeta(a,\,c,\,b).
\end{align}

We formalize this procedure through the \emph{linearization map} $L_{\diamond}$. A \emph{semi-standard decomposition} of $\mathbf{h}$ is a way of cutting its boxes into ordered groups $D_1,\dots,D_r$ (cf.~\cref{sec:Lmap}, \cref{def:SSD}). Filling each box of $D_a$ with the number $a$ produces a semi-standard tableau: the entries weakly increase along rows and strictly increase along columns. Each decomposition records one ordering of the boxes compatible with the semi-standard condition. Writing $\operatorname{SSD}(\Sh(\mathbf{h}))$ for the set of all such decompositions and $|\mathbf{h}_{D_a}|_\diamond$ for the $\diamond$-product of the entries in $D_a$, we define
\begin{align*}
    L_\diamond:\Y\longrightarrow\KA,
    \qquad
    L_\diamond(\mathbf{h})
    = \sum_{(D_1,\dots,D_r)\in \operatorname{SSD}(\Sh(\mathbf{h}))}|\mathbf{h}_{D_1}|_\diamond\cdots|\mathbf{h}_{D_r}|_\diamond.
\end{align*}
For the tableau $\mathbf{h}$ given in \eqref{eq:tableau21}, the four semi-standard decompositions are represented by the labeled tableaux below. The label $a$ marks the boxes of $D_a$:
\begin{align*}
    \begin{ytableau} 1 & 1 \\ 2 \end{ytableau},\quad
    \begin{ytableau} 1 & 2 \\ 2 \end{ytableau},\quad
    \begin{ytableau} 1 & 2 \\ 3 \end{ytableau},\quad
    \begin{ytableau} 1 & 3 \\ 2 \end{ytableau},
\end{align*}
and therefore
\begin{align*}
    L_\diamond(\mathbf{h})
    = (a\diamond b)\,c
      + a\,(b\diamond c)
      + a\,b\,c
      + a\,c\,b.
\end{align*}
In the classical setting $\A=\N$ and $a\diamond b=a+b$, this recovers the expansion of $\zeta(\mathbf{h})$ in \eqref{eq:SMZVtoMZV}. The map $L_\diamond$ thus promotes the case-by-case linearization of Schur multiple zeta values to an algebra homomorphism valid for every product $\diamond$. It is surjective, and the quotient $$\Yd:=\Y/\ker(L_\diamond)$$ carries the induced structure.

The first main theorem establishes the Hopf algebra structure and identifies $\Yd$ with the quasi-shuffle Hopf algebra.
\begin{thmA}\label{mainthmA:1}
The following holds.
\begin{enumerate}
    \item[\textnormal{(1)}]
    $(\Y,\ast,\Delta_{cut},S)$ is a connected commutative graded Hopf algebra.
    \item[\textnormal{(2)}]
    $L_\diamond$ induces an isomorphism of Hopf algebras
    \begin{align*}
        (\Yd,\ast,\Delta_{cut})\cong(\KA,\ast_\diamond,\Delta_{\mathrm{dec}}).
    \end{align*}
\end{enumerate}
\end{thmA}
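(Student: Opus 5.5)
For part (1), I will first make precise that $\Y$ is the polynomial algebra on connected skew tableaux: a skew tableau of shape $\lambda/\mu$ is identified with the product of its connected components. Then $\ast$ is commutative and associative by construction. The grading is by the number of boxes; degree $0$ is spanned by the empty tableau, so $\Y$ is connected.

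Next I define $\Delta_{cut}$ on an arbitrary skew tableau $w$ of shape $\lambda/\mu$ by
\begin{align*}
\Delta_{cut}(w)=\sum_{\mu\subseteq\nu\subseteq\lambda} w|_{\nu/\mu}\otimes w|_{\lambda/\nu}.
\end{align*}
I must check that this is compatible with the identification of disconnected tableaux with products. The key combinatorial point concerns a shape that is a disjoint union of pieces lying in separated row and column ranges. For such a shape, an intermediate partition $\nu$ corresponds bijectively to a choice of intermediate shape in each component, since the order ideals of a disjoint union of posets factor. Hence $\Delta_{cut}$ is well defined and multiplicative: $\Delta_{cut}(u\ast v)=\Delta_{cut}(u)\ast\Delta_{cut}(v)$.

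Coassociativity follows because both iterates sum over chains $\mu\subseteq\nu_1\subseteq\nu_2\subseteq\lambda$. The counit is the projection to degree $0$. A connected graded bialgebra is automatically a Hopf algebra, with the antipode $S$ given by the usual recursion (Takeuchi). The main care in this part is the well-definedness under translations of components: the cutting of a skew shape must not depend on its embedding, up to the equivalence defining $\Y$.

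For part (2), I view a semi-standard decomposition as a chain of shapes
\begin{align*}
\mu=\nu_0\subsetneq\nu_1\subsetneq\cdots\subsetneq\nu_r=\lambda,
\end{align*}
where each $\nu_a/\nu_{a-1}$ is a horizontal strip.

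Coalgebra compatibility is then immediate. Deconcatenating $L_\diamond(w)$ after position $k$ corresponds to cutting at $\nu=\nu_k$. Conversely, an SSD of $\nu/\mu$ and an SSD of $\lambda/\nu$ concatenate uniquely to an SSD of $\lambda/\mu$. Hence $(L_\diamond\otimes L_\diamond)\Delta_{cut}=\Delta_{\mathrm{dec}}L_\diamond$.

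Multiplicativity is the main obstacle. I need $L_\diamond(u\ast v)=L_\diamond(u)\ast_\diamond L_\diamond(v)$. The plan is to pass to semi-standard labelings. An SSD of the disjoint union is a surjective labeling $\{\text{boxes}\}\to\{1,\dots,r\}$ that is semi-standard on each component. Its restrictions to $u$ and $v$ give, after standardizing the label sets, SSDs of $u$ and $v$ together with an order-preserving surjection merging the two label sets. These merges are exactly the quasi-shuffles: a shared label gives the $\diamond$-term, and separate labels give the shuffle terms. The expansion of $\ast_\diamond$ as a sum over surjections $[p]\sqcup[q]\to[r]$, monotone on each part, then matches term by term. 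Associativity and commutativity of $\diamond$ ensure that $|\cdot|_\diamond$ of a merged group is the $\diamond$-product of its parts.

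Surjectivity is clear, since a single-column tableau $a_1\ldots a_r$ (top to bottom) has the unique SSD in which every box gets its own label. This gives $L_\diamond(\text{column})=a_1\cdots a_r$. Hence $L_\diamond$ induces an isomorphism $\Yd\cong\KA$ of algebras and coalgebras. Since $\ker L_\diamond$ is a Hopf ideal, being the kernel of a bialgebra map, $\Yd$ carries the induced structure and the isomorphism is one of Hopf algebras.
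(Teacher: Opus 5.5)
Your proposal is correct and follows the paper's proof essentially step by step. In part (1) you factor intermediate shapes over disjoint components, and in part (2) you split the chain to get $(L_\diamond\otimes L_\diamond)\Delta_{cut}=\Delta_{\mathrm{dec}}L_\diamond$, match interleavings of decompositions (maps strictly increasing on each part and jointly covering $\{1,\dots,p\}$) with quasi-shuffle terms, and use a single-column tableau for surjectivity. The one refinement in the paper is in descending $\Delta_{cut}$ to $\Yd$. Since $\K$ is an arbitrary commutative ring, the paper uses directly that $\overline L_\diamond\otimes\overline L_\diamond$ is invertible, rather than the general fact that kernels of bialgebra maps are coideals, and this avoids any flatness issue.
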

$\Y$ is graded by the number of boxes. The antipode $S$ can be computed by a recursive formula (cf.~\eqref{eq:antipode}). For example,
\begin{equation*}
    S\!\left(\begin{ytableau} a & b \\ c \end{ytableau}\right)=
    -\,\begin{ytableau} a & b \\ c \end{ytableau}
    +\begin{ytableau} a & b \end{ytableau}
    \ast\!\begin{ytableau}  c \end{ytableau}
    +\begin{ytableau} a  \\ c \end{ytableau}
    \ast\!\begin{ytableau}  b \end{ytableau}
    -\begin{ytableau} a  \end{ytableau}
    \ast\!\begin{ytableau} b  \end{ytableau}
    \ast\!\begin{ytableau} c  \end{ytableau}.
\end{equation*}
The isomorphism in part~(2) transports every relation in the quasi-shuffle Hopf algebra to a relation among Young tableaux, and conversely.

\subsection{The Hopf subalgebra of diagonal constant Young tableaux}
Let $\Ydiag$ be the subalgebra of $\Y$ generated by the connected \emph{diagonal constant} Young tableaux, in which all boxes on a common diagonal have the same entry. Equivalently, $\Ydiag$ is spanned by the diagonal constant Young tableaux. It is closed under the coproduct and is therefore a Hopf subalgebra of $\Y$. 

The second and third main theorems concern the subalgebra $\Ydiag$. 
The first identity expands a single $\diamond$-power $h^{\diamond l}$, which corresponds to a depth-one value, as an alternating sum of hook-shaped tableaux. This allows us to write a Riemann zeta value as a combination of Schur multiple zeta values with constant entries. We denote by $\{h\}^{\lambda/\mu}$ the Young tableau of shape $\lambda/\mu$ with all entries equal to $h \in A$ and we write $h^{\diamond n} = \underbrace{h \diamond \dots \diamond h}_n$.

\begin{thmA}\label{Thm:Main-L-diamond-Hook}
    For $h\in\A$ and $l\in\Z_{\geq 1}$, we have
    \begin{align*}
        L_\diamond\!\left(\,\{h^{\diamond l}\}^{(1)}\,\right)
        =\sum_{\substack{a+b=l,\\a\geq1,\,b\geq0}}(-1)^b
         L_\diamond\!\left(\{h\}^{(a,\{1\}^b)}\right).
\end{align*}
\end{thmA}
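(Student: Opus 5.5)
We plan to compute $L_\diamond$ of each hook tableau $\{h\}^{(a,\{1\}^b)}$ explicitly in $\KA$ and then observe a telescoping in the alternating sum. A hook of shape $(a,1^b)$ has a corner box $(1,1)$, an arm of $a-1$ boxes to its right and a leg of $b$ boxes below it. In any semi-standard decomposition $(D_1,\dots,D_r)$ the corner must lie in $D_1$, since it is the minimal box in both its row and its column. The arm boxes take weakly increasing labels along the row. The leg boxes take strictly increasing labels, all larger than the corner's label. Because all entries equal $h$, each factor $|\mathbf{h}_{D_k}|_\diamond$ equals $h^{\diamond |D_k|}$. Thus $L_\diamond(\{h\}^{(a,\{1\}^b)})$ is a sum of words $h^{\diamond c_1}\cdots h^{\diamond c_r}$, where $c_1\ge1$ counts the corner block together with arm boxes in $D_1$, every $c_k\ge1$, and each later block contains at most one leg box.

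Next we organize the sum by the word $w=h^{\diamond c_1}\cdots h^{\diamond c_r}$ with $\sum c_k=l$. Fix $w$ and a choice of $b$ (the number of leg boxes, which satisfies $b\le r-1$). A decomposition producing $w$ is determined by which $b$ of the blocks $2,\dots,r$ contain a leg box. Given that choice, the arm boxes are forced: they fill the remaining slots in order, with $c_1-1$ of them in block $1$ and $c_k$ or $c_k-1$ in block $k$. We must make sure every block is nonempty, which is automatic since $c_k\ge1$. So the coefficient of $w$ in $L_\diamond(\{h\}^{(a,\{1\}^b)})$ is exactly $\binom{r-1}{b}$, where $a=l-b$.

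Summing with signs, the coefficient of $w$ on the right-hand side is $\sum_{b=0}^{r-1}(-1)^b\binom{r-1}{b}$. This equals $1$ if $r=1$ and $0$ otherwise. The constraint $a\ge1$ is automatic, because $b\le r-1\le l-1$. The only surviving word is $h^{\diamond l}$, which equals $L_\diamond(\{h^{\diamond l}\}^{(1)})$. This proves the identity.

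The step needing care is the bijection claim: a decomposition must be uniquely determined by the leg-set choice, and every choice must be realizable. This reduces to checking that labels are semi-standard, meaning the arm is weakly increasing, the leg is strictly increasing, and the leg labels exceed the corner's label $1$. Nonemptiness and the requirement that the labels $1,\dots,r$ all be used are guaranteed because each $c_k\ge1$. Here the legs start in block $\ge2$, so a block $k\ge2$ containing a leg box holds $c_k-1\ge0$ arm boxes, which is fine.

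As an alternative route, one could use \cref{mainthmA:1}. There $h^{\diamond l}$ appears as a primitive-type element and the alternating hook sum resembles an antipode or logarithm expression. The direct counting above seems simplest, so we would follow it.
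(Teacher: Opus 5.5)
Your proposal is correct and follows the paper's proof essentially verbatim. The corner is forced into $D_1$. For a fixed composition $(c_1,\dots,c_r)$ of $l$, the decompositions of the hook with leg length $b$ are in bijection with $b$-subsets of $\{2,\dots,r\}$. The alternating sum $\sum_b(-1)^b\binom{r-1}{b}=(1-1)^{r-1}$ then kills everything except $r=1$.

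One point of phrasing: group the terms by the composition $(c_1,\dots,c_r)$, not by the resulting element $w\in\KA$. Distinct compositions can give equal elements if the $h^{\diamond c}$ are linearly dependent. Grouping by composition is exactly how the paper organizes the sum.
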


In \cref{sec:JTFY} we define the \emph{Jacobi--Trudi function} $J:\Ydiag\to\Ydiag$. Its determinant replaces each entry of the classical Jacobi--Trudi matrix with the corresponding column tableau. We show that $J$ acts trivially modulo $\ker L_\diamond$. This allows us to write a diagonal Schur multiple zeta value as a determinant of multiple zeta values. 

\begin{thmA}\label{Thm:Main-L-diamond}
The Jacobi--Trudi function $J$ satisfies the following.
\begin{enumerate}[(1)]
    \item
    $J:\Ydiag\to\Ydiag$ is a Hopf algebra homomorphism
    \item For any $\mathbf{h}\in\Ydiag$ we have
    \begin{align*}
        J(\mathbf{h})-\mathbf{h}\in\ker L_\diamond.
    \end{align*}
    Equivalently, $L_\diamond(\mathbf{h})$ equals the Jacobi--Trudi determinant evaluated in $(\KA,\ast_\diamond)$.
\end{enumerate}
\end{thmA}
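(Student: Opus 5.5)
We first fix how $J$ is defined. On a connected diagonal constant tableau $\mathbf{h}$ of shape $\lambda/\mu$ with diagonal entries $(a_n)_{n\in\Z}$, set
\begin{align*}
J(\mathbf{h})=\det\bigl[\,C_{i,j}\,\bigr]_{1\le i,j\le\lambda_1},
\end{align*}
where $C_{i,j}$ is the single-column tableau with entries $a_{-\mu'_j+j-1},\dots,a_{-\mu'_j+j-(\lambda'_i-\mu'_j-i+j)}$. We use the conventions $C_{i,j}=1$ for length $0$ and $C_{i,j}=0$ for negative length, and the determinant is expanded with the product $\ast$. We extend $J$ multiplicatively over connected components and $\K$-linearly. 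Since $\Ydiag$ is the polynomial algebra on connected diagonal constant tableaux, this defines an algebra homomorphism. For a disconnected shape, the Jacobi--Trudi matrix is block triangular with unit corners, so the determinant formula agrees with the product over components. This shows that $J$ is well defined on all diagonal constant tableaux and is multiplicative.

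For part~(2), we argue through the symmetric-function picture. Since $L_\diamond$ is an algebra homomorphism to $(\KA,\ast_\diamond)$, we must show $L_\diamond(\mathbf{h})=\det[L_\diamond(C_{i,j})]$, with $L_\diamond(C_{i,j})$ the word of column entries. We mimic the Lindström--Gessel--Viennot proof of Jacobi--Trudi (the $e$-version, columns). Expanding the determinant in $\KA$, each product of column words under $\ast_\diamond$ is a sum over interleavings with merges, i.e.\ over families of labelled columns $(D_1,\dots,D_r)$-compatible orderings. Think of each column tableau $C_{i,\sigma(i)}$ together with a strict chain of labels as a lattice path. Then $L_\diamond(\mathbf{h})$ corresponds to the non-intersecting families, and these are exactly the semi-standard decompositions of $\lambda/\mu$, read column by column.

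The main obstacle is constructing the sign-reversing involution on intersecting families. The involution must preserve the monomial $|\mathbf{h}_{D_1}|_\diamond\cdots|\mathbf{h}_{D_r}|_\diamond$, including which letters are $\diamond$-merged. Diagonal constancy is what makes this work: swapping tails of two paths at their first intersection exchanges boxes lying on the same diagonals, hence carrying the same letters. The merged groups $D_a$ are therefore unchanged as multisets. We would first check this on hooks, which should recover \cref{Thm:Main-L-diamond-Hook}, and on shape $(2,2)$, before writing the general tail-swap argument.

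For part~(1), compatibility with the counit and grading is immediate, so it remains to show $\Delta_{cut}\circ J=(J\otimes J)\circ\Delta_{cut}$ on connected generators. Our plan is to avoid a direct combinatorial computation by working through $L_\diamond$. By \cref{mainthmA:1}, $L_\diamond$ is a Hopf map. By part~(2), $J$ is the identity modulo $\ker L_\diamond$, so both sides agree after applying $L_\diamond\otimes L_\diamond$. This is not enough for a genuine identity in $\Ydiag$. Instead, we prove the identity for generic letters via the symmetric function embedding. Working in the universal case where each diagonal carries a distinct formal letter, $J$ corresponds to the classical Jacobi--Trudi identity, and cutting corresponds to the skew coproduct $\Delta s_{\lambda/\mu}=\sum_\nu s_{\lambda/\nu}\otimes s_{\nu/\mu}$. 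The identity $\det$-of-skew-coproduct then follows from the Cauchy--Binet expansion of minors of the Jacobi--Trudi matrix. Finally, we check that $J$ commutes with the antipode, which is automatic for a bialgebra map between Hopf algebras.
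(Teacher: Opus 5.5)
\textbf{Part (2).} Your plan here takes a different route from the paper's, and it works. The paper proves~(1) first and deduces~(2) from it. By induction on the number of boxes and \cref{prop:L-Delta}, $L_\diamond(J(\mathbf h))-L_\diamond(\mathbf h)$ is primitive in $(\KA,\Delta_{dec})$, hence concentrated in word length one, where the two sides are compared by hand. Your Lindstr\"om--Gessel--Viennot argument proves~(2) without~(1):
\begin{itemize}
\item A term of $\det[L_\diamond(C_{i,j})]$ is a permutation together with a labelling of the boxes of the column factors onto $\{1,\dots,r\}$, strictly increasing down each factor.
\item Draw a box on diagonal $d$ with label $t$ as a step from $(-d,t-1)$ to $(-d+1,t)$, with vertical steps in between. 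Tail swaps then preserve the multiset of (diagonal, label) pairs, and hence the word.
\item The non-intersecting families are exactly those with $\sigma=\operatorname{id}$ and weakly increasing rows, i.e.\ the semi-standard decompositions of \cref{lem:SSD}.
\end{itemize}
This makes~(2) independent of the Hopf structure, whereas the paper gets~(2) almost for free once~(1) is known. The involution still has to be written out, but nothing in it fails.

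\textbf{Part (1): the gap.} The symmetric-function step cannot produce an identity in $\Ydiag\otimes\Ydiag$ once different diagonals carry different letters.
\begin{itemize}
\item The only embedding available, $\iota_a$, is for a single repeated letter and lands in $\Yd$. Transporting $\Delta s_{\lambda/\mu}$ through it yields the identity only modulo $\ker L_\diamond$, which you yourself observed is not enough.
\item What symmetric functions do give is the constant case. The bialgebra map $\Lambda_{\K}\to\Ydiag$, $e_k\mapsto\{a\}^{(1^k)}$, sends $s_{\lambda/\mu}$ to $J(\{a\}^{\lambda/\mu})$ by the dual Jacobi--Trudi identity. Pushing forward $\Delta s_{\lambda/\mu}=\sum_\eta s_{\eta/\mu}\otimes s_{\lambda/\eta}$ then proves~(1) for constant-entry tableaux.
\item In general this cannot work, because $\Lambda$ is cocommutative while $\Delta_{cut}$ is not. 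Already for a two-box column with distinct letters, $\Delta_{cut}$ is not flip-symmetric, so no bialgebra map out of $\Lambda$ can have such a $J(\mathbf h)$ in its image.
\item The universal case specializes to the constant case, not conversely.
\item Your formula also puts the tensor factors in the opposite order to $\Delta_{cut}$; $\Lambda$ cannot see this, but $\Ydiag$ does.
\end{itemize}

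\textbf{What is needed instead.} You need the Cauchy--Binet computation done directly in $\Y\otimes\Y$. Set $\alpha_j=-\mu'_j+j-1$, $\beta_i=-\lambda'_i+i$ and $C_{i,j}=\mathsf C[\alpha_j,\beta_i]$ (the column $a_{\alpha_j},\dots,a_{\beta_i}$). Since $\Delta_{cut}$ is an algebra map into a commutative ring, $\Delta_{cut}J(\mathbf h)=\det[\Delta_{cut}C_{i,j}]$. Moreover $\Delta_{cut}C_{i,j}=\sum_\gamma(\mathsf C[\alpha_j,\gamma+1]\otimes\mathbf 1)(\mathbf 1\otimes\mathsf C[\gamma,\beta_i])$ over a finite window of $\gamma$. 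So $[\Delta_{cut}C_{i,j}]$ is a product of an $s\times N$ and an $N\times s$ matrix, and Cauchy--Binet gives $\Delta_{cut}J(\mathbf h)=\sum_D\det\bigl[\mathsf C[\alpha_j,d_r+1]\bigr]_{r,j}\otimes\det\bigl[\mathsf C[d_r,\beta_i]\bigr]_{i,r}$ over $D=\{d_1<\dots<d_s\}$.

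The substance your sketch omits is:
\begin{enumerate}
\item[(i)] The first minor vanishes if $d_k>\alpha_k$, and the second if $d_k<\beta_k-1$, for some $k$. This holds because the monotonicity of $\alpha$, $\beta$ and $d$ forces a zero block.
\item[(ii)] The surviving $D$ correspond bijectively to $\mu\subseteq\eta\subseteq\lambda$ via $\eta'_r=r-1-d_r$.
\item[(iii)] The two minors equal $J(\mathbf h_{\eta/\mu})$ and $J(\mathbf h_{\lambda/\eta})$. The first holds only after stripping an upper unitriangular block when $\eta_1<\lambda_1$. With your multiplicative definition of $J$, both also need the block-triangular agreement of \cref{rm:JTf}, because the cut pieces are in general disconnected.
\end{enumerate}
This is exactly Steps~1--3 of the paper's proof; its Step~2 involution is a proof of Cauchy--Binet. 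With this in place, your two parts become independent proofs.
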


Part~(2) states that $J$ acts trivially modulo $\ker L_\diamond$, so the Jacobi--Trudi identity holds in $\Yd$. Applying any algebra homomorphism out of $\Yd$ then yields the corresponding determinant identity. For the homomorphism $\zeta$, this recovers the Jacobi--Trudi formula of~\cite{NPY}. The proof relies only on the Hopf algebra structure of $\Yd$. The same argument applies to the H-type formula of~\cite{NPY}, to the ribbon variants
of~\cite{BC}, and to the Giambelli expression of~\cite{MN}. Since $\Yd$ is isomorphic to the quasi-shuffle Hopf algebra by \cref{mainthmA:1}, the identity in $\Yd$ is a relation in $(\KA,\ast_\diamond)$. This shows that each of them follows from the harmonic relations alone.

\subsection{Multiple Schur series}\label{sec:introMSS}
The Young tableaux Hopf algebra provides a common source for many concrete arithmetic functions, each obtained as an algebra homomorphism from $\Y$. Let $\calP$ be a commutative $\K$-algebra and $\X=(X,\prec)$ a finite totally ordered set. Given a family $\{f_m\}_{m\in\X}$ of $\K$-linear maps $f_m:\K\A\to\calP$, the associated \emph{multiple Schur series} is the Schur-type sum
\begin{align*}
    F_\X(\mathbf{h})
    = \sum_{(m_{i,j})\in\SSYT(\Sh(\mathbf{h}),\X)}
      \prod_{(i,j)\in D(\Sh(\mathbf{h}))} f_{m_{i,j}}(h_{i,j}),
\end{align*}
indexed by semi-standard Young tableaux with entries in $\X$. The summation order matches that of the Schur multiple zeta values, which is the source of the name. For any family $\{f_m\}$, this sum makes $F_\X$ an algebra homomorphism on $\Y$. If each $f_m$ is a $\diamond$-algebra homomorphism, then $F_\X$ descends to the quotient $\Yd$. This is the case that produces our concrete examples. The third main theorem records this structural property.
\begin{thmA}\label{Thm:Main-MSS}
If, for every $m\in \X$, the map $f_m:(\K\A,\diamond)\to\calP$ is a $\K$-algebra homomorphism, then the multiple Schur series $F_\X$ is a $\K$-algebra homomorphism from $(\Yd,\ast)$ to $\calP$.
\end{thmA}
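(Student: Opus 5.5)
The plan is to factor $F_\X$ through the linearization map: I will show that $F_\X = \Phi_\X\circ L_\diamond$ on $\Y$, where $\Phi_\X:\KA\to\calP$ is the ordinary \emph{multiple harmonic sum}
\begin{align*}
    \Phi_\X(a_1\cdots a_r)=\sum_{m_1\prec m_2\prec\cdots\prec m_r,\ m_i\in\X} f_{m_1}(a_1)\cdots f_{m_r}(a_r),
\end{align*}
extended $\K$-linearly in each letter. Once this is established, $\ker L_\diamond\subseteq\ker F_\X$, so $F_\X$ descends to $\Yd$. By \cref{mainthmA:1}(2), the product $\ast$ on $\Yd$ corresponds to $\ast_\diamond$ on $\KA$. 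It therefore suffices to know that $\Phi_\X$ is an algebra homomorphism $(\KA,\ast_\diamond)\to\calP$.

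\textbf{Step 1: $\Phi_\X$ is a homomorphism.} This is the standard quasi-shuffle argument. I will prove it by induction on total length via the recursion $aw\ast_\diamond bv=a(w\ast_\diamond bv)+b(aw\ast_\diamond v)+(a\diamond b)(w\ast_\diamond v)$. In the product $\Phi_\X(aw)\Phi_\X(bv)$, I will compare the smallest indices $m$ of $a$ and $m'$ of $b$, splitting into the cases $m\prec m'$, $m'\prec m$ and $m=m'$. The case $m=m'$ uses $f_m(a)f_m(b)=f_m(a\diamond b)$, which is exactly where the hypothesis on $f_m$ enters. It is convenient to run the induction with $\X$ replaced by its final segments $\{m''\succ m\}$, or equivalently to induct on last letters with deconcatenation. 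Since $\X$ is finite, all sums are finite and no convergence issues arise.

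\textbf{Step 2: $F_\X=\Phi_\X\circ L_\diamond$ on tableaux.} Take $\mathbf{h}$ of shape $\lambda/\mu$ and $T=(m_{i,j})\in\SSYT(\lambda/\mu,\X)$. Let $v_1\prec\cdots\prec v_r$ be the distinct values occurring in $T$, and set $D_k=\{(i,j): m_{i,j}=v_k\}$. Then $(D_1,\dots,D_r)$ is a semi-standard decomposition, because relabelling $v_k\mapsto k$ preserves the semi-standard conditions. Conversely, a decomposition $(D_1,\dots,D_r)$ together with a strictly increasing sequence $v_1\prec\cdots\prec v_r$ in $\X$ determines a unique such $T$. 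This gives a bijection between $\SSYT(\lambda/\mu,\X)$ and pairs consisting of a decomposition in $\operatorname{SSD}$ and a strictly increasing $r$-tuple in $\X$. Grouping the product $\prod f_{m_{i,j}}(h_{i,j})$ by blocks and using multiplicativity, the boxes of $D_k$ contribute $f_{v_k}(|\mathbf{h}_{D_k}|_\diamond)$. Summing over the increasing tuples gives $\Phi_\X(|\mathbf{h}_{D_1}|_\diamond\cdots|\mathbf{h}_{D_r}|_\diamond)$, and summing over decompositions gives $\Phi_\X(L_\diamond(\mathbf{h}))$.

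Both sides are multilinear in the entries, so the identity extends to tableaux with entries in $\K\A$. I also need compatibility with the relation identifying a disconnected tableau with the product of its components. For $F_\X$ this is immediate, because semi-standard fillings of a disconnected shape are independent products of fillings of the components. This independence also shows that $F_\X$ is well defined and multiplicative on $\Y$ regardless of any hypothesis on the $f_m$. The identity $F_\X=\Phi_\X\circ L_\diamond$ on all of $\Y$ then follows by linearity, using that $L_\diamond$ is an algebra map.

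The main obstacle is the bookkeeping in Step 2. In particular, the bijection must be stated carefully for skew and disconnected shapes, and it relies on the precise definition of $\operatorname{SSD}$ as nonempty blocks whose labelling is semi-standard. This is where the match with the exact statement of \cref{def:SSD} has to be checked.
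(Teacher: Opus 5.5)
Your proposal is correct, and it reaches the statement by a different route from the paper. The paper never writes down a harmonic sum on $\KA$. Instead it defines characters $\widetilde f_m=\rho_{f_m}\circ\overline L_\diamond$ on $\Yd$, which by \cref{rem:ftilde-explicit} vanish except on the empty tableau and on horizontal strips. It then identifies $F_\X$ with the ordered convolution $\widetilde f_{m_1}\star\cdots\star\widetilde f_{m_r}$ taken with respect to $\Delta_{cut}$, since chains of horizontal strips correspond to semi-standard fillings. It concludes because a convolution of characters along a multiplicative coproduct into a commutative algebra is again a character.

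You instead factor $F_\X=\Phi_\X\circ L_\diamond$ through the classical truncated multiple harmonic sum and use the standard quasi-shuffle induction for $\Phi_\X$. Your route needs only the algebra side: \cref{prop:L-alghom} and the SSYT/SSD correspondence of \cref{lem:SSD}, which settles exactly the bookkeeping you flag. It uses no coproduct, so it avoids \cref{prop:L-Delta} and the descent of $\Delta_{cut}$ to $\Yd$. It also gives an explicit column expansion of $F_\X(\mathbf h)$ as a harmonic-sum realization of $L_\diamond(\mathbf h)$, generalizing \eqref{eq:SMZVtoMZV}. The paper's route instead buys the segment decomposition $F_\X=F_1\star\cdots\star F_s$ of \cref{prop.MSSconv}, which is what feeds the Fourier expansion of Schur multiple Eisenstein series.

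The two arguments are really the same one transported along $\overline L_\diamond$. Your $\Phi_\X$ equals $\rho_{f_{m_1}}\star\cdots\star\rho_{f_{m_r}}$ with respect to $\Delta_{dec}$, and $\overline L_\diamond$ intertwines $\Delta_{cut}$ with $\Delta_{dec}$.

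Two small remarks. First, you only need the algebra isomorphism \eqref{eq:isoYd}, not the full Hopf statement of \cref{mainthmA:1}(2). Second, your Step 1 is dispensable. As you observe yourself, $F_\X$ is already multiplicative on $\Y$ for any family (\cref{prop:MSS-on-Y}). So once Step 2 gives $\ker L_\diamond\subseteq\ker F_\X$, the induced map on the quotient $\Yd$ is automatically an algebra homomorphism.
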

The proof rewrites $F_\X$ as a convolution of the single-variable maps $\widetilde{f}_m$ in the Hopf algebra $\Yd$. The homomorphism property follows from the compatibility of the convolution product with $\ast$. We carry this out in \cref{sec:MSSconvolution}.
A single choice of the family $\{f_m\}$ specializes $F_\X$ to a concrete object, and its homomorphism property encodes the product relations of that object. We treat the Schur multiple zeta values, the Schur multiple Eisenstein series, and the $q$-analogues of the Schur multiple zeta values. In \cref{sec:SMLV} we give two representations of the Schur multiple Dirichlet $L$-values. In the first representation, the maps $\{f_m\}$ are not homomorphisms. Hence the series does not descend to $\Yd$ and is genuinely defined by the semi-standard sum. However, we can always find a ``nice'' algebraic setup to ensure the maps $\{f'_m\}$ are homomorphisms. In the second representation, we demonstrate how this can be achieved.
For Schur multiple Eisenstein series, decomposing $F_\X$ along consecutive segments of $\X$ yields a Fourier expansion in terms of Schur multiple zeta values and the Schur sum of monotangent functions. These functions are studied in~\cite{Yu}.

The constant-entry case links the construction to $\Lambda_{\K}$, the ring of symmetric functions over $\K$. Assume that $(\K\A,\diamond)$ is freely generated and choose $f_m(a)=x_m$ for every generator $a$. Passing to the limit gives 
$$\lim_{M\to\infty}F_{\{1,\dots,M\}}(\{a\}^{\lambda/\mu})=s_{\lambda/\mu}.$$ 
This is the skew Schur function. This identity defines an injective map
\begin{align*}
    \iota_a:\Lambda_{\K} &\longrightarrow \Yd,\\
    s_{\lambda/\mu} &\longmapsto \{a\}^{\lambda/\mu},
\end{align*}
which embeds the ring of symmetric functions into the Young tableaux algebra. Through this embedding, we combine \cref{Thm:Main-L-diamond} with the Hoffman--Ihara exponential identity~\cite{HI}. The result expresses every Young tableau with identical entries $a$ as an explicit polynomial in the depth-one tableaux $\{a^{\diamond h}\}^{(1)}$, for ${h\geq1}$. Under $\iota_a$, this is the classical expansion of a Schur function in the power sums $p_h$, where  $\{a^{\diamond h}\}^{(1)}$ corresponds to $p_h$.

This reduction has a direct consequence for Schur multiple Eisenstein series. For a constant entry $h\geq2$, this writes $\G(\{h\}^{\lambda/\mu};\tau)$ as a polynomial in the depth-one series $\G(hl;\tau)$. Hence $\G(\{h\}^{\lambda/\mu};\tau)$ lies in $\Q[\G(hl;\tau)\mid l\geq1]$. When $h=2$, the series $\{\G(\{2\}^{\lambda/\mu};\tau)\}$ span the ring of quasimodular forms. For even $h\geq4$, each $\G(\{h\}^{\lambda/\mu};\tau)$ is a modular form.

\section*{Acknowledgement}
The author would like to thank Henrik Bachmann for his supervision and kind advice. The author was financially supported by JST SPRING, Grant Number JPMJSP2125, and would like to take this opportunity to thank the ``THERS Make New Standards Program for the Next Generation Researchers''.

\section{Algebra setup}
\subsection{Quasi-shuffle algebras}\label{sec:qsa}
Let $\K$ be a commutative ring, and let $\A$ be a countable set of symbols. Set $\A^{\ast}=\{a_1\cdots a_r|r\geq0, a_1,\dots, a_r\in \A\}$. We call its elements \textit{words} in the alphabet $\A$. Let $\K\A$ be the $\K$-vector space with basis $\A$, and $\KA$ the free algebra over $\K$ generated by the alphabet $\A$. Denote by $\mathbf{1}$ the empty word. Let $\diamond$ be a commutative and associative $\K$-bilinear product on $\K\A$. The corresponding \emph{{quasi-shuffle product}} $\ast_\diamond$ on $\KA$ is defined by $\mathbf{1}\ast_\diamond w=w\ast_\diamond\mathbf{1}=w$ and 
\begin{align}
aw\ast_\diamond bv=a(w\ast_\diamond bv)+b(aw\ast_\diamond v)+(a\diamond b)(w\ast_\diamond v)
\label{eq:quasishuffle} 
\end{align}
for arbitrary words $w,v\in\A^{\ast}$ and symbols $a,b\in \A$. 

\begin{lemma}\label{lem:full-merge}
Let $a_1,\dots,a_t\in\A$ be letters. The word-length $1$ part of $a_1\ast_{\diamond}a_2\ast_{\diamond}\cdots\ast_{\diamond}a_t$ equals the element $a_1\diamond a_2\diamond\cdots\diamond a_t\in\K\A$.
\end{lemma}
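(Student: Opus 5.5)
We argue by induction on $t$. For $t=1$ there is nothing to prove. For the inductive step, write $P_t=a_1\ast_\diamond\cdots\ast_\diamond a_t$ and $\pi_1$ for the projection of $\KA$ onto the word-length $1$ component $\K\A$, extended $\K$-linearly (the coefficients of $\diamond$-products may be arbitrary elements of $\K\A$, so we work with $\K$-linear combinations throughout).

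The first observation we need is that $\ast_\diamond$ never decreases length too much. If $w$ and $v$ are words of lengths $p,q\geq1$, then every term of $w\ast_\diamond v$ is a linear combination of words of length at least $\max(p,q)$, and at most $p+q$. This follows by a straightforward induction on $p+q$ from the recursion \eqref{eq:quasishuffle}. Each of the three terms prepends one letter (either a letter of $\A$ or an element $a\diamond b\in\K\A$, which is a linear combination of letters) to a quasi-shuffle of total length $p+q-1$ or $p+q-2$. By induction, the result has length at least $1+\max(p-1,q)$, $1+\max(p,q-1)$, or $1+\max(p-1,q-1)$. Each of these is at least $\max(p,q)$, with the convention that the empty word has length $0$ and the base case $\mathbf{1}\ast_\diamond w=w$ is used.

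Now write $P_t=P_{t-1}\ast_\diamond a_t$ and split $P_{t-1}=\pi_1(P_{t-1})+R$, where $R$ is a combination of words of length $\geq2$ (no empty-word component occurs, since $t-1\geq1$). By the length bound, $R\ast_\diamond a_t$ has only words of length $\geq2$, so
\[
\pi_1(P_t)=\pi_1\bigl(\pi_1(P_{t-1})\ast_\diamond a_t\bigr).
\]
For letters, or by bilinearity for elements $x,y\in\K\A$, the recursion gives
\[
x\ast_\diamond y=xy+yx+x\diamond y,
\]
whose length-$1$ part is $x\diamond y$. Applying the induction hypothesis $\pi_1(P_{t-1})=a_1\diamond\cdots\diamond a_{t-1}$, we get $\pi_1(P_t)=(a_1\diamond\cdots\diamond a_{t-1})\diamond a_t$. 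Associativity of $\diamond$ then gives the claim. Commutativity and associativity of $\ast_\diamond$ are not even needed, since we bracket from the left.

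The only mildly delicate point is the length lower bound, specifically making sure the $\K$-linear extension of \eqref{eq:quasishuffle} to entries in $\K\A$ is legitimate. This holds because \eqref{eq:quasishuffle} is bilinear in the first letters $a,b$ and $\diamond$ is bilinear, so the recursion holds verbatim for $x,y\in\K\A$.
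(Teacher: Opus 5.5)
Your proof is correct and follows the paper's argument. Both use induction on $t$, split $P_{t-1}$ into its length-$1$ part and longer words, discard the longer words by the length lower bound for $\ast_\diamond$, and read off $c\diamond a_t$ from $c\ast_\diamond a_t=ca_t+a_tc+c\diamond a_t$ by bilinearity. Your induction on $p+q$ for the bound $\ge\max(p,q)$ simply makes rigorous the step the paper states informally, namely that letters of the same word never merge.
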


\begin{proof}
For letters $a,b\in\A$ and words $u,v\in\KA$, the quasi-shuffle product is defined by
\begin{align}
    \mathbf{1}\ast_{\diamond}u=u\ast_{\diamond}\mathbf{1}=u,
    \quad
    (au)\ast_{\diamond}(bv)
    =
    a(u\ast_{\diamond}(bv))
    +b((au)\ast_{\diamond}v)
    +(a\diamond b)(u\ast_{\diamond}v).
    \label{eq:qsh-left-rec}
\end{align}
We use induction on $t$. The case $t=1$ is clear: the word $a_1$ has length $1$ with coefficient $1$.

Suppose the result holds for $t-1$ letters. Let $P:=a_1\ast_{\diamond}\cdots\ast_{\diamond}a_{t-1}$. By the induction hypothesis, the word-length $1$ part of $P$ is $a_1\diamond\cdots\diamond a_{t-1}$ with coefficient $1$. Write $P=\sum_{r\geq 1}P_r$, where $P_r$ is the word-length $r$ part of $P$. We compute $P\ast_{\diamond}a_t=\sum_{r\geq 1}P_r\ast_{\diamond}a_t$.

For $r\geq 2$: take a word $a_{i_1}\cdots a_{i_r}$ in $P_r$ and apply \eqref{eq:qsh-left-rec} with $u=a_{i_2}\cdots a_{i_r}$, $a=a_{i_1}$, $b=a_t$, $v=\mathbf{1}$. Letters of \(a_{i_1}\cdots a_{i_r}\) cannot merge with each other under quasi-shuffling with a single letter. They can only merge with \(a_t\); hence every output has length \(\ge r \ge 2\).

For $r=1$: $P_1=c$, where $c:=a_1\diamond\cdots\diamond a_{t-1}\in\K\A$. In general $c$ is a $\K$-linear combination of letters, not a single letter; since \eqref{eq:qsh-left-rec} is bilinear, it still applies with $a=c$, $u=\mathbf{1}$, $b=a_t$, $v=\mathbf{1}$:
\begin{align*}
    c\ast_{\diamond}a_t = c(\mathbf{1}\ast_{\diamond}a_t)+a_t(c\ast_{\diamond}\mathbf{1})+(c\diamond a_t)(\mathbf{1}\ast_{\diamond}\mathbf{1}).
\end{align*}
The first term is $ca_t$ and the second is $a_tc$, both of length $2$. The third term is $c\diamond a_t=a_1\diamond\cdots\diamond a_t$, of length $1$. So the word-length $1$ part of $P\ast_{\diamond}a_t$ equals $a_1\diamond\cdots\diamond a_t$.
\end{proof}

For any word $w\in \A^{\ast}$, define the linear map $\Delta_{dec}:\KA\to\KA\otimes\KA$ which splits $w$ into two subwords, i.e. 
\[\Delta_{dec}(w)=\sum_{uv=w}u\otimes v.\] 
This map is the \emph{deconcatenation coproduct}. M.~E.~Hoffman \cite{H} showed that $(\KA,\ast_{\diamond})$ is a commutative $\K$-algebra; see also \cite{HI}. Both references work over fields of characteristic $0$, but the same proof works over any commutative ring $\K$. With $\Delta_{dec}$, it is a bialgebra. In fact, $(\KA,\ast_{\diamond},\Delta_{dec})$ is a Hopf algebra.
\begin{example}
    \begin{enumerate}[(i)]
        \item An important example is the so-called \emph{{harmonic product}} $\ast_{\diamond}=\ast$. It is defined on the commutative algebra $\h^1_{\ast}:=(\Q\langle \A\rangle,\ast)$ generated by the alphabet $\A=\{z_{h}|h\geq1\}$. The corresponding $\diamond$ is defined by 
        \[z_{h_1}\diamond z_{h_2}=
            z_{h_1+h_2}.
        \]
        For an integer $M>0$, this algebra setup defines an algebra homomorphism given by the finite harmonic sum
        \begin{align*}
            \zeta_{M}:\h^1_{\ast}&\to\Q\\
        z_{h_1}\cdots z_{h_r}&\mapsto\zeta_{M}(h_1,\dots,h_r).
        \end{align*}
       The elements $\zeta_M(h_1,\dots,h_r)$ are the \emph{{truncated multiple zeta values}} given by
       \begin{equation*}
           \zeta_M(h_1,\dots,h_r)=\sum_{0<m_1<\cdots<m_r\leq M}\frac{1}{m_1^{h_1}\cdots m_r^{h_r}}.
       \end{equation*}
       \item Another important example uses the same $\diamond$ on the subalgebra $\h^2_{\ast}:=(\Q\langle \mathcal{B}\rangle,\ast)\subset\h^1_{\ast}$ generated by the sub-alphabet $\mathcal{B}=\{z_h|h\geq2\}$. This algebra setup gives an algebra morphism
       \begin{align*}
           \mathbb{G}:\h^2_{\ast}&\to\OH\\
           z_{h_1}\cdots z_{h_r}&\mapsto\mathbb{G}(h_1,\dots,h_r)
       \end{align*}
       whose images are the so-called \emph{{multiple Eisenstein series}}. They are holomorphic functions defined on the upper half-plane $\mathbb{H}$. The constant term of their Fourier expansion is the multiple zeta value with the same index. The other coefficients are MZV-linear combinations.
      \item One more example is the \emph{{$q$-analogue of multiple zeta values}} defined by Bradley \cite{Bra} and Zhao \cite{Zhao}. Taking $\K=\Q[1-q]$, define the $\diamond$ product on $\K\A$ by
   \[
       z_{h_1}\diamond z_{h_2}=z_{h_1+h_2}+(1-q)z_{h_1+h_2-1}.
   \]
   Let $\h^0_{\diamond}\subset\K\langle\A\rangle$ be the $\K$-span of $\mathbf 1$ and of the words $z_{h_1}\cdots z_{h_r}$ with $h_r\geq2$. This span is a $\ast_\diamond$-subalgebra: the last letter of every term in a quasi-shuffle of two such words is one of the two last letters or their $\diamond$-product, and in the latter case both indices $h+h'$ and $h+h'-1$ are at least $3$.
   This specific algebra setup gives an algebra homomorphism
   \begin{align*}
       \zeta_q^{BZ}:(\h^0_{\diamond},\ast_{\diamond})&\to\Q[[q]]\\
       z_{h_1}\cdots z_{h_r}&\mapsto\zeta_q^{BZ}(h_1,\dots,h_r),
   \end{align*}
   
   where the $q$-series $\zeta_q^{BZ}(h_1,\dots,h_r)$ is defined, using the $q$-integers $[m]_q = \frac{1-q^m}{1-q}$, by
   \begin{align*}
       \zeta_q^{BZ}(h_1,\dots,h_r)=\sum_{0<m_1<m_2<\cdots<m_r}\frac{q^{(h_1-1)m_1}\cdots q^{(h_r-1)m_r}}{[m_1]_q^{h_1}\cdots[m_r]_q^{h_r}}.
   \end{align*}
   One reason for its importance is that it recovers the multiple zeta values in the limit as $q \to 1$, i.e.
   \begin{align*}
       \lim_{q\to1}\zeta_q^{BZ}(h_1,\dots,h_r)=\zeta(h_1,\dots,h_r).
   \end{align*}
       \end{enumerate}
       \label{ex:KA}
\end{example}

\subsection{Young tableaux algebra}\label{sec:YTalgsetup}
A \emph{partition} of a natural number $n$ is a tuple $ \lambda=(\lambda_1,\dots,\lambda_l)$ of positive integers satisfying
\begin{align*}
    \lambda_1\geq\cdots\geq\lambda_l\geq1,
    \qquad
    |\lambda|:=\lambda_1+\cdots+\lambda_l=n.
\end{align*}
We call $l$ the \emph{depth} of $\lambda$ and write $\operatorname{dep}(\lambda)=l$.  We also allow the empty partition $\varnothing$, with $|\varnothing|=0$.  When comparing two partitions, we extend them by adding zero parts at the end if necessary.

Let $\lambda$ and $\mu$ be partitions.  We write $\mu\subseteq\lambda$ if
\begin{align*}
    0\leq \mu_i\leq \lambda_i
\end{align*}
for all $i$, after adding zero parts to $\lambda$ and $\mu$ if necessary. For $\mu\subseteq\lambda$, the {\emph{(skew) Young diagram}} of shape
$\lambda/\mu$ is
\begin{align*}
    D(\lambda/\mu)
    :=
    \left\{
        (i,j)\in\Z^2
        \,\middle|\,
        1\leq i\leq \operatorname{dep}(\lambda),
        \ \mu_i<j\leq\lambda_i
    \right\}.
\end{align*}
Here we use the convention that $\mu_i=0$ if $i>\operatorname{dep}(\mu)$.

If $\mu=\varnothing$, or equivalently if all parts of $\mu$ are zero, then we write
\begin{align*}
    D(\lambda):=D(\lambda/\varnothing)
\end{align*}
and call it a \emph{non-skew Young diagram}. In general, when $\mu$ is not necessarily empty, $D(\lambda/\mu)$ is called a \emph{skew Young diagram}.  We refer to $\lambda/\mu$ as a \emph{skew shape}, and to $\lambda$ as a \emph{non-skew shape}.  In this paper, both skew
and non-skew shapes are allowed.

A shape $\lambda/\mu$ is called a {\emph{horizontal strip}} if each column contains at most one box. Equivalently, there are no two distinct boxes $(i,j)$ and $(i',j)$ in $D(\lambda/\mu)$ with the same second coordinate. The simplest examples are the one-row non-skew shapes $\lambda=(n)$, whose Young diagrams consist of a single row.  More generally, a horizontal strip may be skewed and need not be connected.

Let $\A$ be a set and let $\lambda/\mu$ be a skew shape. A Young tableau $\mathbf{h}$ of shape $\lambda/\mu$ with entries in $\A$ is a collection $(h_{i,j})_{(i,j)\in D(\lambda/\mu)}$. For such a tableau, we write $\Sh(\mathbf{h})=\lambda/\mu$.  Later, we write $\eta\subseteq\Sh(\mathbf{h})$ to mean that there exists a partition $\eta'$ with $\mu\subseteq\eta'\subseteq\lambda$ such that $\eta=\eta'/\mu$.
We use the notation $\YT(\lambda/\mu, \A)$ for the set of all \emph{Young tableaux} of shape $\lambda/\mu$ with entries in $\A$:
    \begin{equation*}
        \YT(\lambda/\mu, \A) =\{ (m_{i,j})_{(i,j) \in D(\lambda/\mu)} \mid m_{i,j} \in \A \}.
    \end{equation*} 
 We denote by $\YT(\A)$ the set of all skew Young tableaux with entries in $\A$.

Let $\mathbf{h}$ be a Young tableau with $\Sh(\mathbf{h})=\lambda/\mu$, where $\lambda=(\lambda_1,\dots,\lambda_r)$, and $\mu=(\mu_1,\dots,\mu_r)$. We call $\mathbf{h}$ a  \emph{{connected Young tableau}} if $\mu_i<\lambda_{i+1}$ for every $1\leq i\leq r-1$. If $\mu_j\geq\lambda_{j+1}$ for some $1\leq j\leq r-1$, then we can see that there is no contiguous path of adjacent boxes connecting the boxes in row \( j \) to those in row \( j+1 \) of the Young tableau, so the tableau is not connected. 

\begin{definition} 
Let $\mathbf{h}$ be a Young tableau with $\Sh(\mathbf{h})=\lambda/\mu$. Write $\lambda=(\lambda_1,\dots,\lambda_r)$, and $\mu=(\mu_1,\dots,\mu_r)$, with $0\leq\mu_i\leq\lambda_i$ for $1\leq i\leq r$. We define $\con(\mathbf{h})=\{\mathbf{h}^{(1)},\dots,\mathbf{h}^{(l)}\}$ as the \emph{{multiset of shifted connected components}} of the Young tableau $\mathbf{h}$ as follows.
\begin{enumerate}[(i)]
    \item Let $J=\{1\leq j\leq r|\mu_j\geq\lambda_{j+1}\}$ be the set of all rows not connected to the next row. Set $l=|J|$. Then we can write the Young tableau $\mathbf{h}$ as the tuple $(\mathbf{h}^{'(1)},\dots,\mathbf{h}^{'(l)})$ by cutting the Young tableau $\mathbf{h}$ at each row $j\in J$. Here we set $\lambda_{r+1}=0$ so that $r\in J$ is included, which corresponds to cutting the Young tableau after the last row. 
    \item Assume $0=j_0<j_1<\dots<j_l=r$. For each \(1 \leq i \leq l\), the skew Young tableau \(\mathbf{h}^{(i)}\) is constructed by taking the connected subtableau \(\mathbf{h}'^{(i)} \subseteq \mathbf{h}\), shifting it to the left by \(\min\{\mu_{j_{i-1}+1}, \dots, \mu_{j_i}\}\) boxes, and then shifting it upward by \(j_{i-1}\) boxes, resulting in a skew shape. Here we allow the skew Young tableau $\mathbf{h}^{(i)}$ to be the empty Young tableau, which we denote by $\mathbf{1}$.
\end{enumerate}

\end{definition}

\begin{example} The following examples illustrate the construction of $\con(\mathbf{h})$.
    \begin{enumerate}[(i)]
        \item For $\lambda=(5,3,1)$ and $\mu=(4,3)$, the Young tableau $\mathbf{h}$ has shape $\Sh(\mathbf{h})=\lambda/\mu$
        \begin{center}
\begin{tikzpicture}[scale=0.6] 
    \def\lambda{{5,3,1}}
    \def\mu{{4,3}}

    \definecolor{muColor}{rgb}{0.85,0.85,0.85}

    \node at (-1.2,-1.5) {\(\mathbf{h} =\)};

    \foreach \i in {0,...,2} {
        \pgfmathsetmacro{\rowlen}{\lambda[\i]}
        \foreach \j in {0,...,9} {
            \ifnum\j<\rowlen
                \draw (\j,-\i) rectangle ++(1,-1);
            \fi
        }
    }

    \foreach \i in {0,...,1} {
        \pgfmathsetmacro{\rowlen}{\mu[\i]}
        \foreach \j in {0,...,9} {
            \ifnum\j<\rowlen
                \fill[muColor] (\j,-\i) rectangle ++(1,-1);
            \fi
        }
    }

    \node at (4.5,-0.5) {\(a\)};
    \node at (0.5,-2.5) {\(b\)};

\end{tikzpicture},
\end{center}

       Here $J=\{1,2,3\}$, and hence $\con(\mathbf{h})=\left\{\raisebox{0pt}{ \scalebox{1}{\begin{ytableau}
       a
       \end{ytableau}}},\mathbf{1},\raisebox{0pt}{ \scalebox{1}{\begin{ytableau}
       b
       \end{ytableau}}}\right\}.$
       \item The next example shows the difference between Young tableaux of shapes $(2,2)/(1,1)$ and $(1,1)$. Let
       \begin{center}
\begin{tikzpicture}[scale=0.7]

\node at (-1.2,-1) {\(\mathbf{h}_1 =\)};

\def\lambdaA{{2,2}}
\def\muA{{1,1}}
\definecolor{muColor}{rgb}{0.85,0.85,0.85}

\foreach \i in {0,1} {
    \pgfmathsetmacro{\rowlen}{\lambdaA[\i]}
    \foreach \j in {0,1} {
        \ifnum\j<\rowlen
            \draw (\j,-\i) rectangle ++(1,-1);
        \fi
    }
}

\foreach \i in {0,1} {
    \pgfmathsetmacro{\rowlen}{\muA[\i]}
    \foreach \j in {0,1} {
        \ifnum\j<\rowlen
            \fill[muColor] (\j,-\i) rectangle ++(1,-1);
        \fi
    }
}

\node at (1.5,-0.5) {\(a\)};  
\node at (1.5,-1.5) {\(b\)}; 

\begin{scope}[xshift=5cm]

\node at (-1.2,-1) {\(\mathbf{h}_2 =\)};

\foreach \i in {0,1} {
    \draw (0,-\i) rectangle ++(1,-1);
}

\node at (0.5,-0.5) {\(a\)};  
\node at (0.5,-1.5) {\(b\)};  

\end{scope}
\end{tikzpicture}
\end{center}
As Young tableaux, $\mathbf{h}_1\neq\mathbf{h}_2$, but they have the same multiset
of shifted connected components, i.e. $\con(\mathbf{h}_1)=\con(\mathbf{h}_2)=\left\{\raisebox{0pt}{ \scalebox{0.6}{\begin{ytableau}
       a\\
       b
       \end{ytableau}}}\right\}$, and $J_1=J_2=\{2\}$.
    \end{enumerate}
\end{example}
 
Let $\Y$ be the quotient of the free $\K$-algebra generated by $\YT(\A)$ modulo the relations $\mathbf{h} = \prod_{\mathbf{i} \in \con(\mathbf{h})} \mathbf{i}$ for all Young tableaux $\mathbf{h}$, i.e.
\begin{equation}\label{eq:defKA}
\Y \coloneqq \frac{\K[\YT(\A) ]}{\left\langle \mathbf{h} = \prod_{\mathbf{i} \in \con(\mathbf{h})} \mathbf{i} \right\rangle}.
\end{equation}

Notice that $\Y$ is freely generated by shifted connected Young tableaux. Later, we use $\ast$ to denote the commutative product in $\Y$.   

We now define the cutting coproduct $\Delta_{cut}:\Y\to\Y\otimes\Y$ by cutting a Young tableau into different parts:
\begin{definition}[Cutting coproduct $\Delta_{cut}$ ]
    Let $w$ be a Young tableau with shape $\Sh(w)= \lambda / \mu$. The coproduct $\Delta_{cut}$ is defined by
    \[
    \Delta_{cut}(w)=\sum_{\mu \subseteq \eta \subseteq \lambda}w_{\eta / \mu}\otimes w_{ \lambda /\eta},
    \]
    where, for a skew shape $\alpha$ with $D(\alpha)\subseteq D(\Sh(w))$, $w_{\alpha}$ denotes the corresponding sub-Young tableau.
    The counit $\epsilon:\Y\to\K$ is defined by $\epsilon(\mathbf{1})=1_{\K}$ and $\epsilon(w)=0_{\K}$ otherwise.
\end{definition}
\begin{theorem}
    $\Delta_{cut}$ is well-defined on $\Y$. Moreover, $(\Y,\ast,\Delta_{cut})$ is a bialgebra.
\end{theorem}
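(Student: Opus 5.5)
The plan is to first define $\Delta_{cut}$ on the free commutative algebra $\K[\YT(\A)]$ by the displayed formula on each tableau, extended multiplicatively. Then I check that it respects the defining relations $\mathbf{h}=\prod_{\mathbf{i}\in\con(\mathbf{h})}\mathbf{i}$.

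The key observation is combinatorial. Suppose $\Sh(\mathbf{h})=\lambda/\mu$ and its connected pieces $\mathbf{h}'^{(1)},\dots,\mathbf{h}'^{(l)}$ occupy disjoint row blocks $j_{i-1}<r\le j_i$. Then the intermediate partitions $\eta$ with $\mu\subseteq\eta\subseteq\lambda$ are exactly the tuples $(\eta^{(1)},\dots,\eta^{(l)})$ of intermediate shapes for the blocks, chosen independently. Indeed, since $\mu_{j_i}\ge\lambda_{j_i+1}$, the condition $\eta_{j_i}\ge\eta_{j_i+1}$ is automatic once each $\eta_r$ lies between $\mu_r$ and $\lambda_r$. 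This gives a bijection
\[
\{\eta:\mu\subseteq\eta\subseteq\lambda\}\;\longleftrightarrow\;\prod_{i=1}^l\{\eta^{(i)}\subseteq\Sh(\mathbf{h}'^{(i)})\}.
\]

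Under this bijection, $w_{\eta/\mu}$ is the tableau whose connected components are the pieces $(\mathbf{h}'^{(i)})_{\eta^{(i)}}$ (possibly further disconnected). Likewise $w_{\lambda/\eta}$ has components $(\mathbf{h}'^{(i)})_{\lambda^{(i)}/\eta^{(i)}}$. Shifting a tableau does not change its image in $\Y$, since $\con$ is invariant under translation, and $\con$ of a disjoint union is the union of the $\con$'s. Hence, modulo the relations, $\Delta_{cut}(\mathbf{h})=\prod_i\Delta_{cut}(\mathbf{h}^{(i)})$.

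Here I have to be slightly careful about two things. First, the shift of each component preserves the lattice of intermediate shapes, because a translation of a skew diagram gives an isomorphic poset of order ideals. Second, cutting a connected tableau may produce disconnected pieces, which is handled by the same relation applied to each tensor factor. Together these show that $\Delta_{cut}$ maps the ideal of relations into the ideal $I\otimes\K[\YT]+\K[\YT]\otimes I$, so $\Delta_{cut}$ descends to $\Y\to\Y\otimes\Y$. Since we defined it multiplicatively, it is automatically an algebra map for $\ast$. This multiplicativity is consistent with the formula, because $\mathbf{h}\ast\mathbf{g}$ is represented by a disconnected tableau with components $\mathbf{h},\mathbf{g}$, and the bijection above applies.

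Coassociativity reduces to showing that both $(\Delta\otimes\mathrm{id})\Delta(w)$ and $(\mathrm{id}\otimes\Delta)\Delta(w)$ equal
\[
\sum_{\mu\subseteq\eta\subseteq\theta\subseteq\lambda} w_{\eta/\mu}\otimes w_{\theta/\eta}\otimes w_{\lambda/\theta}.
\]
This holds because the intermediate shapes of $\eta/\mu$ are the $\nu$ with $\mu\subseteq\nu\subseteq\eta$, and the sub-tableau of a sub-tableau is the sub-tableau. It suffices to check this on generators, since both sides are algebra maps. For the counit, only $\eta=\mu$ and $\eta=\lambda$ contribute an empty factor. Then $\epsilon$ is multiplicative and satisfies $(\epsilon\otimes\mathrm{id})\Delta=\mathrm{id}=(\mathrm{id}\otimes\epsilon)\Delta$.

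The main obstacle is the well-definedness step, that is, making the product decomposition of the interval $[\mu,\lambda]$ and its compatibility with the shifting in $\con$ precise. Everything else is formal.
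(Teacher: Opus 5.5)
Your proposal is correct and follows essentially the same route as the paper. In both, the key fact is that the intermediate shapes of a disconnected shape factor independently over its row blocks, which together with shift invariance gives well-definedness and multiplicativity of $\Delta_{cut}$, while coassociativity and the counit follow from the same three-step chain and boundary-term arguments. Your packaging as descent from $\K[\YT(\A)]$, checking that the relation ideal $I$ maps into $I\otimes\K[\YT(\A)]+\K[\YT(\A)]\otimes I$ and doing the row-block bijection in one step, is a more explicit version of the paper's argument, which instead first proves multiplicativity on disjoint unions and then inducts on the number of components.
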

\begin{proof}
    Let us start with coassociativity. It suffices to show that $(\Delta_{cut}\otimes \mathrm{id})\circ\Delta_{cut}(w)=(\mathrm{id}\otimes\Delta_{cut})\circ\Delta_{cut}(w)$ for any $w\in\Y$. Let $\Sh(w)=\lambda/\mu$. Then 
    \begin{align*}
        (\Delta_{cut}\otimes \mathrm{id})\circ\Delta_{cut}(w)&=(\Delta_{cut}\otimes \mathrm{id})\left(\sum_{\mu\subseteq\eta\subseteq\lambda}w_{\eta/\mu}\otimes w_{\lambda/\eta}\right)=\sum_{\mu\subseteq\eta\subseteq\lambda}\left(\sum_{\mu\subseteq\nu\subseteq\eta}w_{\nu/\mu}\otimes w_{\eta/\nu}\right)\otimes w_{\lambda/\eta}\\
        &=\sum_{\mu\subseteq\nu\subseteq\eta\subseteq\lambda}w_{\nu/\mu}\otimes w_{\eta/\nu}\otimes w_{\lambda/\eta}=\sum_{\mu\subseteq\nu\subseteq\lambda}w_{\nu/\mu}\otimes \left(\sum_{\nu\subseteq\eta\subseteq\lambda}w_{\eta/\nu}\otimes w_{\lambda/\eta}\right)\\
        &=(\mathrm{id}\otimes\Delta_{cut})\circ\Delta_{cut}(w).
    \end{align*}
   It remains to show that $\Delta_{cut}$ and $\epsilon$ are algebra homomorphisms. Let $w,v\in\Y$ with $\Sh(w)=\lambda/\mu$ and $\Sh(v)=\nu/\eta$. It is enough to verify $\Delta_{cut}(w\ast v)=\Delta_{cut}(w)\ast\Delta_{cut}(v)$ and $\epsilon(w\ast v)=\epsilon(w)\cdot\epsilon(v)$. For the counit $\epsilon$, both sides map to $0$ unless both Young tableaux are empty, in which case they evaluate to $1$. For the coproduct, we first assume that $w,v$ are connected, so $|\con(w)|=|\con(v)|=1$. Set $x=w\ast v$. In the product $w\ast v$, the two tableaux occupy disjoint rows and disjoint columns. Hence $\con(x)=\{w,v\}$, and every sub-Young diagram $\alpha$ of $\Sh(x)$ splits uniquely as $\alpha=\beta\sqcup\gamma$ with $\mu\subseteq\beta\subseteq\lambda$ and $\eta\subseteq\gamma\subseteq\nu$. Therefore,
    \begin{align}
        \Delta_{cut}(x)&=\sum_{\alpha\subseteq\Sh(x)}x_{\alpha}\otimes x_{\Sh(x)/\alpha}=\sum_{\substack{\mu\subseteq\beta\subseteq\lambda\\\eta\subseteq\gamma\subseteq\nu}}(w_{\beta/\mu}\ast v_{\gamma/\eta})\otimes(w_{\lambda/\beta}\ast v_{\nu/\gamma})\\
        &=\left(\sum_{\mu\subseteq\beta\subseteq\lambda}w_{\beta/\mu}\otimes w_{\lambda/\beta}\right)\ast\left(\sum_{\eta\subseteq\gamma\subseteq\nu}v_{\gamma/\eta }\otimes v_{\nu/\gamma}\right)=\Delta_{cut}(w)\ast\Delta_{cut}(v).
        \label{eq:deltasingle}
    \end{align}
    For the second equality, consider a cut of $x$ into an upper part $x_1$ and a lower part $x_2$. Since $w$ and $v$ lie in disjoint rows and columns, this cut restricts to a cut of $w$ into $w_1,w_2$ and a cut of $v$ into $v_1,v_2$. The parts $w_i$ and $v_i$ may be empty or disconnected, so $\con(x_i)=\con(w_i)\sqcup\con(v_i)$. By the defining relation of $\Y$,
    \begin{align*}
        x_i=\prod_{c\in\con(x_i)}c=\Bigl(\prod_{c\in\con(w_i)}c\Bigr)\ast\Bigl(\prod_{c\in\con(v_i)}c\Bigr)=w_i\ast v_i,\qquad i=1,2.
    \end{align*}

    The computation of \eqref{eq:deltasingle} used only that $w$ and $v$ occupy disjoint rows and columns, not that they are connected. Hence \eqref{eq:deltasingle} holds for any $w,v\in\Y$. Now let $w\in\Y$ with $\con(w)=\{w^{(1)},\dots,w^{(l)}\}$. We show
    \begin{align}
        \Delta_{cut}(w)=\prod_{i=1}^l\Delta_{cut}\left(w^{(i)}\right)
    \end{align}
    by induction on $l$. The case $l\le1$ is clear. For $l\ge2$, write $w=w^{(1)}\ast v$ with $v=w^{(2)}\ast\cdots\ast w^{(l)}$. The factor $v$ need not be connected, but \eqref{eq:deltasingle} still applies to $(w^{(1)},v)$. Using it and the induction hypothesis for $v$,
    \begin{align*}
        \Delta_{cut}(w)=\Delta_{cut}\left(w^{(1)}\right)\ast\Delta_{cut}(v)=\prod_{i=1}^l\Delta_{cut}\left(w^{(i)}\right).
    \end{align*}
    Moreover, let $c$ be a connected Young tableau and let $\tilde c$ be a shifted copy of $c$. Shifting induces a bijection between the chains $\mu\subseteq\eta\subseteq\lambda$ for the two shapes. The corresponding cut pieces are shifted copies of each other, hence equal in $\Y$ by \eqref{eq:defKA}. Hence $\Delta_{cut}(c)=\Delta_{cut}(\tilde c)$. Therefore each factor $\Delta_{cut}\bigl(w^{(i)}\bigr)$ may be computed on the shifted representative in $\con(w)$.
    In particular, $\Delta_{cut}(w)$ depends only on the connected components of $w$, so $\Delta_{cut}$ is well-defined on $\Y$. Finally, for any $w,v\in\Y$, \eqref{eq:deltasingle} gives $\Delta_{cut}(w\ast v)=\Delta_{cut}(w)\ast\Delta_{cut}(v)$. Therefore $\Delta_{cut}$ is an algebra homomorphism.

    Finally, we check the counit. For a Young tableau $w$ of shape $\lambda/\mu$,
\begin{align*}
    (\epsilon\otimes\mathrm{id})\Delta_{cut}(w)=\sum_{\mu\subseteq\eta\subseteq\lambda}\epsilon(w_{\eta/\mu})\,w_{\lambda/\eta}=w,
\end{align*}
since only the term $\eta=\mu$ survives; similarly $(\mathrm{id}\otimes\epsilon)\Delta_{cut}(w)=w$. By linearity, the counit holds on $\Y$.
\end{proof}
Since both $\ast$ and $\Delta_{cut}$ respect the grading by the number of boxes, and since the degree-$0$ component is $\Y_0=\K\mathbf 1$, the bialgebra $(\Y,\ast,\Delta_{cut})$ is connected and graded; every connected graded bialgebra admits a unique antipode. We conclude the following.
\begin{theorem}
    $(\Y,\ast,\Delta_{cut},S)$ is a connected commutative graded Hopf algebra.
\end{theorem}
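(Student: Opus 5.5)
The plan is to combine the bialgebra theorem just proved with the standard fact that a connected graded bialgebra carries a unique antipode, given by a recursive formula; only a few bookkeeping points need checking.

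\emph{Grading.} Let $\Y_n$ be the $\K$-span of products of shifted connected tableaux with $n$ boxes in total. The defining relation \eqref{eq:defKA} identifies a tableau with the product of its connected components, and $\con$ preserves the total number of boxes. Hence the relations are homogeneous, and $\Y=\bigoplus_{n\geq0}\Y_n$ is a graded algebra. Since $\Y$ is freely generated by shifted connected tableaux, $\Y_0=\K\mathbf 1$, so $\Y$ is connected.

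\emph{Compatibility of the coproduct.} For a tableau $w$ of shape $\lambda/\mu$ and a chain $\mu\subseteq\eta\subseteq\lambda$, the pieces $w_{\eta/\mu}$ and $w_{\lambda/\eta}$ partition the boxes of $w$. So $\Delta_{cut}(\Y_n)\subseteq\bigoplus_{p+q=n}\Y_p\otimes\Y_q$, and $\epsilon$ vanishes in positive degree. Commutativity of $\ast$ holds by construction, since $\Y$ is a quotient of the commutative polynomial ring $\K[\YT(\A)]$.

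\emph{Antipode.} I would define $S$ degreewise by $S(\mathbf 1)=\mathbf 1$ and, for homogeneous $w$ of degree $n\geq1$,
\begin{align*}
S(w)=-w-\sum_{\mu\subsetneq\eta\subsetneq\lambda}S(w_{\eta/\mu})\ast w_{\lambda/\eta}.
\end{align*}
This uses $\Delta_{cut}(w)=\mathbf 1\otimes w+w\otimes\mathbf 1+(\text{terms of lower degree in both factors})$, which follows from connectedness, and it is exactly the condition $m\circ(S\otimes\mathrm{id})\circ\Delta_{cut}=\eta\circ\epsilon$. The recursion terminates because every $w_{\eta/\mu}$ in the sum has fewer boxes than $w$.

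\emph{Well-definedness on $\Y$.} The recursion is linear in $\Delta_{cut}(w)$, and $\Delta_{cut}$ is already well-defined on $\Y$, so $S$ can be defined as a linear map on each graded piece. The cleanest route is to avoid tableaux altogether and phrase everything via convolution in $\mathrm{End}_\K(\Y)$. Write $\mathrm{id}=\eta\epsilon+\pi$, where $\pi$ is the projection onto positive degree. Then $\pi$ is locally nilpotent under convolution, because $\pi^{\star(k)}$ vanishes on $\Y_n$ for $k>n$. Hence
\begin{align*}
S=\sum_{k\geq0}(-1)^k\pi^{\star k}
\end{align*}
is a finite sum on each $\Y_n$ and is a two-sided convolution inverse of $\mathrm{id}$. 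Two-sidedness follows because $S$ is built as a geometric series in a single element $\pi$, so left and right inverses coincide.

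I do not expect a genuine obstacle here; the only point needing care is that the grading and connectedness are compatible with the quotient in \eqref{eq:defKA}, which the first step settles. Uniqueness of the antipode follows from uniqueness of convolution inverses. Since $\ast$ is commutative, $S$ is moreover an algebra anti-homomorphism, hence an algebra homomorphism. This completes the Hopf structure and recovers the recursive formula \eqref{eq:antipode} mentioned in the introduction.
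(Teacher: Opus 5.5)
Your proposal is correct and follows the paper's route. The paper likewise combines the preceding bialgebra theorem with the facts that $\ast$ and $\Delta_{cut}$ preserve the box-count grading and that $\Y_0=\K\mathbf 1$, then invokes the unique antipode of a connected graded bialgebra, recorded as the Takeuchi formula \eqref{eq:antipode}; your series $S=\sum_k(-1)^k\pi^{\star k}$ is exactly that formula, so you are only writing out details the paper leaves implicit.
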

The antipode $S$ is uniquely given by the Takeuchi formula. For $\mathbf{h}\in\Y$ with shape $\lambda/\mu$, the antipode $S$ is 
\begin{equation}\label{eq:antipode}
    S(\mathbf{h})=\sum_{0\le r}(-1)^r\sum_{\mu=I_0\subsetneq I_1\subsetneq\cdots\subsetneq I_r=\lambda}\mathbf{h}_{I_1/I_0}\ast\cdots\ast\mathbf{h}_{I_r/I_{r-1}}.
\end{equation}

Let $\Ydiag$ be the subalgebra of $\Y$ generated by shifted connected Young tableaux whose entries are constant on each diagonal.
\begin{corollary}
    $\Ydiag$ is a commutative graded Hopf subalgebra of $\Y$.\label{cor:CRYdiagHopf}
\end{corollary}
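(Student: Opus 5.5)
The corollary amounts to three checks: $\Ydiag$ is a graded subalgebra containing $\mathbf 1$, it is closed under $\Delta_{cut}$, and it is stable under the antipode $S$.

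\emph{Subalgebra and grading.} By definition $\Ydiag$ is the subalgebra generated by shifted connected diagonal constant tableaux. It is therefore closed under $\ast$ and contains $\mathbf 1$. It is commutative because $\Y$ is. Each generator is homogeneous for the grading by number of boxes, so $\Ydiag$ is a graded subalgebra. We will also use that $\Ydiag$ is spanned by (images of) diagonal constant tableaux, possibly disconnected. Indeed, placing components in disjoint rows and columns realizes a product of diagonal constant components as a single tableau, up to the identification \eqref{eq:defKA}. Conversely, each component of a diagonal constant tableau is diagonal constant after shifting. Shifting a connected piece left by $s$ columns and up by $t$ rows changes $j-i$ by the constant $t-s$. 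Hence the entries of the piece remain constant on diagonals; the diagonals are only relabeled.

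\emph{Closure under the coproduct.} Since $\Delta_{cut}$ is an algebra homomorphism on $\Y$ (by the bialgebra theorem above), it suffices to check it on a connected diagonal constant generator $w$ of shape $\lambda/\mu$. Every term of $\Delta_{cut}(w)$ has the form $w_{\eta/\mu}\otimes w_{\lambda/\eta}$. Each factor is a sub-tableau of $w$ whose boxes keep their positions $(i,j)$, so its entries depend only on $j-i$. Thus each factor is a diagonal constant tableau, possibly disconnected. By the preceding paragraph, its image in $\Y$ is a product of shifted connected diagonal constant tableaux and lies in $\Ydiag$. Therefore $\Delta_{cut}(\Ydiag)\subseteq\Ydiag\otimes\Ydiag$. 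The counit restricts trivially.

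\emph{Antipode.} By the Takeuchi formula \eqref{eq:antipode}, $S(\mathbf h)$ is a signed sum of products $\mathbf h_{I_1/I_0}\ast\cdots\ast\mathbf h_{I_r/I_{r-1}}$ of sub-tableaux of $\mathbf h$. By the same argument as for the coproduct, each such factor lies in $\Ydiag$ when $\mathbf h$ does. Alternatively, one can note that $\Ydiag$ is a graded sub-bialgebra with $(\Ydiag)_0=\K\mathbf 1$, so it is connected and its own antipode, given by the same recursion, coincides with the restriction of $S$. Either way, $\Ydiag$ is a Hopf subalgebra.

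The only point that needs care is the well-definedness issue: sub-tableaux and shifted components must stay diagonal constant. This is resolved by the observation that shifts translate diagonals uniformly.
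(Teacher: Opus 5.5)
Your proposal is correct and follows the paper's proof. Closure under $\ast$ holds by definition. Closure under $\Delta_{cut}$ is checked on connected diagonal constant generators and extended by multiplicativity, and stability under the antipode is read off from the Takeuchi formula for $S$. Your observation that a shift moves every diagonal by the same constant, so that a possibly disconnected cut piece splits into shifted connected diagonal constant factors, makes explicit a step the paper leaves implicit when it asserts that each cut piece lies in $\Ydiag$.
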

\begin{proof}
$\Ydiag$ is closed under $\ast$ by definition. For a connected diagonal constant tableau $w$, every cut piece $w_{\eta/\mu}$ and $w_{\lambda/\eta}$ is again diagonal constant, so $\Delta_{cut}(w)\in\Ydiag\otimes\Ydiag$; since $\Delta_{cut}$ is an algebra homomorphism, $\Ydiag$ is closed under $\Delta_{cut}$. The grading is inherited, and the antipode preserves $\Ydiag$ by \eqref{eq:antipode}.
\end{proof}
\subsection{Linearization}\label{sec:Lmap}

A fundamental property of Schur multiple zeta values is that they can be expressed as linear combinations of multiple zeta values by ordering the summation indices. Multiple zeta values correspond to Schur multiple zeta values of single-column Young tableaux. Our algebra is designed to include this linearization property.
To formalize this expansion, we introduce the notion of semi-standard decompositions.

\begin{definition}[Semi-standard decomposition]\label{def:SSD}
Let $\lambda/\mu$ be a skew shape. If $\lambda=\mu$, we define $\operatorname{SSD}(\lambda/\mu)$ to consist of the empty chain. Assume $\lambda\neq\mu$. A \emph{semi-standard decomposition} of the shape $\lambda/\mu$ is a chain of partitions
\begin{equation*}
    \mu=\eta^{(0)}\subsetneq\eta^{(1)}\subsetneq\cdots
    \subsetneq\eta^{(r)}=\lambda
\end{equation*}
such that for $1\leq t\leq r$ each skew shape $\eta^{(t)}/\eta^{(t-1)}$ is a horizontal strip. For such a chain, set $D_t := D\bigl(\eta^{(t)}/\eta^{(t-1)}\bigr)$ for $1\leq t\leq r$. We use the tuple $(D_1,\dots,D_r)$ to denote the semi-standard decomposition. Thus
\begin{equation*}
    D(\lambda/\mu) =  D_1\sqcup\cdots\sqcup D_r.
\end{equation*}
We denote by $\operatorname{SSD}(\lambda/\mu)$ the set of all such semi-standard decompositions of the shape $\lambda/\mu$.
\end{definition}
\begin{example}
    \begin{align*}
    \operatorname{SSD}((2,1))=\operatorname{SSD}\left( \ydiagram{2,1}\right)=\left\{ { \begin{ytableau}
1 & 1 \\
2 &  \none
\end{ytableau}},\ { \begin{ytableau}
1 & 2 \\
2 &  \none
\end{ytableau}},\ { \begin{ytableau}
1 & 3 \\
2 &  \none
\end{ytableau}},\ { \begin{ytableau}
1 & 2 \\
3 &  \none
\end{ytableau}} \right\}.
\end{align*}
In the first case, ${ \ytableausetup{centertableaux, boxsize=1.3em} \begin{ytableau}
1 & 1 \\
2 &  \none
\end{ytableau} }$, $D_1=\{(1,1),(1,2)\}=\ydiagram{2}$, $D_2=\{(2,1)\}=\ydiagram{1}$.
\end{example}
The reason we call these chains semi-standard decompositions is that, if we fill every box in $D_i$ with $i$ for $1\leq i\leq r$, then the Young tableau corresponding to $(D_1,\dots,D_r)$ is a semi-standard Young tableau.
\begin{lemma}\label{lem:SSD}
Let $(D_1,\dots,D_r)\in\operatorname{SSD}(\lambda/\mu)$. Fill every box in $D_t$ with the entry $t$. Then the resulting filling is a semi-standard Young tableau of shape $\lambda/\mu$. The converse also holds.
\end{lemma}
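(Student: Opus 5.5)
The plan is to prove both directions by translating between the chain $\mu=\eta^{(0)}\subsetneq\cdots\subsetneq\eta^{(r)}=\lambda$ and the filling $T$ that assigns $t$ to every box of $D_t$. The key observation is that, for each $t$, the boxes of $T$ with entry $\le t$ form exactly $D(\eta^{(t)}/\mu)$.

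\emph{Forward direction.} Let $(D_1,\dots,D_r)\in\operatorname{SSD}(\lambda/\mu)$ and let $T$ be the associated filling.

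First, rows weakly increase. Take boxes $(i,j),(i,j')$ with $j<j'$, and say $(i,j')\in D_t$, so $\eta^{(t-1)}_i<j'\le\eta^{(t)}_i$. Then $j<j'\le\eta^{(t)}_i$ and $j>\mu_i$, so $(i,j)\in D(\eta^{(t)}/\mu)$. Hence the entry of $(i,j)$ is at most $t$.

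Second, columns strictly increase. Take $(i,j),(i+1,j)$ and suppose $(i+1,j)\in D_t$, so $j\le\eta^{(t)}_{i+1}\le\eta^{(t)}_i$. This places $(i,j)$ in $D(\eta^{(t)}/\mu)$, so its entry $s$ satisfies $s\le t$. If $s=t$, then $D_t=D(\eta^{(t)}/\eta^{(t-1)})$ would contain two boxes in column $j$, contradicting that it is a horizontal strip. Hence $s<t$.

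\emph{Converse.} Let $T$ be a semi-standard filling of $\lambda/\mu$ with entries $1,\dots,r$, where each value $1,\dots,r$ actually occurs. Define
\[
\eta^{(t)}_i:=\mu_i+\#\{j : T_{i,j}\le t\}.
\]
Since rows weakly increase, the boxes with entry $\le t$ in row $i$ form an initial segment of that row. So $D(\eta^{(t)}/\mu)$ is exactly the set of boxes with entry $\le t$, and $\eta^{(t-1)}\subsetneq\eta^{(t)}$ because the value $t$ occurs. The endpoints are $\eta^{(0)}=\mu$ and $\eta^{(r)}=\lambda$.

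The main step is to check that each $\eta^{(t)}$ is a partition, i.e.\ $\eta^{(t)}_{i+1}\le\eta^{(t)}_i$. Write $j=\eta^{(t)}_{i+1}$ and assume $j>\mu_{i+1}$ (otherwise $\eta^{(t)}_{i+1}=\mu_{i+1}\le\mu_i\le\eta^{(t)}_i$). Then $(i+1,j)$ has entry $\le t$. If $j\le\mu_i$ we are done. Otherwise $(i,j)$ is a box of $\lambda/\mu$, since $j\le\lambda_{i+1}\le\lambda_i$. Column strictness gives $T_{i,j}<T_{i+1,j}\le t$, so $j\le\eta^{(t)}_i$.

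Finally, the horizontal strip condition holds for the same reason. If $(i,j)$ and $(i+1,j)$ both had entry $t$, column strictness would be violated.

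The two constructions are mutually inverse, so this gives the claimed bijection. The only delicate point is the partition property of $\eta^{(t)}$, which needs the interplay between the skew shape $\mu$ and column strictness as handled above.
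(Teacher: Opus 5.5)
Your proposal is correct and takes essentially the same route as the paper. Both arguments use the correspondence in which the boxes with entry $\le t$ form $D(\eta^{(t)}/\mu)$, and both derive the partition property of $\eta^{(t)}$ in the converse from weak row increase plus column strictness. You phrase that step as $\eta^{(t)}_{i+1}\le\eta^{(t)}_i$, while the paper phrases it as closure of $\eta^{(t)}$ under left and top neighbors.

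Your forward column argument first shows that the upper entry is $\le t$ and only then excludes equality via the horizontal-strip condition. This is slightly more complete than the paper's version, which only rules out equal entries in a column.
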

\begin{proof}
By definition, $ D_1\sqcup\cdots\sqcup D_t = D\bigl(\eta^{(t)}/\mu\bigr)$ for each $t$. Hence, the entries weakly increase along rows. It remains to check the column condition. If two boxes in the same column had the same entry $t$, then both would belong to $D_t = D\bigl(\eta^{(t)}/\eta^{(t-1)}\bigr)$, contradicting the assumption that $\eta^{(t)}/\eta^{(t-1)}$ is a horizontal strip. Thus, the entries strictly increase down columns. Therefore, the filling is semi-standard.

Conversely, let $T$ be a semi-standard Young tableau of shape $\lambda/\mu$ with entry set $\{1,\dots,r\}$. Let $D_t$ be the boxes of $T$ with entry $t$, and set $\eta^{(t)}=\mu\cup(D_1\sqcup\cdots\sqcup D_t)$. We only need each $\eta^{(t)}$ to be a partition. Let $(i,j)\in\eta^{(t)}$. If $(i,j)\in\mu$, its left and top neighbors lie in $\mu\subseteq\eta^{(t)}$. Otherwise $(i,j)$ is a box of $T$ with entry at most $t$; by the row condition its left neighbor has entry at most $t$, and by the column condition its top neighbor has entry less than $t$, unless these neighbors lie in $\mu$. In every case the left and top neighbors lie in $\eta^{(t)}$, so $\eta^{(t)}$ is a partition. Hence $\mu=\eta^{(0)}\subsetneq\cdots\subsetneq\eta^{(r)}=\lambda$, the inclusions being strict since each entry occurs. Each $\eta^{(t)}/\eta^{(t-1)}=D_t$ is a horizontal strip, since no column contains two equal entries. Thus $(D_1,\dots,D_r)\in\operatorname{SSD}(\lambda/\mu)$, recovering $D_t$ as the boxes with entry $t$.
\end{proof}

We construct a linear map $L_{\diamond}$ from Young tableaux with entries in $\A$ to the quasi-shuffle algebra $\KA$. We call it a \emph{{linearization of Young tableaux}}.

\begin{definition}[Linearization]
    Let $\mathbf{h}$ be a Young tableau. Define
    \begin{align}
    L_{\diamond}:\Y&\longrightarrow \KA\\
    \mathbf{h}&\longmapsto\sum_{(D_1,\dots,D_r)\in \operatorname{SSD}(\Sh(\mathbf{h}))}|\mathbf{h} _{D_1}|_{\diamond}\cdots|\mathbf{h}_{D_r}|_{\diamond}
    \label{def:Ld}
\end{align}
Here $|\mathbf{h}_{D_t}|_{\diamond}:= \mathop{\diamond}_{(i,j)\in D_t} h_{i,j} \in \K\A$. The product $|\mathbf{h}_{D_1}|_{\diamond} \cdots |\mathbf{h}_{D_r}|_{\diamond}$ is the concatenation product in $\KA$. For the empty tableau $\mathbf{1}$, we set $L_{\diamond}(\mathbf{1})=\mathbf{1}$.
\end{definition}

\begin{proposition}
    $L_{\diamond}:(\Y,\ast)\to(\KA,\ast_{\diamond})$ is a surjective $\K$-algebra homomorphism.
    \label{prop:L-alghom}
\end{proposition}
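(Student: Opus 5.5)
Three things must be checked: that $L_\diamond$ is well defined on the quotient \eqref{eq:defKA}, that it is multiplicative, and that it is surjective. Well-definedness and multiplicativity both reduce to one identity: for any Young tableau $\mathbf{h}$ whose shape splits as a disjoint union of two sub-tableaux $\mathbf{h}'$ and $\mathbf{h}''$ occupying disjoint rows and columns, with $\mathbf{h}'$ placed to the upper right of $\mathbf{h}''$,
\begin{align*}
L_\diamond(\mathbf{h}) = L_\diamond(\mathbf{h}')\ast_\diamond L_\diamond(\mathbf{h}'').
\end{align*}
We also need $L_\diamond$ to be invariant under shifting a connected tableau. This is immediate, since shifting gives a bijection of semi-standard decompositions that preserves the entries in each block.

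With this identity, the defining relation $\mathbf{h}=\prod_{\mathbf{i}\in\con(\mathbf{h})}\mathbf{i}$ is respected by induction on the number of components, and the product $\ast$ maps to $\ast_\diamond$. So everything rests on proving the displayed identity.

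To prove it, we pass to semi-standard fillings via \cref{lem:SSD}. An element of $\operatorname{SSD}(\Sh(\mathbf{h}))$ is the same as a semi-standard filling $T$ of $\Sh(\mathbf{h})$ whose set of entries is exactly $\{1,\dots,r\}$. Because the two pieces share no rows or columns, the semi-standard condition on $T$ is exactly the conjunction of the conditions on the restrictions $T'$ and $T''$. No inequalities link the two pieces. Standardizing the value sets of $T'$ and $T''$ produces a pair
\begin{align*}
(D'_1,\dots,D'_p)\in\operatorname{SSD}(\Sh(\mathbf{h}')),\qquad (D''_1,\dots,D''_q)\in\operatorname{SSD}(\Sh(\mathbf{h}'')),
\end{align*}
together with a surjective order-preserving pair of maps $\{1,\dots,p\}\sqcup\{1,\dots,q\}\to\{1,\dots,r\}$, each injective on its own part. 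Such pairs of maps are precisely the data indexing the terms of the quasi-shuffle of a word of length $p$ with a word of length $q$. A level $t$ hit by both parts corresponds to a merged letter. In $L_\diamond(\mathbf{h})$ the corresponding block is $D_t=D'_i\sqcup D''_j$, contributing $|\mathbf{h}'_{D'_i}|_\diamond\diamond|\mathbf{h}''_{D''_j}|_\diamond$, which matches the $\diamond$-term in \eqref{eq:quasishuffle}.

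The main obstacle is making this matching precise, since both the quasi-shuffle product and $L_\diamond$ are multilinear in letters lying in $\K\A$ rather than in $\A$. I would use the standard closed description of $u\ast_\diamond v$ as a sum over such surjections, that is, over ``stuffles''. I would prove that description by induction on $p+q$ from the recursion \eqref{eq:quasishuffle}, splitting on whether the first level $1$ is hit by $u$ only, by $v$ only, or by both. The argument works verbatim with letters in $\K\A$ by bilinearity, as in \cref{lem:full-merge}. Associativity and commutativity of $\diamond$ make $|\cdot|_\diamond$ independent of the ordering of boxes.

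Surjectivity is easy. For a word $a_1\cdots a_r$, take the single-column tableau with entries $a_1,\dots,a_r$ from top to bottom. Its only semi-standard decomposition is the chain of single boxes, because a horizontal strip contains at most one box of a column. Hence its image under $L_\diamond$ is exactly $a_1\cdots a_r$, and $\mathbf{1}\mapsto\mathbf{1}$.
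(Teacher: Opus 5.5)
Your proposal is correct and follows the paper's route. The paper also matches semi-standard decompositions of the disconnected shape $\mathbf{h}\ast\mathbf{g}$ with pairs of decompositions of the two pieces together with jointly surjective, strictly increasing relabelings $(\alpha,\beta)$; it identifies the three cases (label coming only from $\mathbf{h}$, only from $\mathbf{g}$, or merged via $\diamond$) with the three terms of the quasi-shuffle, and proves surjectivity with the one-column tableau. Your version is somewhat more careful: you check compatibility with the defining relations $\mathbf{h}=\prod_{\mathbf{i}\in\con(\mathbf{h})}\mathbf{i}$ and shift invariance explicitly, and you derive the closed ``stuffle'' description of $\ast_\diamond$ from the recursion by induction, a step the paper only asserts.
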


\begin{proof}
We first prove multiplicativity.  It is enough to prove that
\begin{align*}
    L_{\diamond}(\mathbf{h}\ast\mathbf{g}) = L_{\diamond}(\mathbf{h})\ast_{\diamond}L_{\diamond}(\mathbf{g})
\end{align*}
for Young tableaux $\mathbf{h},\mathbf{g}\in\Y$.  If one of the two tableaux is empty, the identity is immediate.  Hence, we assume that both are non-empty.

In the product $\mathbf{h}\ast\mathbf{g}$, the two tableaux are regarded as two disconnected components.  Therefore, there are no row or column relations between a box of $\mathbf{h}$ and a box of $\mathbf{g}$. Let
\begin{align*}
    D=(D_1,\dots,D_r)\in \operatorname{SSD}(\Sh(\mathbf{h})),
    \qquad
    E=(E_1,\dots,E_s)\in \operatorname{SSD}(\Sh(\mathbf{g})),
\end{align*}
and put
\begin{align*}
    u_i:=|\mathbf{h}_{D_i}|_{\diamond},
    \qquad
    v_j:=|\mathbf{g}_{E_j}|_{\diamond}.
\end{align*}
We describe how the semi-standard decompositions of $\mathbf{h}\ast\mathbf{g}$ are obtained from those of $\mathbf{h}$ and $\mathbf{g}$. Fix $D$ and $E$.  An interleaving of the two ordered decompositions is encoded by an integer $p\ge\max\{r,s\}$ together with two strictly increasing maps
\begin{align*}
    \alpha:\{1,\dots,r\}\hookrightarrow\{1,\dots,p\},
    \qquad
    \beta:\{1,\dots,s\}\hookrightarrow\{1,\dots,p\},
\end{align*}
such that
\begin{align*}
    \operatorname{im}(\alpha)\cup\operatorname{im}(\beta) = \{1,\dots,p\}.
\end{align*}
Here $\alpha(i)$ is the new label assigned to the block $D_i$, and $\beta(j)$ is the new label assigned to the block $E_j$.  For $1\leq t\leq p$, define
\begin{align*}
    F_t = \left(\bigsqcup_{\alpha(i)=t}D_i\right) \sqcup \left(\bigsqcup_{\beta(j)=t}E_j\right).
\end{align*}
Since the images of $\alpha$ and $\beta$ cover $\{1,\dots,p\}$, each $F_t$ is non-empty.  Thus
\begin{align*}
    F=(F_1,\dots,F_p)
\end{align*}
is a decomposition of the boxes of $\mathbf{h}\ast\mathbf{g}$.

We claim that $F$ is semi-standard. Indeed, assign the label $t$ to the boxes of $F_t$.  On $\mathbf{h}$, this filling restricts to the semi-standard filling determined by $D$. On $\mathbf{g}$, it restricts to the filling determined by $E$. Since there are no row or column relations between the two connected components, the resulting filling of $\mathbf{h}\ast\mathbf{g}$ is semi-standard.  Hence
\begin{align*}
    F=(F_1,\dots,F_p)\in \operatorname{SSD}(\mathbf{h}\ast\mathbf{g}).
\end{align*}

Conversely, let
\begin{align*}
    F=(F_1,\dots,F_p)\in \operatorname{SSD}(\mathbf{h}\ast\mathbf{g}).
\end{align*}
Intersecting each block $F_t$ with the boxes of $\mathbf{h}$ and deleting the empty intersections gives a semi-standard decomposition
\begin{align*}
    D=(D_1,\dots,D_r)\in \operatorname{SSD}(\Sh(\mathbf{h})).
\end{align*}
Similarly, intersecting each block $F_t$ with the boxes of $\mathbf{g}$ and deleting the empty intersections gives
\begin{align*}
    E=(E_1,\dots,E_s)\in \operatorname{SSD}(\Sh(\mathbf{g})).
\end{align*}
The original entries $1,\dots,p$ then record two strictly increasing maps
\begin{align*}
    \alpha:\{1,\dots,r\}\hookrightarrow\{1,\dots,p\},
    \qquad
    \beta:\{1,\dots,s\}\hookrightarrow\{1,\dots,p\}.
\end{align*}
Since every $F_t$ is non-empty, the images of $\alpha$ and $\beta$ cover $\{1,\dots,p\}$.  This construction is inverse to the construction above.

It remains to compare the corresponding words in $\KA$.  For each label $1\leq t\leq p$, exactly one of the following three cases occurs:
\begin{align}\label{eq:threecases}
    t\in\operatorname{im}(\alpha)\setminus\operatorname{im}(\beta), \qquad t\in\operatorname{im}(\beta)\setminus\operatorname{im}(\alpha), \qquad t\in\operatorname{im}(\alpha)\cap\operatorname{im}(\beta).
\end{align}
In the first case, say $t=\alpha(i)$, the block $F_t$ comes only from $\mathbf{h}$, and its contribution is
\begin{align*}
    |(\mathbf{h}\ast\mathbf{g})_{F_t}|_{\diamond} = |\mathbf{h}_{D_i}|_{\diamond}  =  u_i.
\end{align*}
In the second case, say $t=\beta(j)$, the block $F_t$ comes only from $\mathbf{g}$, and its contribution is
\begin{align*}
    |(\mathbf{h}\ast\mathbf{g})_{F_t}|_{\diamond}  = |\mathbf{g}_{E_j}|_{\diamond}  =  v_j.
\end{align*}
In the third case, say $t=\alpha(i)=\beta(j)$, the two blocks receive the same label and are merged.  The contribution is therefore
\begin{align*}
    |(\mathbf{h}\ast\mathbf{g})_{F_t}|_{\diamond}  =  |\mathbf{h}_{D_i}|_{\diamond} \diamond |\mathbf{g}_{E_j}|_{\diamond}   = u_i\diamond v_j.
\end{align*}

Fix $D$ and $E$. Summing over all choices of $p,\alpha,\beta$ gives the quasi-shuffle product $(u_1\cdots u_r)\ast_{\diamond}(v_1\cdots v_s)$. Indeed, the three cases in \eqref{eq:threecases} are the three alternatives in a quasi-shuffle \eqref{eq:quasishuffle}: take the next block from the first word, take the next block from the second word, or merge one block from each word by $\diamond$.

Therefore,
\begin{align*}
    L_{\diamond}(\mathbf{h}\ast\mathbf{g}) &= \sum_{F\in \operatorname{SSD}(\mathbf{h}\ast\mathbf{g})} |(\mathbf{h}\ast\mathbf{g})_{F_1}|_{\diamond}\cdots  |(\mathbf{h}\ast\mathbf{g})_{F_p}|_{\diamond}  \\
    &= \sum_{\substack{D\in \operatorname{SSD}(\Sh(\mathbf{h}))\\ E\in \operatorname{SSD}(\Sh(\mathbf{g}))}} \left(      |\mathbf{h}_{D_1}|_{\diamond}\cdots|\mathbf{h}_{D_r}|_{\diamond} \right) \ast_{\diamond} \left(   |\mathbf{g}_{E_1}|_{\diamond}\cdots|\mathbf{g}_{E_s}|_{\diamond} \right) \\
    &= L_{\diamond}(\mathbf{h})\ast_{\diamond}L_{\diamond}(\mathbf{g}).
\end{align*}
Hence $L_{\diamond}$ is multiplicative.

It remains to prove surjectivity.  Let $w=a_1\cdots a_r\in\KA$ be an arbitrary word.  Let $\mathbf{h}$ be the one-column Young tableau whose entries from top to bottom are $a_1,\dots,a_r$:
\begin{align*}
    \mathbf{h}=
    \begin{ytableau}
        a_1\\
        a_2\\
        \vdots\\
        a_r
    \end{ytableau}.
\end{align*}
A one-column tableau has a unique semi-standard decomposition, namely $D_i=\{(i,1)\}$ for $1\leq i\leq r$.
Therefore $L_{\diamond}(\mathbf{h})=a_1\cdots a_r=w$. This completes the proof.
\end{proof}

\medskip
Following the above proposition, we define the quotient
\begin{align}\label{eq:defKAdia}
\Yd=\Yd(\K,\A):=\left. \Y \middle/ \ker(L_{\diamond}) \right..
\end{align}
Then \cref{prop:L-alghom} implies that
\begin{align}
    (\Yd,\ast)\cong(\KA,\ast_{\diamond}).
    \label{eq:isoYd}
\end{align}
\begin{proposition}\label{prop:Ldrt}
     Let $\mathbf{h}$ and $\mathbf{g}$ be non-empty Young tableaux. Define $\mathbf{h}|\mathbf{g}$ by attaching the left side of the bottom-left box of $\mathbf{g}$ to the right side of the top-right box of $\mathbf{h}$. Define $\frac{\mathbf{h}}{\mathbf{g}}$ by attaching the bottom of the bottom-left box of $\mathbf{g}$ to the top of the top-right box of $\mathbf{h}$. Then we have
    \begin{align}
        L_{\diamond}(\mathbf{h}\ast\mathbf{g})=L_{\diamond}(\mathbf{h}|\mathbf{g})+L_{\diamond}\left(\frac{\mathbf{h}}{\mathbf{g}}\right).
        \label{eq:Ldrt}
    \end{align}
\end{proposition}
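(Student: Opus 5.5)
We prove \eqref{eq:Ldrt} by a bijection on semi-standard decompositions, in the spirit of the proof of \cref{prop:L-alghom}. Write $x$ for the top-right box of $\mathbf{h}$ and $y$ for the bottom-left box of $\mathbf{g}$. In $\mathbf{h}\ast\mathbf{g}$ these boxes carry no row or column constraint. In $\mathbf{h}|\mathbf{g}$ they become row neighbours with $x$ to the left of $y$. In $\frac{\mathbf{h}}{\mathbf{g}}$ they become column neighbours with $y$ directly above $x$.

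The first step is to check that gluing at a single corner adds only one new adjacency. Every other box of $\mathbf{g}$ lies weakly to the right of and weakly above $y$, and every other box of $\mathbf{h}$ lies weakly left of and weakly below $x$. So the semi-standard condition on the glued shape is the union of the conditions on $\mathbf{h}$ and on $\mathbf{g}$ together with one extra inequality between the labels $T(x)$ and $T(y)$. For $\mathbf{h}|\mathbf{g}$ this inequality is $T(x)\le T(y)$; for $\frac{\mathbf{h}}{\mathbf{g}}$ it is $T(y)<T(x)$. All other inequalities implied by the glued shape follow by transitivity through $x$ and $y$, because the rows and columns of the two pieces meet only at the new edge. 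I expect this shape bookkeeping to be the main point needing care: one must verify that the glued objects are genuine skew shapes, and that no further row or column of $\mathbf{h}$ meets one of $\mathbf{g}$. This holds because the new box of the glued diagram sits in a fresh row (for $\frac{\mathbf{h}}{\mathbf{g}}$) or a fresh column (for $\mathbf{h}|\mathbf{g}$) relative to the other piece.

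The second step applies \cref{lem:SSD} to identify $\operatorname{SSD}$ of each shape with semi-standard fillings whose label set is exactly $\{1,\dots,p\}$ for some $p$. By the first step, the fillings of $\mathbf{h}\ast\mathbf{g}$ split into the disjoint union of those with $T(x)\le T(y)$ and those with $T(y)<T(x)$. These are exactly the fillings of $\mathbf{h}|\mathbf{g}$ and of $\frac{\mathbf{h}}{\mathbf{g}}$ respectively, via the identity map on boxes and labels.

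Finally, the summand attached to a decomposition $(F_1,\dots,F_p)$ in \eqref{def:Ld} is $|\cdot_{F_1}|_\diamond\cdots|\cdot_{F_p}|_\diamond$. It depends only on which entries receive which label, not on the shape, so corresponding terms are equal. Summing over the two disjoint pieces then gives $L_\diamond(\mathbf{h}\ast\mathbf{g})=L_\diamond(\mathbf{h}|\mathbf{g})+L_\diamond(\frac{\mathbf{h}}{\mathbf{g}})$. Here the left side is computed directly on the disconnected shape, and it agrees with the product of the two components by \cref{prop:L-alghom}.
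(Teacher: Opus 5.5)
Your proposal is correct and follows essentially the same route as the paper: both split $\operatorname{SSD}(\Sh(\mathbf{h}\ast\mathbf{g}))$ according to whether the label of the top-right box of $\mathbf{h}$ is at most, or strictly greater than, the label of the bottom-left box of $\mathbf{g}$. Both then identify the two pieces with $\operatorname{SSD}(\Sh(\mathbf{h}|\mathbf{g}))$ and $\operatorname{SSD}\bigl(\Sh\bigl(\frac{\mathbf{h}}{\mathbf{g}}\bigr)\bigr)$. Your extra verification, that the glued diagrams are genuine skew shapes and that gluing creates exactly one new row or column adjacency, is something the paper's proof uses only implicitly.
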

\begin{example}
    Let $\mathbf{h}=\begin{ytableau}
            a_0&a_1
\end{ytableau}$, $\mathbf{g}=\begin{ytableau}
            b_1\\
            b_0
\end{ytableau}$, then, $\mathbf{h}|\mathbf{g}=\begin{ytableau}
           \none&\none& b_1\\
            a_0&a_1&b_0
\end{ytableau}$, and $\frac{\mathbf{h}}{\mathbf{g}}=\begin{ytableau}
            \none&b_1\\
            \none&b_0\\
           a_0& a_1
\end{ytableau}$. We have
    \begin{align*}
        L_{\diamond}(\mathbf{h}\ast\mathbf{g})&=L_{\diamond}(\mathbf{h})\ast_{\diamond} L_{\diamond}(\mathbf{g})=(a_0a_1+a_0\diamond a_1)\ast_{\diamond}b_1b_0\\
        &=a_0a_1\ast_{\diamond}b_1b_0+(a_0\diamond a_1)\ast_{\diamond}b_1b_0\\
        &= \Big( a_0a_1b_1b_0 + a_0b_1a_1b_0 + b_1a_0a_1b_0 + (a_0\diamond a_1)b_1b_0 + b_1(a_0\diamond a_1)b_0 \\
        &\qquad + a_0(a_1\diamond b_1)b_0 + (a_0\diamond b_1)a_1b_0 + a_0b_1(a_1\diamond b_0) + b_1a_0(a_1\diamond b_0) \\
        &\qquad + ((a_0\diamond a_1)\diamond b_1)b_0 + (a_0\diamond b_1)(a_1\diamond b_0) + b_1((a_0\diamond a_1)\diamond b_0) \Big) \\
        &\quad + \Big( a_0b_1b_0a_1 + b_1a_0b_0a_1 + b_1b_0a_0a_1 + (a_0\diamond b_1)b_0a_1 + b_1(a_0\diamond b_0)a_1 + b_1b_0(a_0\diamond a_1) \Big)\\
        &= L_{\diamond}(\mathbf{h}|\mathbf{g}) + L_{\diamond}\left(\frac{\mathbf{h}}{\mathbf{g}}\right).
    \end{align*}
\end{example}
\begin{proof}
    By definition, $L_{\diamond}$ maps a Young tableau to a sum of words in $\KA$. The summation is indexed by the semi-standard decompositions of the tableau. The shape of the product $\mathbf{h} \ast \mathbf{g}$ is the disconnected union of $\mathbf{h}$ and $\mathbf{g}$.
    
    For any decomposition $(D_1, \dots, D_r) \in \operatorname{SSD}(\Sh(\mathbf{h} \ast \mathbf{g}))$, let the top-right box of $\mathbf{h}$ belong to the block $D_i$, and the bottom-left box of $\mathbf{g}$ belong to the block $D_j$. Since $\mathbf{h}$ and $\mathbf{g}$ are disconnected, there is no structural constraint on the relative order of the block indices $i$ and $j$. Thus, precisely one of the following two relations holds:
    
    \begin{enumerate}[(i)]
        \item $i \le j$: This means the block containing the top-right box of $\mathbf{h}$ precedes or equals the block containing the bottom-left box of $\mathbf{g}$. This is the SSD condition for two adjacent boxes in the same row. The decompositions satisfying this condition correspond to $\operatorname{SSD}(\Sh(\mathbf{h}|\mathbf{g}))$, where $\mathbf{g}$ is attached to the right of $\mathbf{h}$.
        
        \item $i > j$: This means the block containing the bottom-left box of $\mathbf{g}$ strictly precedes the block containing the top-right box of $\mathbf{h}$. This is the SSD condition for two adjacent boxes in the same column. The decompositions satisfying this condition correspond to $\operatorname{SSD}\left(\Sh(\frac{\mathbf{h}}{\mathbf{g}})\right)$, where $\mathbf{g}$ is attached above $\mathbf{h}$.
    \end{enumerate}
    
    These two mutually exclusive cases partition the set $\operatorname{SSD}(\Sh(\mathbf{h} \ast \mathbf{g}))$. Therefore, the sum of generated words in $\KA$ splits over these two sets, yielding $L_{\diamond}(\mathbf{h}\ast\mathbf{g}) = L_{\diamond}(\mathbf{h}|\mathbf{g}) + L_{\diamond}\left(\frac{\mathbf{h}}{\mathbf{g}}\right)$.
\end{proof}
\begin{proposition}\label{prop:L-Delta}
For any Young tableau $\mathbf{h}$, we have
\begin{equation*}
    (L_{\diamond}\otimes L_{\diamond})\Delta_{cut}(\mathbf{h})
    =
    \Delta_{dec}L_{\diamond}(\mathbf{h}).
\end{equation*}
\end{proposition}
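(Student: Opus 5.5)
The plan is to compare both sides term by term through a bijection between index sets. Fix a Young tableau $\mathbf{h}$ of shape $\lambda/\mu$. By \eqref{def:Ld},
\begin{align*}
\Delta_{dec}L_\diamond(\mathbf{h})=\sum_{(D_1,\dots,D_r)\in\operatorname{SSD}(\lambda/\mu)}\ \sum_{k=0}^{r}|\mathbf{h}_{D_1}|_\diamond\cdots|\mathbf{h}_{D_k}|_\diamond\otimes|\mathbf{h}_{D_{k+1}}|_\diamond\cdots|\mathbf{h}_{D_r}|_\diamond .
\end{align*}
Here the deconcatenation of a concatenation of elements of $\K\A$ splits only between the factors $|\mathbf{h}_{D_t}|_\diamond$. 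Each such factor is a $\K$-linear combination of single letters, so the word-splitting positions are exactly the indices $k$ (by linearity of $\Delta_{dec}$ in each letter).

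Expanding the left side gives
\begin{align*}
\sum_{\mu\subseteq\eta\subseteq\lambda}\ \sum_{D'\in\operatorname{SSD}(\eta/\mu)}\ \sum_{D''\in\operatorname{SSD}(\lambda/\eta)} \Bigl(\prod_t|\mathbf{h}_{D'_t}|_\diamond\Bigr)\otimes\Bigl(\prod_s|\mathbf{h}_{D''_s}|_\diamond\Bigr).
\end{align*}
Note that $w_{\eta/\mu}$ may be disconnected in $\Y$. We therefore first observe that $L_\diamond$ of a disconnected tableau, computed via \eqref{def:Ld} on the skew shape directly, agrees with the product of $L_\diamond$ of its components. This is exactly what the argument of \cref{prop:L-alghom} shows, since disconnected components impose no row or column relations. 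So $L_\diamond(w_{\eta/\mu})$ may be computed directly from $\operatorname{SSD}(\eta/\mu)$.

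The key step is the bijection
\begin{align*}
\{(D,k): D\in\operatorname{SSD}(\lambda/\mu),\,0\le k\le r\}\ \longleftrightarrow\ \{(\eta,D',D''):\mu\subseteq\eta\subseteq\lambda,\ D'\in\operatorname{SSD}(\eta/\mu),\ D''\in\operatorname{SSD}(\lambda/\eta)\}.
\end{align*}
The forward map sends a chain $\mu=\eta^{(0)}\subsetneq\cdots\subsetneq\eta^{(r)}=\lambda$ and $k$ to $\eta=\eta^{(k)}$, with $D'$ the first $k$ steps and $D''$ the remaining ones. The inverse concatenates two chains of horizontal strips through $\eta$. The empty chain conventions for $\eta=\mu$ or $\eta=\lambda$ match $k=0$ or $k=r$. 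Under this bijection the summands agree, since the blocks are literally the same sets of boxes with the same entries.

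The main obstacle I expect is the bookkeeping around $\Y$ versus raw shapes. The coproduct is defined on representatives, and the cut pieces $w_{\eta/\mu}$ are identified in $\Y$ with products of shifted components. So we must confirm that $L_\diamond$ is well defined on $\Y$, namely invariant under shifting and compatible with $\con$. Shift invariance is clear, since SSDs depend only on the relative positions of boxes. The compatibility with $\con$ is the multiplicativity argument above.

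Once this is settled, the identity follows for single tableaux, and extends by linearity. It also extends multiplicatively, since both sides are algebra maps by \cref{prop:L-alghom} and Hoffman's bialgebra result.
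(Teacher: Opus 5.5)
Your proposal is correct and takes essentially the same approach as the paper. Both expand the two sides and match terms through the bijection $(D,k)\mapsto\bigl(\eta^{(k)},(D_1,\dots,D_k),(D_{k+1},\dots,D_r)\bigr)$, whose inverse concatenates the two chains of horizontal strips through $\eta$. Your extra remarks on the well-definedness of $L_\diamond$ on $\Y$ (shift invariance and compatibility with $\con$) are left implicit in the paper, but they are correct and do no harm.
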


\begin{proof}
Let $\Sh(\mathbf{h})=\lambda/\mu$. We use the convention that the empty decomposition contributes the empty
word $\mathbf{1}$. First, applying $\Delta_{cut}$ and then $L_{\diamond}\otimes L_{\diamond}$
gives
\begin{equation*}
\begin{aligned}
    &(L_{\diamond}\otimes L_{\diamond})\Delta_{cut}(\mathbf{h}) = \sum_{\mu\subseteq\eta\subseteq\lambda} L_{\diamond}(\mathbf{h}_{\eta/\mu})\otimes L_{\diamond}(\mathbf{h}_{\lambda/\eta})  \\
   &=\sum_{\mu\subseteq\eta\subseteq\lambda}\left(\sum_{(E_1,\dots,E_p)\in\operatorname{SSD}(\eta/\mu)}|\mathbf{h}_{E_1}|_{\diamond}\cdots|\mathbf{h}_{E_p}|_{\diamond} \right)\otimes\left(\sum_{(F_1,\dots,F_s)\in\operatorname{SSD}(\lambda/\eta)}|\mathbf{h}_{F_1}|_{\diamond}\cdots|\mathbf{h}_{F_s}|_{\diamond}\right).
\end{aligned}
\end{equation*}
On the other hand, applying $L_{\diamond}$ first and then $\Delta_{dec}$ gives
\begin{equation*}
\begin{aligned}
    \Delta_{dec}L_{\diamond}(\mathbf{h})&= \Delta_{dec}\left(\sum_{(D_1,\dots,D_r)\in\operatorname{SSD}(\Sh(\mathbf{h}))}|\mathbf{h}_{D_1}|_{\diamond}\cdots|\mathbf{h}_{D_r}|_{\diamond} \right) \\
    &= \sum_{(D_1,\dots,D_r)\in\operatorname{SSD}(\Sh(\mathbf{h}))} \sum_{q=0}^{r}\left(|\mathbf{h}_{D_1}|_{\diamond}\cdots|\mathbf{h}_{D_q}|_{\diamond}\right)\otimes\left(|\mathbf{h}_{D_{q+1}}|_{\diamond}\cdots|\mathbf{h}_{D_r}|_{\diamond} \right).
\end{aligned}
\end{equation*}
It remains to identify the terms in these two expansions. Let $(D_1,\dots,D_r)\in\operatorname{SSD}(\Sh({\mathbf{h}}))$ and choose a split position $q$ with $0\leq q\leq r$. By \cref{def:SSD}, this decomposition is represented by a chain
\begin{equation*}
    \mu=\eta^{(0)} \subsetneq\eta^{(1)}  \subsetneq \cdots \subsetneq  \eta^{(r)}  = \lambda
\end{equation*}
such that
\begin{equation*}
    D_t
    =
    D\bigl(\eta^{(t)}/\eta^{(t-1)}\bigr)
    \qquad
    (1\leq t\leq r).
\end{equation*}
Set $\eta:=\eta^{(q)}$. Then $\mu\subseteq\eta\subseteq\lambda$. The first part of the chain gives $(D_1,\dots,D_q)\in\operatorname{SSD}(\eta/\mu)$. The second part gives  $(D_{q+1},\dots,D_r)\in\operatorname{SSD}(\lambda/\eta)$. Therefore, each term in the expansion of $\Delta_{dec}L_{\diamond}(\mathbf{h})$ determines a unique term in the expansion of $(L_{\diamond}\otimes L_{\diamond})\Delta_{cut}(\mathbf{h})$.

Conversely, fix a partition $\eta$ with $\mu\subseteq\eta\subseteq\lambda$. Also fix decompositions $(E_1,\dots,E_p)\in\operatorname{SSD}(\eta/\mu)$ and $(F_1,\dots,F_s)\in\operatorname{SSD}(\lambda/\eta)$. Concatenating the two corresponding chains gives a chain from $\mu$ to $\lambda$. Hence
\begin{equation*}
    (E_1,\dots,E_p,F_1,\dots,F_s)\in\operatorname{SSD}(\lambda/\mu)=\operatorname{SSD}(\Sh({\mathbf{h}})),
\end{equation*}
with split position $p$. This construction is inverse to the construction above. Therefore, the corresponding tensor factors are identical, which proves the proposition.
\end{proof}

\cref{prop:L-alghom} and \cref{prop:L-Delta} imply the following result. 
\begin{theorem}
$(\Yd,\ast,\Delta_{cut})\cong(\KA,\ast_{\diamond},\Delta_{dec})$ as Hopf algebras.
\end{theorem}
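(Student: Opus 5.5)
The plan is to show that $\ker(L_\diamond)$ is a Hopf ideal of $(\Y,\ast,\Delta_{cut})$. Then $\Yd$ inherits a Hopf algebra structure, and the map $\overline{L}_\diamond:\Yd\to\KA$ induced by $L_\diamond$ is a bijective bialgebra homomorphism, hence a Hopf isomorphism.

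By \cref{prop:L-alghom}, $L_\diamond$ is a surjective algebra homomorphism. Therefore $I:=\ker(L_\diamond)$ is an ideal and $\overline{L}_\diamond:(\Yd,\ast)\to(\KA,\ast_\diamond)$ is an algebra isomorphism, as recorded in \eqref{eq:isoYd}. For the counit, note that $L_\diamond$ preserves the grading: a tableau with $n\ge1$ boxes maps to a sum of nonempty words. Hence $\epsilon_{\KA}\circ L_\diamond=\epsilon$, where $\epsilon_{\KA}$ is the counit of $\KA$. It follows that $\epsilon(I)=0$.

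The main step is the coideal property $\Delta_{cut}(I)\subseteq I\otimes\Y+\Y\otimes I$. By \cref{prop:L-Delta}, extended linearly, we have $(L_\diamond\otimes L_\diamond)\circ\Delta_{cut}=\Delta_{dec}\circ L_\diamond$ on all of $\Y$. So for $x\in I$ we get $(L_\diamond\otimes L_\diamond)\Delta_{cut}(x)=0$, and it remains to identify $\ker(L_\diamond\otimes L_\diamond)$ with $I\otimes\Y+\Y\otimes I$. Over a field this is standard. Over a general commutative ring $\K$, I would use the fact that $\KA$ is a free $\K$-module with basis the words. Then $L_\diamond$ is a split surjection: the one-column tableau map $s:a_1\cdots a_r\mapsto$ (column with entries $a_1,\dots,a_r$) is a $\K$-linear section, as in the surjectivity part of \cref{prop:L-alghom}. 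This gives a splitting $\Y\cong I\oplus s(\KA)$ as $\K$-modules. Tensoring the split exact sequence $0\to I\to\Y\to\KA\to0$ with itself then shows $\ker(L_\diamond\otimes L_\diamond)=I\otimes\Y+\Y\otimes I$. This verification of the kernel of a tensor product is the only point where care is needed, and the splitting handles it.

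Consequently $I$ is a bi-ideal, and $\Yd$ is a bialgebra with $\Delta_{cut}$ and $\epsilon$ descending. Moreover $\overline{L}_\diamond$ intertwines the coproducts and counits by \cref{prop:L-Delta} and the counit computation above. A bijective bialgebra map is a bialgebra isomorphism. For the antipode, two options are available. One is to note that $\Yd$ remains connected graded, since $I$ is homogeneous because $L_\diamond$ is graded, so it has a unique antipode. The other is that any bialgebra morphism between Hopf algebras automatically commutes with antipodes. Either way the antipode $S$ descends to $\Yd$ and is carried to the antipode of $(\KA,\ast_\diamond,\Delta_{dec})$. This yields the stated Hopf algebra isomorphism.
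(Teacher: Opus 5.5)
Your argument is correct and follows essentially the paper's route. Both proofs rest on \cref{prop:L-alghom} and \cref{prop:L-Delta}, descend $\Delta_{cut}$ and $\epsilon$ to $\Yd$, and obtain the antipode by transport along the bijective bialgebra map $\overline L_\diamond$.

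The only real difference is how the coproduct descends. You prove the coideal property $\Delta_{cut}(I)\subseteq I\otimes\Y+\Y\otimes I$ by splitting $L_\diamond$ with the column section. The paper instead writes $L_\diamond\otimes L_\diamond=(\overline L_\diamond\otimes\overline L_\diamond)\circ(\pi\otimes\pi)$, with $\pi\colon\Y\to\Yd$ the projection, and notes that $\overline L_\diamond\otimes\overline L_\diamond$ is an isomorphism; hence $(\pi\otimes\pi)\circ\Delta_{cut}$ kills $I$. Your splitting is valid but not needed. For any surjection $\pi$ of $\K$-modules, right exactness of $\otimes$ already gives $\ker(\pi\otimes\pi)=\operatorname{im}(I\otimes\Y)+\operatorname{im}(\Y\otimes I)$, so no special care over a general ring $\K$ is required.

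You should drop your first option for the antipode, because it is false. $L_\diamond$ is not graded for the box grading, and $I=\ker L_\diamond$ is not homogeneous. By the hook formula with $l=2$, the element $\{h\}^{(2)}-\{h\}^{(1,1)}-\{h^{\diamond 2}\}^{(1)}$ lies in $I$. Its degree-$2$ component, however, maps to $h\diamond h$, which is nonzero, e.g.\ for the harmonic product. So $\Yd$ is not connected graded in the box grading; this reflects the fact that $\ast_\diamond$ does not preserve word length. Only the weaker fact you use for the counit is true: nonempty tableaux map to combinations of nonempty words.

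Your second option does work, provided you first note that $\Yd$ is a Hopf algebra. This holds because $\overline L_\diamond$ is a bialgebra isomorphism onto a Hopf algebra, so $\overline L_\diamond^{-1}\circ S_{\KA}\circ\overline L_\diamond$ is an antipode for $\Yd$. Then $\pi$ is a bialgebra map between Hopf algebras, so $\pi\circ S=S_{\Yd}\circ\pi$. This gives $S(I)\subseteq I$, i.e.\ $I$ really is a Hopf ideal.
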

\begin{proof}
Write $\pi\colon\Y\to\Yd$ for the projection. By \cref{prop:L-alghom}, $L_\diamond$ induces an algebra isomorphism $\overline L_\diamond\colon(\Yd,\ast)\to(\KA,\ast_\diamond)$ with $L_\diamond=\overline L_\diamond\circ\pi$; cf.\ \eqref{eq:isoYd}.

First, $\Delta_{cut}$ descends to $\Yd$. The map $\overline L_\diamond\otimes\overline L_\diamond$ is an isomorphism, with inverse $\overline L_\diamond^{-1}\otimes\overline L_\diamond^{-1}$. Moreover, $L_\diamond\otimes L_\diamond=(\overline L_\diamond\otimes\overline L_\diamond)\circ(\pi\otimes\pi)$. Let $x\in\ker L_\diamond$. By \cref{prop:L-Delta}, $(L_\diamond\otimes L_\diamond)\Delta_{cut}(x)=\Delta_{dec}L_\diamond(x)=0$. Since $\overline L_\diamond\otimes\overline L_\diamond$ is injective, $(\pi\otimes\pi)\Delta_{cut}(x)=0$. Hence $(\pi\otimes\pi)\circ\Delta_{cut}$ vanishes on $\ker L_\diamond$. It induces a map $\overline\Delta_{cut}\colon\Yd\to\Yd\otimes\Yd$ with $\overline\Delta_{cut}\circ\pi=(\pi\otimes\pi)\circ\Delta_{cut}$.

Second, $\overline\Delta_{cut}$ corresponds to $\Delta_{dec}$. Indeed,
\begin{align*}
    (\overline L_\diamond\otimes\overline L_\diamond)\circ\overline\Delta_{cut}\circ\pi
    =(L_\diamond\otimes L_\diamond)\circ\Delta_{cut}
    =\Delta_{dec}\circ L_\diamond
    =\Delta_{dec}\circ\overline L_\diamond\circ\pi.
\end{align*}
Since $\pi$ is surjective, $(\overline L_\diamond\otimes\overline L_\diamond)\circ\overline\Delta_{cut}=\Delta_{dec}\circ\overline L_\diamond$.

Third, the counits correspond. Let $\epsilon_{\KA}$ be the counit of $(\KA,\Delta_{dec})$; it sends $\mathbf 1$ to $1$ and every non-empty word to $0$. Let $w$ be a non-empty Young tableau. Every word in $L_\diamond(w)$ has length at least $1$, so $\epsilon_{\KA}(L_\diamond(w))=0=\epsilon(w)$. Also $\epsilon_{\KA}(L_\diamond(\mathbf 1))=1=\epsilon(\mathbf 1)$. By linearity, $\epsilon_{\KA}\circ L_\diamond=\epsilon$. In particular, $\epsilon$ vanishes on $\ker L_\diamond$ and induces $\overline\epsilon$ on $\Yd$ with $\epsilon_{\KA}\circ\overline L_\diamond=\overline\epsilon$.

Therefore $\overline L_\diamond$ transports $(\ast,\overline\Delta_{cut},\overline\epsilon)$ to $(\ast_\diamond,\Delta_{dec},\epsilon_{\KA})$. The latter triple is a bialgebra, hence so is the former, and $\overline L_\diamond$ is an isomorphism of bialgebras. Finally, $(\KA,\ast_\diamond,\Delta_{dec})$ is a Hopf algebra. Transporting its antipode along $\overline L_\diamond$ gives the antipode on $\Yd$. Hence $\overline L_\diamond$ is an isomorphism of Hopf algebras.
\end{proof}
It is interesting to determine the kernel of $L_{\diamond}$. We do not describe the full kernel. Instead, we focus on two useful families: the hook formula below and the Jacobi--Trudi formula in the next subsection.

\begin{theorem}[Hook formula]
Let $h\in\A$ and $l\in\Z_{\geq1}$. Let $\{h\}^{\lambda/\mu}\in\Y$ denote a Young tableau of shape $\lambda/\mu$ whose entries are all $h$. Then
    \begin{align*}
        L_\diamond\!\left(\,\{h^{\diamond l}\}^{(1)}\,\right) =\sum_{\substack{a+b=l,\\a\geq1,\,b\geq0}}(-1)^bL_\diamond\!\left(\{h\}^{(a,\{1\}^b)}\right).
    \end{align*}
\end{theorem}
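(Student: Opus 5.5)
The plan is to prove the identity inside $(\KA,\ast_\diamond)$ by a telescoping argument built from \cref{prop:Ldrt}. A hook $\{h\}^{(a,\{1\}^b)}$ with $a\ge1$ arises by gluing a row and a column. Write $R_a:=\{h\}^{(a)}$ for the one-row tableau and $C_b:=\{h\}^{(\{1\}^b)}$ for the one-column tableau. Apply \cref{prop:Ldrt} to $\mathbf{h}=R_{a}$ and $\mathbf{g}=C_{b}$ with $C_b$ turned so that it hangs below. More precisely, I will use the transposed version of the gluing: consider the disjoint union of the column $C_b$ and the row $R_a$. The SSD condition between the top box of the column and the leftmost box of the row splits into two cases. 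If the first box of the row is strictly before the column top, then the column lies below that box. If it is weakly after, then the row is attached to the right of the column's top box.

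This gives the analogue of \eqref{eq:Ldrt}
\begin{align*}
L_\diamond(R_a\ast C_b)=L_\diamond\bigl(\{h\}^{(a,\{1\}^{b})}\bigr)+L_\diamond\bigl(\{h\}^{(a+1,\{1\}^{b-1})}\bigr),\qquad b\ge1 .
\end{align*}
In the first term the row's first box sits on top of the column, so the shape is the hook with arm $a$ and leg $b$. In the second term the column's top box is the first box of a row of length $a+1$, leaving leg $b-1$. I would verify this case split exactly as in the proof of \cref{prop:Ldrt}: the only new constraint is the one between the two glued boxes, and it is either weak (row-type) or strict (column-type). Since the row is weakly increasing, comparing with its first box suffices. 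This checking step, making sure that attaching along the first box of the row and the top box of the column really reproduces the hook with no further constraints, is where I expect to need the most care.

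Next I evaluate the pieces in $\KA$. For constant entries, $L_\diamond(C_b)=h^b$ is the word $hh\cdots h$. Also $L_\diamond(R_a)=\sum_{\text{compositions }(c_1,\dots,c_r)\text{ of }a} h^{\diamond c_1}\cdots h^{\diamond c_r}$, which is the analogue of a complete homogeneous symmetric function. Rather than computing these directly, I will telescope. Set $H_{a,b}:=L_\diamond(\{h\}^{(a,\{1\}^b)})$ and $P_{a,b}:=L_\diamond(R_a)\ast_\diamond L_\diamond(C_b)$. The displayed relation says $P_{a,b}=H_{a,b}+H_{a+1,b-1}$ for $b\ge1$. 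Then the alternating sum $\sum_{a+b=l,a\ge1}(-1)^bH_{a,b}$ equals $\sum_{a+b=l,\,a\ge1,\,b\ge1}(-1)^bP_{a,b}+(\text{boundary terms})$. After the telescoping, the right-hand side of the theorem becomes
\begin{align*}
\sum_{b=0}^{l-1}(-1)^b\,L_\diamond(R_{l-b})\ast_\diamond h^{b}.
\end{align*}
The boundary term $H_{l,0}=L_\diamond(R_l)$ comes with sign $+$. I still need to check that the sign convention makes this collapse cleanly, but the standard Newton-type identity $\sum_b(-1)^b e_b h_{l-b}=0$ indicates the structure.

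It remains to show that $\sum_{b=0}^{l}(-1)^b L_\diamond(R_{l-b})\ast_\diamond L_\diamond(C_b)$ equals $h^{\diamond l}$ with suitable conventions. Here $R_0=C_0=\mathbf 1$, and the $b=l$ term must be handled separately because $a\ge1$. The cleanest route is to pass through the embedding philosophy: row words are complete-homogeneous-type and column words are elementary-type. The classical identity $p_l=\sum_{a+b=l,a\ge1}(-1)^b\, s_{(a,1^b)}$ is exactly the statement. To stay inside this paper, however, I would prove it directly. I will use the recursion obtained by splitting off the last block of a row decomposition together with the quasi-shuffle recursion \eqref{eq:quasishuffle} applied to $w\ast_\diamond h^b$. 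In the alternating sum, all words of length $\ge2$ cancel in pairs via the sign-reversing involution that moves the final letter of the column into or out of the last merged block. The only survivor is the single-letter term $h^{\diamond l}$, identified by \cref{lem:full-merge}. Constructing this involution explicitly is the main obstacle. If it becomes messy, I would instead prove the equivalent generating-function identity $\bigl(\sum_a L_\diamond(R_a)t^a\bigr)\ast_\diamond\bigl(\sum_b(-1)^bh^bt^b\bigr)=1$ and differentiate logarithmically, in the style of Hoffman--Ihara.
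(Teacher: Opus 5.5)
\textbf{There is a genuine gap: the telescoping step is miscomputed, and the identity you then set out to prove is false.}

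\emph{The gluing step is fine.} Your relation $P_{a,b}=H_{a,b}+H_{a+1,b-1}$ for $a,b\ge1$ is correct. It is literally \cref{prop:Ldrt} with $\mathbf h=C_b$ and $\mathbf g=R_a$, so no transposed variant is needed.

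\emph{The telescoping is where it fails.} Because the signs alternate, consecutive hooks \emph{cancel} in $\sum_b(-1)^bP_{l-b,b}$. One gets $\sum_{b=1}^{l-1}(-1)^bP_{l-b,b}=(-1)^{l-1}H_{1,l-1}-H_{l,0}$, so all interior hooks are lost. Using $P_{l,0}=H_{l,0}$ and $P_{0,l}=h^l=H_{1,l-1}$, the full sum $\sum_{b=0}^{l}(-1)^bP_{l-b,b}$ vanishes. This is exactly the analogue of the relation $\sum_b(-1)^be_bh_{l-b}=0$ that you quote.

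Consequently the expression you reduce to, $\sum_{b=0}^{l-1}(-1)^bL_\diamond(R_{l-b})\ast_\diamond h^b$, equals $(-1)^{l-1}h^l$. For $l\ge2$ this is not $h^{\diamond l}$. Already for $l=2$ it is $(hh+h^{\diamond2})-(2hh+h^{\diamond2})=-hh$. So the identity in your last paragraph is false, and the sign-reversing involution you describe cannot exist.

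Inverting the gluing relation correctly gives $H_{l-b,b}=\sum_{j=0}^{b}(-1)^{b-j}P_{l-j,j}$, hence
\[
\sum_{b=0}^{l-1}(-1)^bH_{l-b,b}=\sum_{j=0}^{l-1}(-1)^j\,(l-j)\,L_\diamond(R_{l-j})\ast_\diamond h^{j}.
\]
This is the analogue of $p_l=\sum_j(-1)^j(l-j)h_{l-j}e_j$. The multiplicities $l-j$ are the missing idea.

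\textbf{How to complete your route.} With this correction your approach can be finished without division. Put $\mathcal R(t)=\sum_{a\ge0}L_\diamond(R_a)t^a$, $\mathcal C(t)=\sum_{b\ge0}h^bt^b$ and $\mathcal Q(t)=\sum_{i\ge1}h^{\diamond i}t^i$.
\begin{itemize}
\item The vanishing above says $\mathcal R(t)\ast_\diamond\mathcal C(-t)=1$.
\item The column Newton identity \cref{expHIresult} gives $t\frac{d}{dt}\mathcal C(-t)=-\mathcal Q(t)\ast_\diamond\mathcal C(-t)$.
\item Differentiating the first relation then yields $t\mathcal R'(t)\ast_\diamond\mathcal C(-t)=\mathcal Q(t)$. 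Its coefficient of $t^l$ is the weighted sum displayed above.
\end{itemize}
Your fallback of ``differentiating logarithmically'' needs exactly this Newton input; $\mathcal R(t)\ast_\diamond\mathcal C(-t)=1$ alone does not determine the hook sum.

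Alternatively, the classical identity $p_l=\sum_b(-1)^bs_{(l-b,1^b)}$ is available non-circularly later in the paper: apply $L_\diamond\circ\iota_h$ using \cref{Thm:iota} and \cref{cor:powersum}.

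\textbf{Comparison with the paper.} Either repair is a detour compared with the paper's direct count. Fix a composition $(c_1,\dots,c_r)$ of $l$. The corner of the hook must lie in $D_1$, and the leg labels form an arbitrary $b$-subset of $\{2,\dots,r\}$. So the word $h^{\diamond c_1}\cdots h^{\diamond c_r}$ appears with total coefficient $\sum_b(-1)^b\binom{r-1}{b}=\delta_{r,1}$.
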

\begin{proof}
    Since all entries are $h$, a semi-standard decomposition of a hook shape into $r$ blocks $D_1,\dots,D_r$ corresponds to a word $h^{\diamond c_1}\cdots h^{\diamond c_r}$ in the quasi-shuffle algebra $\KA$, where $c_t=|D_t|\ge1$ and $\sum_{t=1}^r c_t=l$.

    Consider the hook shape $(l-b,\{1\}^b)$ with leg length $b$. In a semi-standard decomposition, the induced filling is weakly increasing along the row and strictly increasing along the column. The corner box (the intersection of the row and the column) is the minimal box of the column, so it must be assigned the minimum block label $1$; that is, it belongs to $D_1$. The $b$ leg boxes lie in a strictly increasing column, so they carry $b$ distinct labels, all greater than $1$, i.e.\ a $b$-subset $S\subseteq\{2,\dots,r\}$.

    Choose the labels of the $b$ leg boxes as a $b$-subset $S\subseteq\{2,\dots,r\}$. Then the whole filling is forced. In the first row, the boxes are filled from left to right by the blocks in increasing label order. Each block $D_t$ receives its prescribed number of arm boxes ($c_t$ if $t\notin S$, and $c_t-1$ if $t\in S$, the remaining one box of $D_t$ being the leg box). The weakly increasing condition along each row uniquely determines their positions. Hence, for fixed $r$ and composition $(c_1,\dots,c_r)$, the decompositions are in bijection with the $b$-subsets $S\subseteq\{2,\dots,r\}$, giving exactly $\binom{r-1}{b}$ of them.

    Exchanging the order of summation, we rearrange the alternating sum on the right-hand side according to the number of blocks $r$ in the generated words:
    \begin{align*}
        \sum_{b=0}^{l-1} (-1)^b L_{\diamond}\left(\{h\}^{(l-b,\{1\}^b)}\right)
        &= \sum_{r=1}^{l} \left( \sum_{c_1+\dots+c_r=l} h^{\diamond c_1}\cdots h^{\diamond c_r} \right) \sum_{b=0}^{r-1} (-1)^b \binom{r-1}{b} \\
        &= \sum_{r=1}^{l} \left( \sum_{c_1+\dots+c_r=l} h^{\diamond c_1}\cdots h^{\diamond c_r} \right) \delta_{r,1}.
    \end{align*}
    By the binomial theorem, the inner sum is $\sum_{b=0}^{r-1} (-1)^b \binom{r-1}{b} = (1-1)^{r-1}$, which vanishes for all $r>1$ and equals $1$ when $r=1$.

    Thus, the only surviving term corresponds to $r=1$, i.e., all boxes belong to a single block, and the generated word is $h^{\diamond l}=L_{\diamond}\left(\{h^{\diamond l}\}^{(1)}\right)$. This completes the proof.
\end{proof}

\subsection{Jacobi--Trudi formula for the Young tableaux algebra}\label{sec:JTFY}
We now study the Jacobi--Trudi formula in the Young tableaux algebra. This formula is the analogue of the classical Jacobi--Trudi formula for Schur polynomials and Schur multiple zeta values. We also give its image in the quasi-shuffle algebra under the linearization map. First, for a set $X$, define
\[T^{\text{diag}}(X)=\{T=(t_{i,j})\in\YT(X)|t_{i,j}=t_{k,l}\text{ if }i-j=k-l\}.\]
An element $T$ of this set is called a diagonal constant Young tableau. Its entries are constant on each diagonal. In the following, we will use \textbf{$a_n:=t_{i,i+n}\,\,(n\in\Z)$} to denote the diagonal entries. 
\begin{example}
    {
    \begin{align*}
       T=
\begin{ytableau}
            \none&t_{1,2}&t_{1,3}&t_{1,4}\\
            \none&t_{2,2}&t_{2,3}\\
            t_{3,1}&t_{3,2}
\end{ytableau}\,=\begin{ytableau}
            \none&a_1&a_2&a_3\\
            \none&a_0&a_1\\
            a_{-2}&a_{-1}
\end{ytableau}\,
    \end{align*}}
\end{example}

\begin{definition}\label{def:JTF}
    Let $\mathbf{h}=(h_{i,j})\in\Ydiag$ be a diagonal constant Young tableau, and write $h_{i,i+n}=a_n$ for $n\in\Z$. Assume that $\Sh(\mathbf{h})=\lambda/\mu$, where $\lambda=(\lambda_1,\dots,\lambda_r)$ and $\mu=(\mu_1,\dots,\mu_r)$. Let $s:=\lambda_1$, and let $\lambda'=(\lambda'_1,\dots,\lambda_s')$ and $\mu'=(\mu'_1,\dots,\mu'_s)$ be the conjugates of $\lambda$ and $\mu$. We allow zero parts in $\lambda'$ and $\mu'$. We define \emph{{the Jacobi--Trudi function $J$}} as the $\K$-linear map $J:\Ydiag\to\Ydiag$,
    \begin{align}
        J(\mathbf{h})=\det\left[\scalebox{0.8}{$
\begin{array}{|c|}
\hline
a_{-\mu'_j + j - 1} \\
\hline
a_{-\mu'_j + j - 2} \\
\hline
\vdots \\
\hline
a_{-\mu'_j+j-(\lambda'_i-\mu'_j-i+j)} \\
\hline
\end{array}$}
\right]_{1\leq i,j\leq s}. 
    \label{eq:JTfun}
    \end{align}
    Here, $a_{-\mu'_j+j-1}\cdots a_{-\mu'_j+j-(\lambda'_i-\mu'_j-i+j)}=\mathbf 1$ if $\lambda'_i-\mu'_j-i+j=0$ and $0$ if $\lambda'_i-\mu'_j-i+j<0$. For the empty tableau, we use the convention that the empty determinant equals $\mathbf 1$, so $J(\mathbf 1)=\mathbf 1$.
\end{definition}
\begin{remark}
    The string $a_{-\mu'_j+j-1}\cdots a_{-\mu'_j+j-(\lambda'_i-\mu'_j-i+j)}$ denotes the single-column Young tableau with these entries from top to bottom. A position $(i,j)$ may call for entries $a_n$ on diagonals that do not meet $\lambda/\mu$; we set such matrix entries to $0$. Such positions never contribute to the determinant: every permutation product containing one of them also contains an entry of negative length, hence vanishes.
\end{remark}

\begin{lemma}\label{rm:JTf}
The Jacobi--Trudi map $J:\Ydiag\to\Ydiag$ is a $\K$-algebra homomorphism.
\end{lemma}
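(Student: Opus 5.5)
The plan is to reduce the claim to the multiplicativity of $J$ on products of diagonal constant tableaux. Once $J(\mathbf 1)=\mathbf 1$ and $\K$-linearity are in place, this is a statement about block structure of Jacobi--Trudi matrices.

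Since $\Ydiag$ is spanned by diagonal constant tableaux $\mathbf h$, and $J$ is defined on each such tableau through its shape and diagonal entries, it suffices to show $J(\mathbf h\ast\mathbf g)=J(\mathbf h)\ast J(\mathbf g)$. Here $\mathbf h\ast\mathbf g$ should be read as a single diagonal constant tableau of disconnected shape. The first point to settle is well-definedness of $J$ on $\Y$, because the same element has many representatives, as the example $\mathbf h_1,\mathbf h_2$ shows. I will therefore first prove the following:
\begin{enumerate}[(a)]
\item the determinant \eqref{eq:JTfun} is unchanged when a tableau is replaced by a translate that keeps it diagonal constant;
\item for a tableau whose shape splits into two pieces occupying disjoint rows and columns (one strictly to the upper right of the other), the determinant factors as the product of the determinants of the two pieces.
\end{enumerate}
Iterating (b) over $\con(\mathbf h)$ shows that $J(\mathbf h)=\prod_{\mathbf i\in\con(\mathbf h)}J(\mathbf i)$. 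This gives both well-definedness and multiplicativity at once.

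For (a), a horizontal shift of the whole shape by $k$ columns adds $k$ leading columns to both $\lambda'$ and $\mu'$, with $\lambda'_j=\mu'_j=\text{depth}$. It also shifts the diagonal labels $a_n\mapsto a_{n+k}$. The matrix acquires an upper-left block whose diagonal entries have length $0$, hence equal $\mathbf 1$. Entries below this block have negative length, so they are $0$. The determinant therefore reduces to the lower-right block, which after reindexing is the old matrix. The indices $-\mu'_j+j-t$ are exactly compatible with the relabeling. Vertical shifts are handled the same way: they change $\lambda'_i,\mu'_j$ by a constant, and the index $-\mu'_j+j$ shifts consistently with the relabeling of diagonals. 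Empty rows or columns contribute identity blocks.

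For (b), suppose the shape is $\lambda/\mu$ with the first $c$ columns holding the lower piece and the remaining columns holding the upper piece, where every box of the upper piece lies above every box of the lower piece. For $i\le c<j$, the length of entry $(i,j)$ is $\lambda'_i-\mu'_j-i+j$. For $i>c\ge j$, the length is $\lambda'_i-\mu'_j-i+j$ with $\lambda'_i\le\mu'_j$, since column $i$ of the upper piece ends above where the lower piece starts. Hence $\lambda'_i-\mu'_j-i+j\le j-i<0$, so that block vanishes. The matrix is block upper triangular, and the determinant is the product of the diagonal blocks. Each diagonal block is the Jacobi--Trudi matrix of the corresponding piece, after the reindexing of (a), which absorbs the offsets in $\lambda',\mu'$ and in the diagonal labels.

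The main obstacle is the bookkeeping in (b): one must check that the off-diagonal block really has negative lengths, using the disconnection condition $\mu_j\ge\lambda_{j+1}$ transported to conjugates. One must also check that the diagonal block for the upper piece matches the Jacobi--Trudi matrix of its shifted representative. I would first verify this on a two-box example and then argue with the general index inequality. Multiplicativity of $\det$ uses commutativity of $\ast$.
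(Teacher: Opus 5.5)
Your proposal is correct and follows essentially the same route as the paper. The Jacobi--Trudi matrix of a disconnected diagonal constant tableau is block upper triangular, since $\lambda'_i\le\mu'_j$ whenever column $i$ lies right of the split and column $j$ left of it; its diagonal blocks are the matrices of the components; and shifting a component does not change the determinant. The paper differs only in presentation. It treats all components at once, putting each empty column into the interval on its left, instead of iterating a two-piece split. It also handles empty columns explicitly through the chain $\lambda'_i\le\lambda'_{c+1}\le\mu'_c\le\mu'_j$, which is the clean way to make your phrase ``column $i$ ends above where the lower piece starts'' rigorous.
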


\begin{proof}
It is enough to check the defining relation $\mathbf h=\prod_{\mathbf c\in\operatorname{SC}(\mathbf h)}\mathbf c$ for a diagonal constant tableau $\mathbf h$. Let $\mathbf h$ have shifted connected components $\mathbf h^{(1)},\dots,\mathbf h^{(r)}$. Each component occupies consecutive columns. This splits $\{1,\dots,s\}$ into consecutive intervals. Put each empty column into the interval on its left. Keep rows and columns in their natural order. Then the Jacobi--Trudi matrix of $\mathbf h$ is block upper triangular. Indeed, let $i$ lie in a later interval than $j$. Let $c+1$ be the first column of the interval of $i$. Then $j\leq c<i$. If column $c$ is empty, then $\lambda'_{c+1}\leq\lambda'_c=\mu'_c$. If not, then columns $c$ and $c+1$ meet different components. So they share no row, and $\lambda'_{c+1}\leq\mu'_c$. In both cases $\lambda'_i\leq\lambda'_{c+1}\leq\mu'_c\leq\mu'_j$. Since $i>j$, this gives $\lambda'_i-\mu'_j-i+j<0$. So the $(i,j)$ entry is $0$. The diagonal blocks are the Jacobi--Trudi matrices of the components. Shifting a component changes neither the lengths $\lambda'_i-\mu'_j-i+j$ nor the diagonal labels of the entries. The determinant of a block upper triangular matrix is the product of the determinants of the diagonal blocks. Hence $J(\mathbf h)=\prod_{k=1}^r J(\mathbf h^{(k)})$. Therefore $J$ respects the defining relations of $\Ydiag$ and extends to a $\K$-algebra homomorphism.
\end{proof}

\begin{proposition}
    Let $\mathbf{h}$ and $\mathbf{g}$ be diagonal constant Young tableaux. With the notations in \cref{prop:Ldrt}, we have
    \begin{align}
        J(\mathbf{h}\ast\mathbf{g})=J(\mathbf{h}|\mathbf{g})+J\left(\frac{\mathbf{h}}{\mathbf{g}}\right).
        \label{eq:Jrt}
    \end{align}
\end{proposition}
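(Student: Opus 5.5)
The identity will follow from multilinearity of the determinant in \eqref{eq:JTfun}, after placing the three tableaux $\mathbf h\ast\mathbf g$, $\mathbf h|\mathbf g$ and $\frac{\mathbf h}{\mathbf g}$ inside one common coordinate frame. Since the statement presupposes that all three are diagonal constant, the diagonal labels $a_n$ of $\mathbf h$ and $\mathbf g$ are compatible under both gluings. Let $\mathbf h$ occupy columns $1,\dots,p$ and let its top-right box be $(x,p)$. I realize $\mathbf h\ast\mathbf g$ as the skew tableau in which $\mathbf g$ sits strictly above and to the right of $\mathbf h$, with the bottom-left box of $\mathbf g$ at $(x-1,p+1)$. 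This is the diagonally adjacent position, so the diagonal labels agree with those of both glued tableaux, and by \cref{rm:JTf} the determinant of this representative equals $J(\mathbf h)\ast J(\mathbf g)$.
\begin{itemize}
\item Moving $\mathbf g$ down by one row gives $\mathbf h|\mathbf g$, with the bottom-left box of $\mathbf g$ at $(x,p+1)$.
\item Moving $\mathbf g$ left by one column gives $\frac{\mathbf h}{\mathbf g}$, with the bottom-left box of $\mathbf g$ at $(x-1,p)$.
\end{itemize}
Both moves preserve diagonals, so every box keeps its label $a_n$.

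First, I compare the conjugate data. Passing from $\mathbf h\ast\mathbf g$ to $\mathbf h|\mathbf g$ changes only one quantity: $\mu'_{p+1}$ decreases by one. Consequently only column $p+1$ of the Jacobi--Trudi matrix changes. In that column every length $\lambda'_i-\mu'_{p+1}-i+p+1$ grows by one, and since the entries start at $a_{-\mu'_{p+1}+p}$, each column tableau acquires one extra box at its top. Passing to $\frac{\mathbf h}{\mathbf g}$ instead, columns $p$ and $p+1$ merge: the first column of $\mathbf g$ is stacked on the last column of $\mathbf h$, so the matrix has one less column. I then reindex so that all three matrices have the same size. I would pad $\frac{\mathbf h}{\mathbf g}$ with a trivial column carrying $\mathbf 1$ on the diagonal, which is allowed by the conventions for empty columns used in the proof of \cref{rm:JTf}.

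The key step is to show that, after this reindexing, the three matrices agree in all columns but one. The differing column should satisfy
\[
\text{col}(\mathbf h\ast\mathbf g)=\text{col}(\mathbf h|\mathbf g)-\text{col}\bigl(\tfrac{\mathbf h}{\mathbf g}\bigr),
\]
possibly only after an elementary column operation with coefficients $\pm\mathbf 1$, which does not change the determinant. Additivity of $\det$ in that column then gives $J(\mathbf h\ast\mathbf g)=J(\mathbf h|\mathbf g)+J(\frac{\mathbf h}{\mathbf g})$, where the sign is absorbed by the transposition of columns $p$ and $p+1$ used in the reindexing.

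Concretely, I expect the needed relation to be entrywise, by comparing column tableaux. The entry of $\mathbf h|\mathbf g$ in column $p+1$ is the column tableau of $\mathbf h\ast\mathbf g$ with one extra top box $a_{-\mu'_{p+1}+p}$. The entry of $\frac{\mathbf h}{\mathbf g}$ in the merged column is exactly this same column tableau, now indexed as column $p$. So the first-row and $j=p$ data should match after the swap, and the remaining rows should be forced by the block structure. For rows $i\le p$ and $i>p$, I would invoke the block upper triangularity from the proof of \cref{rm:JTf} to see which entries vanish. The point is that the $(i,p+1)$ entries of $\mathbf h\ast\mathbf g$ with $i\le p$ have negative length in the right places.

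The main obstacle is this index bookkeeping. One must verify, row by row, that the lengths $\lambda'_i-\mu'_j-i+j$ in the three matrices line up exactly under the column swap, including the boundary rows where a length passes from $-1$ to $0$, that is, from $0$ to $\mathbf 1$. I would first check the identity on the smallest case, $\mathbf h=\mathbf g$ a single box. There it reads $J(\text{box}\ast\text{box})=J(\text{row of }2)+J(\text{column of }2)$, and one can confirm the $2\times2$ determinant and fix the signs. I would then treat the general case by restricting attention to the $2\times2$ minor formed by columns $p,p+1$ and Laplace-expanding along those two columns. This reduces the general case to the same computation.
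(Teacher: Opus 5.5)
There is a genuine gap. The step you call ``key'' is only conjectured, and the bookkeeping you give in support of it is wrong.

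\textbf{What fails.} Sliding $\mathbf g$ down one row does not change only $\mu'_{p+1}$. It raises $\lambda'_j$ and $\mu'_j$ by one for \emph{every} column $j$ of $\mathbf g$. No change of a single $\mu'_j$ can relate two tableaux with the same number of boxes, since that number is $\sum_j(\lambda'_j-\mu'_j)$. Consequently all the cross lengths $\lambda'_i-\mu'_j-i+j$ between the $\mathbf h$-block and the $\mathbf g$-block move, not a single column.

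Already for two single boxes, with entry $a$ for $\mathbf h$ and $b$ for $\mathbf g$, your claims fail. Your product frame is $(2,1)/(1)$; your $\mathbf h|\mathbf g$ frame is $(2,2)/(2)$.
\begin{itemize}
\item The $(2,1)$-entry goes from length $-1$ (so $0$) to length $0$ (so $\mathbf 1$). The essential change therefore sits in column $p$.
\item In column $p+1$, the $(1,2)$-length drops from $3$ to $2$, and the $(2,2)$-length stays $1$. No entry ``acquires an extra top box.''
\end{itemize}
Also, $(x-1,p+1)$ lies on diagonal $p-x+2$, whereas in both gluings the bottom-left box of $\mathbf g$ lies on diagonal $p-x+1$. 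So the moves do not preserve diagonals. The proposed column identity, the sign ``absorbed by a transposition,'' and the reduction via a Laplace expansion along columns $p,p+1$ are therefore unsupported.

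\textbf{What is missing.} Read off the structure of the Jacobi--Trudi matrix $E$ of $\mathbf h|\mathbf g$ directly, listing the $m=p$ columns of $\mathbf h$ first.
\begin{itemize}
\item Its diagonal blocks are the matrices $M$ and $N$ of $\mathbf h$ and $\mathbf g$.
\item In the lower-left block ($i\ge m+1$, $j\le m$) we have $\lambda'_i\le\lambda'_{m+1}=x$ and $\mu'_j\ge\mu'_m=x-1$. So every length is at most $1+j-i\le 0$, with equality only at $(m+1,m)$. The block is therefore $0$ except for a single $\mathbf 1$ at $(m+1,m)$.
\item Deleting row $m+1$ and column $m$ gives exactly the matrix of $\frac{\mathbf h}{\mathbf g}$. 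Sliding $\mathbf g$ one step up and one step left lowers its $\lambda'$, its $\mu'$ and its column indices by one, which leaves all lengths and top labels unchanged.
\end{itemize}
Your multilinearity idea then works, but in column $m$. Write that column as the same column with its $(m+1,m)$-entry replaced by $0$, plus the unit column at row $m+1$. This gives
\[
J(\mathbf h|\mathbf g)=\det E=\det M\,\det N+(-1)^{2m+1}\det E_{m+1,m}=J(\mathbf h\ast\mathbf g)-J\bigl(\tfrac{\mathbf h}{\mathbf g}\bigr),
\]
where \cref{rm:JTf} handles the block upper triangular term. This is precisely the paper's Laplace-expansion argument.
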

\begin{proof}
    Let $M$ be the $m\times m$ matrix corresponding to $J(\mathbf{h})$, and let $N$ be the $n\times n$ matrix corresponding to $J(\mathbf{g})$. Then the matrix $E$ corresponding to $J(\mathbf{h}|\mathbf{g})$ takes the block form
    \begin{align}
        E=
        \left(
        \begin{array}{c|c}
          M & \sim \\ \hdashline
          0' & N
        \end{array}
        \right).
    \end{align}
    Here, the block $0'$ is an $n \times m$ matrix whose entries are all zero except for its top-right entry, which is $1$. The coordinates of this element $1$ in the full matrix $E$ are $(m+1, m)$. 
    Furthermore, $J\left(\frac{\mathbf{h}}{\mathbf{g}}\right)$ is the minor $E_{m+1,m}$ of $E$ corresponding to this element $1$. Therefore, by Laplace expansion, we have
    \begin{align}
        \det M \times \det N + (-1)^{2m+1}\det E_{m+1,m} = \det E.
        \label{eq:LpE}
    \end{align}
    Since $J:\Ydiag\to\Ydiag$ is an algebra homomorphism, we have $J(\mathbf{h}\ast\mathbf{g}) = \det M \times \det N$. Thus, equation \eqref{eq:LpE} implies \eqref{eq:Jrt}.
\end{proof}

\begin{theorem}\label{Thm:JTf-homo&LdJ}
The Jacobi--Trudi function $J:\Ydiag\to\Ydiag$ is a Hopf algebra homomorphism. Moreover,  $J(\mathbf{h})-\mathbf{h}\in \ker L_{\diamond}$ for every diagonal constant Young tableau $\mathbf{h}\in\Ydiag$, i.e.,
\begin{align}
    L_{\diamond}(\mathbf{h})=\det\left[a_{-\mu'_j+j-1}\cdots a_{-\mu'_j+j-(\lambda'_i-\mu'_j-i+j)}\right]_{1\leq i,j\leq s}.
    \label{eq:JTf}
\end{align}
\end{theorem}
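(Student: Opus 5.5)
The plan has two parts. First I would show that $J$ is compatible with $\Delta_{cut}$. Then I would use this, together with \cref{prop:L-Delta}, to reduce the identity $L_\diamond(J(\mathbf h))=L_\diamond(\mathbf h)$ to a statement about primitive elements.

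\emph{Step 1: $J$ is a coalgebra map.}
By \cref{rm:JTf}, $J$ is an algebra homomorphism, and $\Delta_{cut}$ is an algebra homomorphism. So it suffices to prove
\[
(J\otimes J)\,\Delta_{cut}(\mathbf h)=\Delta_{cut}(J(\mathbf h))
\]
for a connected diagonal constant $\mathbf h$ of shape $\lambda/\mu$. The counit and antipode then follow automatically in a connected graded setting.

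Each matrix entry of \eqref{eq:JTfun} is a one-column tableau $C=a_{k-1}a_{k-2}\cdots a_{k-n}$. Its cutting coproduct is
\[
\Delta_{cut}(C)=\sum_{p=0}^{n} a_{k-1}\cdots a_{k-p}\otimes a_{k-p-1}\cdots a_{k-n},
\]
which is again a pair of column words, labelled on consecutive diagonals. This is exactly the shape of the coproduct of $e_n$, with the diagonal labels carried along.

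Since $\Delta_{cut}$ is multiplicative, $\Delta_{cut}(J(\mathbf h))$ is the determinant of the matrix of these coproducts, computed in the commutative algebra $\Ydiag\otimes\Ydiag$. I would write this matrix as a product $A\cdot B$, summed over an intermediate index $q$ that plays the role of the conjugate $\eta'_q$ of the cut shape. Here $A$ has entries $(\text{top column})\otimes 1$ and $B$ has entries $1\otimes(\text{bottom column})$, and the labels match because the diagonal index $-\eta'_q+q$ is shared. I would then apply the Cauchy--Binet formula. The $s\times s$ minors of $A$ and $B$, indexed by strictly increasing sequences $\eta'_q-q$, are $J(\mathbf h_{\eta/\mu})$ and $J(\mathbf h_{\lambda/\eta})$. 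This is the classical derivation of $s_{\lambda/\mu}(x,y)=\sum_\eta s_{\eta/\mu}(x)s_{\lambda/\eta}(y)$.

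\emph{Main obstacle.} I expect Step 1 to be the hardest part. The difficulty is checking that the minors vanish unless $\mu\subseteq\eta\subseteq\lambda$ is a genuine partition chain, and that the diagonal labels match exactly. If Cauchy--Binet proves awkward, the fallback is a Lindstr\"om--Gessel--Viennot bijection, in which each column word is a lattice path, the cut is a horizontal line, and $\Delta_{cut}$ splits the non-intersecting path families.

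\emph{Step 2: $L_\diamond\circ J=L_\diamond$.}
Set $\Phi=L_\diamond\circ J-L_\diamond\colon\Ydiag\to\KA$. Both $L_\diamond\circ J$ and $L_\diamond$ are algebra homomorphisms, so $\Phi$ vanishes on all of $\Ydiag$ as soon as it vanishes on connected tableaux, and indeed on all diagonal constant tableaux. I would prove $\Phi(\mathbf h)=0$ by induction on the number of boxes.

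By Step 1 and \cref{prop:L-Delta}, both $L_\diamond\circ J$ and $L_\diamond$ intertwine $\Delta_{cut}$ with $\Delta_{dec}$. All proper cut pieces of $\mathbf h$ have fewer boxes, so by the induction hypothesis
\[
\Delta_{dec}\,\Phi(\mathbf h)=\Phi(\mathbf h)\otimes\mathbf 1+\mathbf 1\otimes\Phi(\mathbf h).
\]
The primitive elements of $(\KA,\Delta_{dec})$ of positive degree are exactly $\K\A$, so $\Phi(\mathbf h)$ is a linear combination of single letters. It therefore remains only to compare word-length-$1$ parts.

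The length-$1$ part of $L_\diamond(\mathbf h)$ is $|\mathbf h|_\diamond$ if $\lambda/\mu$ is a horizontal strip, and $0$ otherwise. On the other side, a product of column words under $\ast_\diamond$ has a length-$1$ part only when every column has length $1$, in which case \cref{lem:full-merge} gives the $\diamond$-product of the letters. So only those permutations in the determinant whose factors all have length $\le 1$ contribute, and I would check that their signed sum reproduces $|\mathbf h|_\diamond$ for horizontal strips and $0$ otherwise.

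For a connected shape with at least two boxes, the horizontal strip case is a single row. There the determinant is the matrix with ones on the subdiagonal, and only the identity-type term survives; for non-strips the relevant length $\le1$ terms cancel in pairs. Alternatively, I would handle this base case using \eqref{eq:Ldrt} and \eqref{eq:Jrt} to peel off boxes. This closes the induction and gives \eqref{eq:JTf}.
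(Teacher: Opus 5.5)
Your route is the paper's own. Its Steps 1--3 are a hands-on Cauchy--Binet expansion of $\Delta_{cut}(J(\mathbf h))$ over the cut contents: a sign-reversing involution removes repeated cut contents, and grouping by the set $D=\{d_1<\dots<d_s\}$ of cut contents, with $d_r=-\eta'_r+r-1$, yields $\det(A_D)\otimes\det(B_D)$. Its Step 4 is exactly your primitivity argument. However, the points you defer are where the work lies. Write $\alpha_j=-\mu'_j+j-1$ and $\beta_i=-\lambda'_i+i$; both sequences are strictly increasing. For Step 1 you still need two facts.
\begin{enumerate}[(a)]
\item $\det(A_D)=0$ unless $d_r\le\alpha_r$ for all $r$, and $\det(B_D)=0$ unless $d_r\ge\beta_r-1$ for all $r$. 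If $d_k>\alpha_k$, monotonicity makes every entry of $A_D$ in rows $r\ge k$ and columns $j\le k$ vanish, and $k$ columns cannot be matched into the $k-1$ remaining rows.
\item $\det(A_D)=J(\mathbf h_{\eta/\mu})$, even though the latter is a determinant of size $\eta_1$, possibly $<s$. Here the last $s-\eta_1$ rows of $A_D$ vanish in the first $\eta_1$ columns and are unitriangular in the remaining ones.
\end{enumerate}

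The genuine gap is the word-length-$1$ comparison in Step 2 for shapes that are not horizontal strips. You assert that ``the relevant length $\le1$ terms cancel in pairs'' but give no pairing, and in fact there is nothing to pair. Your fallback via \eqref{eq:Ldrt} and \eqref{eq:Jrt} does not close this either. Those identities relate $\mathbf h|\mathbf g$ to $\frac{\mathbf h}{\mathbf g}$, which have the same number of boxes, so they give no induction. Moreover, a general connected shape (already a $2\times2$ square) is not two pieces glued along a single edge. The missing idea is that $\alpha_j-\beta_{\sigma(j)}+1\in\{0,1\}$ for all $j$ forces $\sigma=\mathrm{id}$, for every shape. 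Let $p$ be minimal with $\sigma(p)\neq p$, and let $\sigma(q)=p$; then $\sigma(p)>p$ and $q>p$. Strict monotonicity gives
$\alpha_p+1\le\alpha_q\le\beta_p<\beta_{p+1}\le\beta_{\sigma(p)}\le\alpha_p+1$, a contradiction. The identity term has factor lengths equal to the column heights $\lambda'_j-\mu'_j$. So for a non-strip there is no length-$1$ contribution at all, and for a strip \cref{lem:full-merge} gives $|\mathbf h|_\diamond$. This also makes your reduction to connected shapes and the single-row Hessenberg computation unnecessary.
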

\begin{proof}
Let $\Sh(\mathbf{h})=\lambda/\mu$ and $s:=\lambda_1$. Let $\lambda'=(\lambda'_1,\dots,\lambda'_s)$ and $\mu'=(\mu'_1,\dots,\mu'_s)$ be the conjugates of $\lambda$ and $\mu$ (zero parts are allowed). For $1\le i,j\le s$ define
\begin{align*}
    \alpha_j := -\mu'_j+j-1,
    \qquad
    \beta_i  := -\lambda'_i+i,
    \qquad
    m_{i,j}  := \alpha_j-\beta_i+1 = \lambda'_i-\mu'_j-i+j.
\end{align*}
The $(i,j)$-entry of the Jacobi--Trudi matrix is the column tableau $T_{i,j}(\mathbf{h})$. Read from top to bottom, its entries are $a_{\alpha_j}, a_{\alpha_j-1}, \dots, a_{\beta_i}$. This tableau has $m_{i,j}$ boxes. We use the conventions $T_{i,j}(\mathbf{h})=\mathbf{1}$ if $m_{i,j}=0$ and $T_{i,j}(\mathbf{h})=0$ if $m_{i,j}<0$. After applying $L_{\diamond}$, the column $T_{i,j}(\mathbf{h})$ becomes the word
\begin{align*}
    w_{i,j} := L_{\diamond}(T_{i,j}(\mathbf{h}))
             = a_{\alpha_j}a_{\alpha_j-1}\cdots a_{\beta_i} \in \KA.
\end{align*}
Since $\mu'$ and $\lambda'$ are weakly decreasing, both sequences
$(\alpha_j)_{j=1}^s$ and $(\beta_i)_{i=1}^s$ are strictly increasing:
\begin{align*}
    \alpha_{j+1}-\alpha_j = 1+\mu'_j-\mu'_{j+1} \ge 1,
    \qquad
    \beta_{i+1}-\beta_i = 1+\lambda'_i-\lambda'_{i+1} \ge 1.
\end{align*}
The proof proceeds in four steps. In Steps 1--3, we show that $J$ is a coalgebra homomorphism. In Step 4, we deduce the identity $L_{\diamond}(J(\mathbf{h}))=L_{\diamond}(\mathbf{h})$.

\medskip
\noindent\textbf{Step 1.}
We expand $\Delta_{cut}(J(\mathbf{h}))$ by cutting each column factor of the Jacobi--Trudi determinant independently.

For integers $p,q$, let $\mathsf C[p,q]$ denote the column Young tableau whose entries, read from top to bottom, are $a_p,a_{p-1},\dots,a_q$. As in \cref{def:JTF}, we use the conventions $\mathsf C[p,q]=\mathbf{1}$ if $q=p+1$ and $\mathsf C[p,q]=0$ if $q>p+1$. In this notation,
\begin{align*}
    T_{i,j}(\mathbf{h})=\mathsf C[\alpha_j,\beta_i].
\end{align*}
For a column tableau, the coproduct is simple because its sub-Young diagrams are the top segments of the column. Cutting $\mathsf C[p,q]$ after its $t$-th box gives the upper part $\mathsf C[p,p-t+1]$ and the lower part $\mathsf C[p-t,q]$. We call $\gamma$ the \emph{cut content}: the upper part is $\mathsf C[p,\gamma+1]$ and the lower part is $\mathsf C[\gamma,q]$, and the extreme values $\gamma=p$ and $\gamma=q-1$ give the cuts at the top and bottom edges. Then, for $p\ge q-1$,
\begin{align}
    \Delta_{cut}\mathsf C[p,q]
    =\sum_{\gamma=q-1}^{p}
     \mathsf C[p,\gamma+1]\otimes\mathsf C[\gamma,q].
    \label{eq:column-cut}
\end{align}
By definition,
\begin{align}
    \Delta_{cut}(\mathbf{h})
    = \sum_{\mu\subseteq\eta\subseteq\lambda}
      \mathbf{h}_{\eta/\mu}\otimes\mathbf{h}_{\lambda/\eta},
    \label{eq:Delta_K}
\end{align}
and
\begin{align}
    J(\mathbf{h})
    = \det\bigl(T_{i,j}(\mathbf{h})\bigr)
    = \sum_{\sigma\in S_s}\operatorname{sgn}(\sigma)
      \prod_{j=1}^{s} \mathsf C[\alpha_j,\beta_{\sigma(j)}].
    \label{eq:J_K}
\end{align}
Since $\Delta_{cut}$ is an algebra homomorphism, applying this to each term of~\eqref{eq:J_K} and using~\eqref{eq:column-cut} for each factor, we have
\begin{align}
    \Delta_{cut}(J(\mathbf{h}))
    =\sum_{\sigma\in S_s}\operatorname{sgn}(\sigma)
     \sum_{\gamma\in\Gamma(\sigma)}
     \left(\prod_{j=1}^{s}\mathsf C[\alpha_j,\gamma_j+1]\right)
     \otimes
     \left(\prod_{j=1}^{s}\mathsf C[\gamma_j,\beta_{\sigma(j)}]\right),
    \label{eq:DeltaJ-gamma}
\end{align}
where
\begin{align*}
    \Gamma(\sigma)
    :=\bigl\{\gamma=(\gamma_1,\dots,\gamma_s)\in\Z^s
      \;\big|\;
      \beta_{\sigma(j)}-1\le\gamma_j\le\alpha_j
      \text{ for all }j\bigr\}.
\end{align*}
The integer $\gamma_j$ denotes the \emph{cut content} of the $j$-th factor. Any pair $(\sigma,\gamma)$ satisfying $\gamma\in\Gamma(\sigma)$ is called \emph{admissible}. Furthermore, if $T_{\sigma(j),j}(\mathbf{h})=0$ for some $j$, we get $\Gamma(\sigma)=\varnothing$, which matches $\Delta_{cut}(0)=0$.

\medskip
\noindent\textbf{Step 2.}
It turns out that the terms in~\eqref{eq:DeltaJ-gamma} with duplicate cut contents cancel out. Assume $(\sigma,\gamma)$ is admissible, with $\gamma_p=\gamma_q$ for some $p<q$. Choosing the lexicographically smallest such pair $(p,q)$, define
\begin{align*}
    d:=\gamma_p=\gamma_q,
    \qquad
    \sigma':=\sigma\circ(p\ q),
    \qquad
    \gamma':=\gamma.
\end{align*}
To see that the new pair $(\sigma',\gamma')$ is also admissible, note that the conditions for positions $j\neq p,q$ remain unchanged. At positions $p$ and $q$, the original admissibility of $(\sigma,\gamma)$ implies
\begin{align*}
    \beta_{\sigma(p)}-1\le d\le\alpha_p,
    \qquad
    \beta_{\sigma(q)}-1\le d\le\alpha_q.
\end{align*}
Swapping $\sigma(p)$ and $\sigma(q)$ just swaps the two lower bounds and keeps the upper bounds the same,
\begin{align*}
    \beta_{\sigma'(p)}-1=\beta_{\sigma(q)}-1\le d\le\alpha_p,
    \qquad
    \beta_{\sigma'(q)}-1=\beta_{\sigma(p)}-1\le d\le\alpha_q.
\end{align*}
Hence $(\sigma',\gamma')$ is admissible.

The two terms have the same tensor factors. The upper factor $\prod_j\mathsf C[\alpha_j,\gamma_j+1]$ depends only on $\gamma$, which does not change. In the lower factor, only the factors at the positions $p$ and $q$ change: they change from $\mathsf C[d,\beta_{\sigma(p)}]$ and $\mathsf C[d,\beta_{\sigma(q)}]$ to $\mathsf C[d,\beta_{\sigma(q)}]$ and $\mathsf C[d,\beta_{\sigma(p)}]$. Since $\Y$ is commutative, the product is still the same. Moreover, $\operatorname{sgn}(\sigma')=-\operatorname{sgn}(\sigma)$.

The map $(\sigma,\gamma)\mapsto(\sigma',\gamma')$ is an involution on the set of admissible pairs with repeated cut contents. The pair $(p,q)$ depends only on $\gamma$. Since $\gamma$ remains the same, applying the map twice gives $\sigma\circ(p\ q)\circ(p\ q)=\sigma$. This map has no fixed points because $\sigma'\neq\sigma$. These paired terms have the same tensor factors but opposite signs, so they cancel out. Therefore,
\begin{align}
    \Delta_{cut}(J(\mathbf{h}))
    = \sum_{\sigma\in S_s}\operatorname{sgn}(\sigma)
      \sum_{\substack{\gamma\in\Gamma(\sigma)\\ \gamma_i \neq \gamma_k \, (i\neq k)}}
      \Biggl( \prod_{j=1}^{s}\mathsf C[\alpha_j,\gamma_j+1] \Biggr)
      \otimes
      \Biggl( \prod_{j=1}^{s}\mathsf C[\gamma_j,\beta_{\sigma(j)}] \Biggr).
    \label{eq:DeltaJ-distinct}
\end{align}

\medskip
\noindent\textbf{Step 3.}
The terms in~\eqref{eq:DeltaJ-distinct} can now be grouped according to their cut contents.

Let $D=\{d_1<d_2<\cdots<d_s\}$ be a set of $s$ integers. Define two $s\times s$ matrices with entries in $\Ydiag$ by
\begin{align*}
    A_{r,j}:=\mathsf C[\alpha_j,\,d_r+1],
    \qquad
    B_{i,r}:=\mathsf C[d_r,\,\beta_i]
    \qquad
    (1\le i,j,r\le s).
\end{align*}
If the entries of $\gamma\in\Gamma(\sigma)$ are all distinct and form the set $D$, then $\gamma_j=d_{\tau(j)}$ for a unique $\tau\in S_s$. The corresponding term in~\eqref{eq:DeltaJ-distinct} then becomes
\begin{align*}
    \operatorname{sgn}(\sigma)
    \left(\prod_{j=1}^{s}A_{\tau(j),j}\right)
    \otimes
    \left(\prod_{j=1}^{s}B_{\sigma(j),\tau(j)}\right).
\end{align*}
Conversely, let $(\sigma,\tau)\in S_s\times S_s$ be arbitrary and put $\gamma_j:=d_{\tau(j)}$. If $\gamma\notin\Gamma(\sigma)$, then either $d_{\tau(j)}>\alpha_j$ for some $j$, and then $A_{\tau(j),j}=0$, or $d_{\tau(j)}<\beta_{\sigma(j)}-1$ for some $j$, and then $B_{\sigma(j),\tau(j)}=0$. Hence extending the sum to all pairs $(\sigma,\tau)$ only adds zero terms. Now substitute $\rho:=\sigma\circ\tau^{-1}$. Then $\operatorname{sgn}(\sigma)=\operatorname{sgn}(\rho)\operatorname{sgn}(\tau)$, and reindexing the product by $r=\tau(j)$, which is allowed since $\Y$ is commutative, gives $\prod_{j}B_{\sigma(j),\tau(j)}=\prod_{r}B_{\rho(r),r}$. The total contribution of the value set $D$ therefore equals
\begin{align*}
    \sum_{\tau,\rho\in S_s}
    \operatorname{sgn}(\tau)\operatorname{sgn}(\rho)
    \left(\prod_{j=1}^{s}A_{\tau(j),j}\right)
    \otimes
    \left(\prod_{r=1}^{s}B_{\rho(r),r}\right)
    =\det(A_D)\otimes\det(B_D),
\end{align*}
and
\begin{align}
    \Delta_{cut}(J(\mathbf{h}))
    =\sum_{d_1<\cdots<d_s}\det(A_D)\otimes\det(B_D).
    \label{eq:DeltaJ-D}
\end{align}

Next, we determine which sets $D$ contribute. Define
\begin{align*}
    \eta'_r:=r-1-d_r
    \qquad
    (1\le r\le s).
\end{align*}
Since $d_{r+1}-d_r\ge1$, we have $\eta'_r-\eta'_{r+1}=d_{r+1}-d_r-1\ge0$, so $\eta'$ is weakly decreasing. Conversely, every weakly decreasing $\eta'$ arises from a unique strictly increasing $D$.

Every nonzero term of~\eqref{eq:DeltaJ-D} satisfies
    \begin{align}\label{eq:d-support}
        \beta_r-1\le d_r\le\alpha_r \qquad (1\le r\le s),
    \end{align}
meaning $\mu'_r\le\eta'_r\le\lambda'_r$ for all $r$. If $d_k>\alpha_k$ for some $k$, the increasing orderings $d_1<\cdots<d_s$ and $\alpha_1<\cdots<\alpha_s$ ensure that $d_r\ge d_k>\alpha_k\ge\alpha_j$ for all $r\ge k$ and $j\le k$. Therefore,
\begin{align*}
    A_{r,j}=\mathsf C[\alpha_j,d_r+1]=0
    \qquad
    (r\ge k,\ j\le k).
\end{align*}
In the expansion of $\det(A_D)$, the columns $1,\dots,k$ are matched with $k$ distinct rows, while only $k-1$ rows lie outside $\{k,\dots,s\}$. Hence every permutation term contains one of the zero entries above, and $\det(A_D)=0$. Suppose next that $d_k<\beta_k-1$ for some $k$. Since $d_1<\cdots<d_s$ and $\beta_1<\cdots<\beta_s$, for all $i\ge k$ and $r\le k$ we get $d_r\le d_k<\beta_k-1\le\beta_i-1$, hence
\begin{align*}
    B_{i,r}=\mathsf C[d_r,\beta_i]=0
    \qquad
    (i\ge k,\ r\le k),
\end{align*}
and the same argument gives $\det(B_D)=0$. This proves~\eqref{eq:d-support}. In particular, only finitely many sets $D$ contribute to~\eqref{eq:DeltaJ-D}, and every contributing $D$ determines a unique partition $\eta$ with
\begin{align*}
    \mu\subseteq\eta\subseteq\lambda,
    \qquad
    d_r=-\eta'_r+r-1.
\end{align*}
Conversely, every partition $\eta$ with $\mu\subseteq\eta\subseteq\lambda$ gives a strictly increasing sequence $d_r=-\eta'_r+r-1$.

It remains to identify the two determinants. Fix $\eta$ with $\mu\subseteq\eta\subseteq\lambda$ and let $D$ be the associated set. The restrictions $\mathbf{h}_{\eta/\mu}$ and $\mathbf{h}_{\lambda/\eta}$ are again diagonal constant Young tableaux, so $J$ applies to them. Since $d_r+1=-\eta'_r+r$, the entry
\begin{align*}
    A_{r,j}=\mathsf C[-\mu'_j+j-1,\,-\eta'_r+r]
\end{align*}
is the $(r,j)$-entry of the Jacobi--Trudi matrix of $\mathbf{h}_{\eta/\mu}$, and
\begin{align*}
    B_{i,r}=\mathsf C[-\eta'_r+r-1,\,-\lambda'_i+i]
\end{align*}
is the $(i,r)$-entry of the Jacobi--Trudi matrix of $\mathbf{h}_{\lambda/\eta}$. Since the outer partition of $\lambda/\eta$ is $\lambda$ and $\lambda_1=s$, it follows that
\begin{align*}
    \det(B_D)=J(\mathbf{h}_{\lambda/\eta}).
\end{align*}
For $A_D$, the Jacobi--Trudi determinant of $\mathbf{h}_{\eta/\mu}$ in \cref{def:JTF} has size $\eta_1$, which may be smaller than $s$. For $r>\eta_1$ we have $\eta'_r=0$ and hence $d_r+1=r$. If moreover $j\le\eta_1<r$, then $r>j\ge j-\mu'_j=\alpha_j+1$, so $A_{r,j}=0$. If $\eta_1<j\le s$, then $\mu'_j=0$ because $\mu_1\le\eta_1<j$, so $A_{r,j}=\mathsf C[j-1,r]$, which is $0$ for $r>j$ and $\mathbf{1}$ for $r=j$. Thus the last $s-\eta_1$ rows of $A_D$ vanish in the first $\eta_1$ columns and form an upper unitriangular block in the last $s-\eta_1$ columns. The matrix $A_D$ is block upper triangular, and the determinant of the unitriangular block is $\mathbf{1}$, so $\det(A_D)$ equals the determinant of its upper-left $\eta_1\times\eta_1$ block, that is,
\begin{align*}
    \det(A_D)=J(\mathbf{h}_{\eta/\mu}).
\end{align*}

Combining this identification with~\eqref{eq:DeltaJ-D} and~\eqref{eq:Delta_K}, we obtain
\begin{align*}
    \Delta_{cut}(J(\mathbf{h}))
    =\sum_{\mu\subseteq\eta\subseteq\lambda}
     J(\mathbf{h}_{\eta/\mu})\otimes J(\mathbf{h}_{\lambda/\eta})
    =(J\otimes J)(\Delta_{cut}(\mathbf{h})).
\end{align*}
Because $J$ preserves the grading, it commutes with the counit, making it a coalgebra homomorphism. We already know from \cref{rm:JTf} that $J$ is an algebra homomorphism, so it is a bialgebra homomorphism. Finally, because bialgebra homomorphisms between Hopf algebras always commute with antipodes, $J$ is a Hopf algebra homomorphism.
 
\medskip
\noindent\textbf{Step 4.}
Finally, we prove the compatibility with the linearization map, i.e.\ $L_{\diamond}(J(\mathbf{h}))=L_{\diamond}(\mathbf{h})$. We proceed by induction on $n:=|D(\lambda/\mu)|$.
 
\medskip
\textit{1) Base cases.} If $n=0$ then $\mathbf{h}=\mathbf{1}$, $J(\mathbf{h})=\mathbf{1}$, and both sides equal $\mathbf{1}$. If $n=1$ then $\mathbf{h}$ is a single box and $J(\mathbf{h})=\mathbf{h}$, so equality is immediate.
 
\medskip
\textit{2) Inductive step.} Assume $n\ge2$ and that $L_{\diamond}(J(\mathbf{h}'))=L_{\diamond}(\mathbf{h}')$ holds for every $\mathbf{h}'\in\Ydiag$ with $|D(\Sh(\mathbf{h}'))|<n$. The tableau $\mathbf{h}\in\Ydiag$ satisfies $h_{i,j}=a_{j-i}$. Every sub-tableau arising from $\Delta_{cut}(\mathbf{h})$ belongs to $\Ydiag$. Therefore the induction hypothesis applies to each such sub-tableau.
 
Using that $J$ is a coalgebra homomorphism (Step~3) together with \cref{prop:L-Delta}, we obtain
\begin{align}
    \Delta_{dec}(L_{\diamond}(J(\mathbf{h})))
    &= (L_{\diamond}\otimes L_{\diamond})(\Delta_{cut}(J(\mathbf{h}))) \notag\\
    &= \sum_{\mu\subseteq\eta\subseteq\lambda}
       L_{\diamond}(J(\mathbf{h}_{\eta/\mu}))\otimes L_{\diamond}(J(\mathbf{h}_{\lambda/\eta})).
    \label{eq:ddec-LJ}
\end{align}
The boundary terms $\eta=\mu$ and $\eta=\lambda$ contribute $\mathbf{1}\otimes L_{\diamond}(J(\mathbf{h}))$ and $L_{\diamond}(J(\mathbf{h}))\otimes\mathbf{1}$, respectively. For each interior term $\mu\subsetneq\eta\subsetneq\lambda$, the induction hypothesis gives $L_{\diamond}(J(\mathbf{h}_{\eta/\mu}))=L_{\diamond}(\mathbf{h}_{\eta/\mu})$ (and likewise for $\mathbf{h}_{\lambda/\eta}$), so
\begin{align}
    \Delta_{dec}(L_{\diamond}(J(\mathbf{h})))
    &= L_{\diamond}(J(\mathbf{h}))\otimes\mathbf{1}
       +\mathbf{1}\otimes L_{\diamond}(J(\mathbf{h}))
       +\sum_{\mu\subsetneq\eta\subsetneq\lambda}
        L_{\diamond}(\mathbf{h}_{\eta/\mu})\otimes L_{\diamond}(\mathbf{h}_{\lambda/\eta}).
    \label{eq:ddec-LJ-full}
\end{align}
Applying \cref{prop:L-Delta} directly to $\mathbf{h}$ yields the identical formula with $L_{\diamond}(J(\mathbf{h}))$ replaced by $L_{\diamond}(\mathbf{h})$:
\begin{align}
    \Delta_{dec}(L_{\diamond}(\mathbf{h}))
    &= L_{\diamond}(\mathbf{h})\otimes\mathbf{1}
       +\mathbf{1}\otimes L_{\diamond}(\mathbf{h})
       +\sum_{\mu\subsetneq\eta\subsetneq\lambda}
        L_{\diamond}(\mathbf{h}_{\eta/\mu})\otimes L_{\diamond}(\mathbf{h}_{\lambda/\eta}).
    \label{eq:ddec-L-full}
\end{align}
Setting $X:=L_{\diamond}(J(\mathbf{h}))-L_{\diamond}(\mathbf{h})$ and subtracting~\eqref{eq:ddec-L-full} from~\eqref{eq:ddec-LJ-full},
\begin{align}
    \Delta_{dec}(X) = X\otimes\mathbf{1}+\mathbf{1}\otimes X,
    \label{eq:X-primitive}
\end{align}
so $X$ is primitive in the deconcatenation coalgebra $(\KA,\Delta_{dec})$.
 
Write $X=\sum_{r\ge0}X_r$ with $X_r\in\KA_r$. Since $\Delta_{dec}$ is graded, each $X_r$ is primitive separately. Applying the counit $\epsilon$ to~\eqref{eq:X-primitive} gives $\epsilon(X)=0$, hence $X_0=0$. For $r\ge2$, write $X_r=\sum_{a_1,\dots,a_r}c_{a_1,\dots,a_r}\,a_1\cdots a_r$; the component of $\Delta_{dec}(X_r)$ in $\KA_1\otimes\KA_{r-1}$ is $\sum c_{a_1,\dots,a_r}\,a_1\otimes a_2\cdots a_r$, which must vanish by primitivity. Since the tensors $a_1\otimes a_2\cdots a_r$ form a basis of $\KA_1\otimes\KA_{r-1}$, every coefficient is zero, hence $X_r=0$. Therefore, $X=X_1$, and it remains to show that the word-length $1$ parts of $L_{\diamond}(J(\mathbf{h}))$ and $L_{\diamond}(\mathbf{h})$ agree.
 
Set $c_j:=\lambda'_j-\mu'_j$ (the height of column $j$) and $H:=\max_j c_j$. Since $n\ge2$, at least one column is non-empty, so $H\ge1$.

\medskip
\textit{2.1) Word-length $1$ part of $L_{\diamond}(\mathbf{h})$.} A word of length $1$ in $L_{\diamond}(\mathbf{h})$ comes from a one-block semi-standard decomposition $D_1=D(\lambda/\mu)$. Such a decomposition is valid if and only if every column of $\lambda/\mu$ has at most one box: a single block assigns the same entry to all boxes, which violates the strict column condition whenever two boxes share the same column. Hence:
\begin{itemize}
    \item If $H\ge2$: $L_{\diamond}(\mathbf{h})$ has no word-length $1$ part.
    \item If $H=1$: $\lambda/\mu$ is a horizontal strip (each column has at most one box; the shape need not be connected), and the unique one-block decomposition contributes
          \begin{align*}
              |\mathbf{h}|_{\diamond}
              := a_{\alpha_{j_1}}\diamond\cdots\diamond a_{\alpha_{j_t}},
              \qquad
              \{j_1<\cdots<j_t\} := \{j\mid c_j=1\}.
          \end{align*}
\end{itemize}

\medskip
\textit{2.2) Word-length $1$ part of $L_{\diamond}(J(\mathbf{h}))$.} Since $L_{\diamond}$ is an algebra homomorphism (\cref{prop:L-alghom}),
\begin{align}
    L_{\diamond}(J(\mathbf{h}))
    = \sum_{\sigma\in S_s}\operatorname{sgn}(\sigma)\,
      w_{\sigma(1),1}\ast_{\diamond}
      w_{\sigma(2),2}\ast_{\diamond}\cdots\ast_{\diamond}
      w_{\sigma(s),s}.
    \label{eq:Ld-J-expansion}
\end{align}
The quasi-shuffle product can merge letters from \emph{different} factors but never two letters from the \emph{same} factor, so every output word has length at least $\max_j\operatorname{len}(w_{\sigma(j),j})$. A term in~\eqref{eq:Ld-J-expansion} can therefore contribute to word length $1$ only if every factor has length $m_{\sigma(j),j}\in\{0,1\}$, i.e.\
\begin{align}
    \alpha_j \le \beta_{\sigma(j)} \le \alpha_j+1
    \qquad\text{for all }j.
    \label{eq:length-one-condition}
\end{align}
We show that~\eqref{eq:length-one-condition} forces $\sigma=\operatorname{id}$. Suppose $\sigma\neq\operatorname{id}$ and let $p$ be the smallest index with $\sigma(p)\neq p$; then $\sigma(p)>p$. Let $q>p$ be the unique index with $\sigma(q)=p$. From~\eqref{eq:length-one-condition} at $j=p$ and $\sigma(p)>p$:
\begin{align}
    \beta_{p+1}\le\beta_{\sigma(p)}\le\alpha_p+1.
    \label{eq:ineq1}
\end{align}
From~\eqref{eq:length-one-condition} at $j=q$ (so $\beta_{\sigma(q)}=\beta_p$) and $q>p$:
\begin{align}
    \alpha_p+1\le\alpha_{p+1}\le\alpha_q\le\beta_p.
    \label{eq:ineq2}
\end{align}
Combining~\eqref{eq:ineq1} and~\eqref{eq:ineq2}:
\begin{align*}
    \alpha_p+1 \le \alpha_q \le \beta_p < \beta_{p+1} \le \alpha_p+1,
\end{align*}
a contradiction. Therefore $\sigma=\operatorname{id}$.
 
For $\sigma=\operatorname{id}$ each factor has length $m_{j,j}=c_j$:
\begin{itemize}
    \item If $H\ge2$: some $c_j\ge2$, so even the identity term produces no word of length $1$, and $L_{\diamond}(J(\mathbf{h}))$ has no word-length $1$ part.
    \item If $H=1$: each $c_j\in\{0,1\}$ and the non-trivial factors are the single letters $w_{j,j}=a_{\alpha_j}$ for $j\in\{j_1,\dots,j_t\}$. By \cref{lem:full-merge}, the word-length $1$ part of $a_{\alpha_{j_1}}\ast_{\diamond}\cdots\ast_{\diamond} a_{\alpha_{j_t}}$ equals $a_{\alpha_{j_1}}\diamond\cdots\diamond a_{\alpha_{j_t}}  =|\mathbf{h}|_{\diamond}$ with coefficient $1$. Since $\operatorname{sgn}(\operatorname{id})=1$, the word-length $1$ part of $L_{\diamond}(J(\mathbf{h}))$ is $|\mathbf{h}|_{\diamond}$.
\end{itemize}
In both cases, the word-length $1$ parts of $L_{\diamond}(J(\mathbf{h}))$ and $L_{\diamond}(\mathbf{h})$ coincide. Hence $X_1=0$. Together with $X_0=0$ and $X_r=0$ for all $r\ge2$, we obtain $X=0$, i.e.\ $L_{\diamond}(J(\mathbf{h}))=L_{\diamond}(\mathbf{h})$, which gives $J(\mathbf{h})-\mathbf{h}\in\ker L_{\diamond}$. This completes the proof of \cref{Thm:JTf-homo&LdJ}.
\end{proof}

\begin{example}
    Let $\mathbf{h}\in\Ydiag$ have shape $\lambda/\mu=(6,5,5,4,3,3,1)/(3,2,1,1)$. Then $s=\lambda_1=6$, $\lambda'=(7,6,6,4,3,1)$, $\mu'=(4,2,1,0,0,0)$, and
\begin{align*}
    (\alpha_1,\dots,\alpha_6)=(-4,-1,1,3,4,5),
    \qquad
    (\beta_1,\dots,\beta_6)=(-6,-4,-3,0,2,5).
\end{align*}
\begin{center}
        \begin{tikzpicture}[scale=0.55]
            \newcommand{\uboxx}[3]{\draw (#2-1,-#1) rectangle (#2,-#1+1);
              \node at (#2-0.5,-#1+0.5) {$\scriptstyle a_{#3}$};}
            \newcommand{\lboxx}[3]{\draw (#2-1,-#1) rectangle (#2,-#1+1);
              \node at (#2-0.5,-#1+0.5) {$\scriptstyle a_{#3}$};}
           
            \uboxx{1}{4}{3}\uboxx{1}{5}{4}
            \uboxx{2}{3}{1}\uboxx{2}{4}{2}\uboxx{2}{5}{3}
            \uboxx{3}{2}{-1}\uboxx{3}{3}{0}
            \uboxx{4}{2}{-2}\uboxx{4}{3}{-1}
            \uboxx{5}{1}{-4}\uboxx{5}{2}{-3}\uboxx{5}{3}{-2}
            \uboxx{6}{1}{-5}
          
            \lboxx{1}{6}{5}
            \lboxx{3}{4}{1}\lboxx{3}{5}{2}
            \lboxx{4}{4}{0}
            \lboxx{6}{2}{-4}\lboxx{6}{3}{-3}
            \lboxx{7}{1}{-6}

            \node[anchor=west, inner sep=0pt] at (7.2,-3.2) {%
            \scalebox{1}{%
            \begingroup
            \setlength{\arraycolsep}{1.4pt}
            \renewcommand{\arraystretch}{0.78}
            \newcommand{\JTc}[2]{{\scriptstyle\mathsf C[#1,#2]}}
            \newcommand{\JTone}{{\scriptstyle\mathbf 1}}
            \newcommand{\JTzero}{{\scriptstyle 0}}
            $
            J(\mathbf h)=\det
            \left(
            \begin{array}{@{}cccccc@{}}
            \JTc{-4}{-6} & \JTc{-1}{-6} & \JTc{1}{-6} & \JTc{3}{-6} & \JTc{4}{-6} & \JTc{5}{-6} \\
            \JTc{-4}{-4} & \JTc{-1}{-4} & \JTc{1}{-4} & \JTc{3}{-4} & \JTc{4}{-4} & \JTc{5}{-4} \\
            \JTone       & \JTc{-1}{-3} & \JTc{1}{-3} & \JTc{3}{-3} & \JTc{4}{-3} & \JTc{5}{-3} \\
            \JTzero      & \JTone       & \JTc{1}{0}  & \JTc{3}{0}  & \JTc{4}{0}  & \JTc{5}{0}  \\
            \JTzero      & \JTzero      & \JTone      & \JTc{3}{2}  & \JTc{4}{2}  & \JTc{5}{2}  \\
            \JTzero      & \JTzero      & \JTzero     & \JTzero     & \JTone      & \JTc{5}{5}
            \end{array}
            \right)
            $
            \endgroup
            }%
            };
        \end{tikzpicture}
    \end{center}
Take $\sigma=(1,3,5,2,4,6)$, so $\operatorname{sgn}(\sigma)=-1$. The column factors $T_{\sigma(j),j}(\mathbf{h})=\mathsf C[\alpha_j,\beta_{\sigma(j)}]$ are
\begin{align*}
    \mathsf C[-4,-6],\quad
    \mathsf C[-1,-3],\quad
    \mathbf{1},\quad
    \mathsf C[3,-4],\quad
    \mathsf C[4,0],\quad
    \mathsf C[5,5].
\end{align*}
Cutting the $j$-th factor after its $\kappa_j$-th box gives the cut content $\gamma_j=\alpha_j-\kappa_j$.

    \medskip
\noindent (i) \emph{A canceling pair (Step 2).}
Take $\kappa=(2,3,0,7,2,0)$. Then $\gamma=(-6,-4,1,-4,2,5),$ and $\gamma_2=\gamma_4=-4$ is the unique repeated cut content, so $(p,q)=(2,4)$ and $d=-4$. The upper and lower parts of the factors are
\begin{center}
\begin{tabular}{c|c|c|c|c|c}
$j$ & $\sigma(j)$ & $T_{\sigma(j),j}$ & $\kappa_j$ &
upper $\mathsf C[\alpha_j,\gamma_j{+}1]$ & lower $\mathsf C[\gamma_j,\beta_{\sigma(j)}]$\\
\hline
$1$ & $1$ & $\mathsf C[-4,-6]$ & $2$ & $\mathsf C[-4,-5]$ & $\mathsf C[-6,-6]$\\
$2$ & $3$ & $\mathsf C[-1,-3]$ & $3$ & $\mathsf C[-1,-3]$ & $\mathbf{1}$\\
$3$ & $5$ & $\mathbf{1}$       & $0$ & $\mathbf{1}$       & $\mathbf{1}$\\
$4$ & $2$ & $\mathsf C[3,-4]$  & $7$ & $\mathsf C[3,-3]$  & $\mathsf C[-4,-4]$\\
$5$ & $4$ & $\mathsf C[4,0]$   & $2$ & $\mathsf C[4,3]$   & $\mathsf C[2,0]$\\
$6$ & $6$ & $\mathsf C[5,5]$   & $0$ & $\mathbf{1}$       & $\mathsf C[5,5]$
\end{tabular}
\end{center}
The involution of Step~2 is given by $\sigma'=\sigma\circ(2\ 4)=(1,2,5,3,4,6)$. 

Applying $\sigma'$ changes the factors at positions $2$ and $4$ to $\mathsf C[-1,-4]$ and $\mathsf C[3,-3]$. These cuts are still admissible since their cut contents agree. At position $2$, the cut splits $\mathsf C[-1,-4]$ into $\mathsf C[-1,-3]$ and $\mathsf C[-4,-4]$. At position $4$, it splits $\mathsf C[3,-3]$ into $\mathsf C[3,-3]$ and $\mathbf{1}$. All upper parts stay the same. The lower parts $\mathsf C[-4,-4]$ and $\mathbf{1}$ just switch positions, so the lower product is unaffected in the commutative algebra $\Y$. Because $\operatorname{sgn}(\sigma')=-\operatorname{sgn}(\sigma)$, the two terms cancel out. 

\medskip
\noindent (ii) \emph{A surviving term (Step 3).} Take $\kappa=(2,2,0,7,2,0)$. Then $\gamma=(-6,-3,1,-4,2,5)$ is pairwise distinct, with value set $D=\{d_1<\cdots<d_6\}=\{-6,-4,-3,1,2,5\}$. From $\gamma_j=d_{\tau(j)}$ we have
\begin{align*}
    \tau=(1,3,4,2,5,6),
    \qquad
    \rho=\sigma\circ\tau^{-1}=(1,2,3,5,4,6),
\end{align*}
with $\operatorname{sgn}(\tau)=+1$, $\operatorname{sgn}(\rho)=-1$, and indeed $\operatorname{sgn}(\sigma) =\operatorname{sgn}(\rho)\operatorname{sgn}(\tau)=-1$. The associated partition is obtained from $\eta'_r=r-1-d_r$:
\begin{align*}
    \eta'=(6,5,5,2,2,0),
    \qquad
    \eta=(5,5,3,3,3,1),
\end{align*}
and $\mu\subseteq\eta\subseteq\lambda$ holds. The upper parts
\begin{align*}
    \mathsf C[-4,-5],\quad
    \mathsf C[-1,-2],\quad
    \mathbf{1},\quad
    \mathsf C[3,-3],\quad
    \mathsf C[4,3],\quad
    \mathbf{1}
\end{align*}
form the $\tau$-term of $\det(A_D)=J(\mathbf{h}_{\eta/\mu})$, and the lower parts
\begin{align*}
    \mathsf C[-6,-6],\quad
    \mathsf C[-3,-3],\quad
    \mathbf{1},\quad
    \mathsf C[-4,-4],\quad
    \mathsf C[2,0],\quad
    \mathsf C[5,5]
\end{align*}
form the $\rho$-term of $\det(B_D)=J(\mathbf{h}_{\lambda/\eta})$. Hence this summand contributes to the term $J(\mathbf{h}_{\eta/\mu})\otimes J(\mathbf{h}_{\lambda/\eta})$ with $\eta=(5,5,3,3,3,1)$. The following figure shows the shape $\lambda/\mu$ cut along $\eta$.
\begin{center}
\begin{tikzpicture}[scale=0.55]
\newcommand{\uboxx}[3]{\draw[fill=blue!10] (#2-1,-#1) rectangle (#2,-#1+1);
  \node at (#2-0.5,-#1+0.5) {$\scriptstyle #3$};}
\newcommand{\lboxx}[3]{\draw[fill=red!15] (#2-1,-#1) rectangle (#2,-#1+1);
  \node at (#2-0.5,-#1+0.5) {$\scriptstyle #3$};}
\uboxx{1}{4}{3}\uboxx{1}{5}{4}
\uboxx{2}{3}{1}\uboxx{2}{4}{2}\uboxx{2}{5}{3}
\uboxx{3}{2}{-1}\uboxx{3}{3}{0}
\uboxx{4}{2}{-2}\uboxx{4}{3}{-1}
\uboxx{5}{1}{-4}\uboxx{5}{2}{-3}\uboxx{5}{3}{-2}
\uboxx{6}{1}{-5}
\lboxx{1}{6}{5}
\lboxx{3}{4}{1}\lboxx{3}{5}{2}
\lboxx{4}{4}{0}
\lboxx{6}{2}{-4}\lboxx{6}{3}{-3}
\lboxx{7}{1}{-6}
\draw[line width=1.6pt]
  (0,-6) -- (1,-6) -- (1,-5) -- (3,-5) -- (3,-2) -- (5,-2) -- (5,0) -- (6,0);
\node[anchor=west] at (6.8,-2.2)
  {\tikz{\draw[fill=blue!10] (0,0) rectangle (0.35,0.35);}
   \ $\mathbf{h}_{\eta/\mu}$};
\node[anchor=west] at (6.8,-3.2)
  {\tikz{\draw[fill=red!15] (0,0) rectangle (0.35,0.35);}
   \ $\mathbf{h}_{\lambda/\eta}$};
\end{tikzpicture}
\end{center}
\end{example}

\begin{remark}
    $L_{\diamond}:\Y\to\KA$ is an algebra homomorphism. Hence, $L_{\diamond}\circ J(\mathbf{h})$ is the determinant on the right-hand side of \eqref{eq:JTf}, which lies in $(\KA,\ast_{\diamond})$. Moreover, the map $J$ introduced in \cref{def:JTF} corresponds to the algebra homomorphism associated with the classical E-type Jacobi--Trudi formula, which expands the skew Young tableau in terms of columns. There are also other constructions of $J$ in the literature. Examples include the H-type Jacobi--Trudi formula and the ribbon version of H. Bachmann and S. Charlton \cite{BC}. These constructions also satisfy the compatibility relation $L_{\diamond}\circ F = L_{\diamond}$, where $F$ denotes the corresponding map. This raises the natural question of whether other such maps exist. 
   The same idea applies to the Giambelli formula for diagonal Schur multiple zeta-functions, established by K.~Matsumoto and M.~Nakasuji \cite{MN}. It induces an algebra homomorphism on $\Ydiag$ satisfying $L_{\diamond}\circ F = L_{\diamond}$. This suggests that a formal proof for this entire class of determinant identities can be conducted by relying solely on the underlying harmonic relations.
    \label{rm:JTLd}
\end{remark}
\begin{example}
    When $\Sh(\mathbf{h})=\lambda/\mu=(4,3,2)/(1,1)$, we have
    \vspace{-0.2cm}
    {\small
    \begin{align*}
        L_{\diamond}\left(\,
\ytableausetup{centertableaux, boxsize=1.5em}
\begin{ytableau}
            \none&a_1&a_2&a_3\\
            \none&a_0&a_1\\
            a_{-2}&a_{-1}
\end{ytableau}\,\right)=\det\begin{pmatrix}
    a_{-2}&a_1a_0a_{-1}a_{-2}&a_2a_1a_0a_{-1}a_{-2}&a_3a_2a_1a_0a_{-1}a_{-2}\\
    1&a_1a_0a_{-1}&a_2a_1a_0a_{-1}&a_3a_2a_1a_0a_{-1}\\
    0&a_1&a_2a_1&a_3a_2a_1\\
    0&0&1&a_3
\end{pmatrix}.
    \end{align*}}
\end{example}

\section{Multiple Schur series}\label{sec:MSS}
In this section, we construct multiple Schur series using the convolution product on the Young tableaux Hopf algebra.  Later, we show that this convolution definition is exactly the usual Schur-type summation over semi-standard Young tableaux.

\subsection{Multiple Schur series via convolution}\label{sec:MSSconvolution}
Let $\calP$ be a commutative $\K$-algebra, and let $\X=(X,\prec)$ be a finite totally ordered set.  We write $a\preceq b$ if either $a\prec b$ or $a=b$.

For a skew shape $\lambda/\mu$, define
\begin{align*}
            \SSYT(\lambda/\mu,\X)=\{ (m_{i,j}) \in      \YT(\lambda/\mu, X) \mid  m_{i,j} \preceq m_{i,j+1},  m_{i,j} \prec m_{i+1,j}\}
        \end{align*}
to be the set of fillings $(m_{i,j})_{(i,j)\in D(\lambda/\mu)}$ with entries in $X$ such that the entries weakly increase along rows and strictly increase along columns.

\begin{definition}\label{def:MSS}
   For each $m\in\X$, let $f_m: \K\A \rightarrow \calP$ be a $\K$-linear map. The associated \emph{multiple Schur series} $F_\X:\Y\to\calP$ is defined on a Young tableau $\mathbf{h}$ by
    \begin{equation}\label{eq:Schursum}
    F_\X(\mathbf{h})=\sum_{(m_{i,j})\in\SSYT(\Sh(\mathbf{h}),\X)}\prod_{(i,j)\in D(\Sh(\mathbf{h}))}f_{m_{i,j}}(h_{i,j}).
\end{equation}
For the empty tableau we set $F_\X(\mathbf{1})=1$, and if $\X=\varnothing$ we set $F_\X=\epsilon$.
\end{definition}

\begin{proposition}\label{prop:MSS-on-Y}
For any family $\{f_m\}_{m\in\X}$, the multiple Schur series $F_\X$ is a $\K$-algebra homomorphism from $(\Y,\ast)$ to $\calP$.
\end{proposition}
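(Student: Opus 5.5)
Since $\Y$ is presented as the quotient of the free commutative algebra $\K[\YT(\A)]$ by the relations $\mathbf{h}=\prod_{\mathbf{i}\in\con(\mathbf{h})}\mathbf{i}$, the plan is to define $F_\X$ on generators by \eqref{eq:Schursum}, extend it multiplicatively to $\K[\YT(\A)]$, and check that it respects these relations. Once that is done, it descends to a $\K$-algebra homomorphism $\Y\to\calP$ that agrees with \eqref{eq:Schursum} on every tableau. Linearity in each entry is automatic: each $f_m$ is $\K$-linear and every summand is a product containing exactly one factor per box. Unitality is the convention $F_\X(\mathbf{1})=1$. (If $\X=\varnothing$, then $F_\X=\epsilon$, which is an algebra homomorphism, so we may assume $\X\neq\varnothing$.)

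The key step is a factorization lemma for the semi-standard fillings. Let $\mathbf{h}$ be a Young tableau with components $\mathbf{h}'^{(1)},\dots,\mathbf{h}'^{(l)}$, obtained by cutting at the rows in $J$. I claim that restriction gives a bijection
\begin{align*}
\SSYT(\Sh(\mathbf{h}),\X)\;\longrightarrow\;\prod_{k=1}^{l}\SSYT(\Sh(\mathbf{h}^{(k)}),\X).
\end{align*}
To prove it, note that the semi-standard conditions only compare horizontally adjacent boxes $(i,j),(i,j+1)$ and vertically adjacent boxes $(i,j),(i+1,j)$. Horizontally adjacent boxes always lie in the same row, hence in the same component. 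For vertically adjacent boxes in rows $j$ and $j+1$ with $j\in J$, we would need a column $c$ with $\mu_j<c\le\lambda_{j+1}$, which is impossible since $\mu_j\geq\lambda_{j+1}$. So every constraint is internal to one component, and any tuple of componentwise fillings glues to a semi-standard filling of $\mathbf{h}$. We also need that shifting a component, left by $\min\mu$ and up by $j_{i-1}$, preserves adjacency and hence semi-standardness. This gives the identification with the shifted shapes $\Sh(\mathbf{h}^{(k)})$. Empty components, which contribute $\mathbf{1}$, correspond to singleton sets of fillings.

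With the bijection in hand, the summand $\prod_{(i,j)}f_{m_{i,j}}(h_{i,j})$ splits as a product over components, since $\calP$ is commutative. Summing over the product set then gives
\begin{align*}
F_\X(\mathbf{h})=\prod_{k=1}^{l}F_\X(\mathbf{h}^{(k)}),
\end{align*}
so the defining relations are respected. Multiplicativity $F_\X(\mathbf{h}\ast\mathbf{g})=F_\X(\mathbf{h})F_\X(\mathbf{g})$ then follows, because $\mathbf{h}\ast\mathbf{g}$ is represented by the disjoint placement of the two tableaux, whose components are $\con(\mathbf{h})\sqcup\con(\mathbf{g})$.

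The only real obstacle is the bookkeeping in the bijection: disconnected rows, empty intermediate components, and the invariance of $\SSYT$ under the shift in the definition of $\con$. The fact that no column constraint crosses a cut row is exactly the condition $\mu_j\ge\lambda_{j+1}$, so I expect this to be routine. No hypothesis on the $f_m$ is needed, which matches the statement of \cref{prop:MSS-on-Y}.
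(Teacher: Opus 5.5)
Your proposal is correct and follows essentially the same route as the paper. You both check that $F_\X$ respects the defining relations by factoring $\SSYT(\Sh(\mathbf{h}),\X)$ over the shifted connected components (no row or column constraints cross components, and shifting preserves semi-standardness), and then obtain multiplicativity from the disjoint placement of $\mathbf{h}$ and $\mathbf{g}$. Your version makes explicit the adjacency check via $\mu_j\geq\lambda_{j+1}$ and the treatment of empty components, both of which the paper leaves implicit.
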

\begin{proof}
The algebra $\Y$ has only the defining relations $\mathbf{h}=\prod_{\mathbf{i}\in\con(\mathbf{h})}\mathbf{i}$. So it is enough to check that $F_\X$ respects this relation and that $F_\X(\mathbf{h}\ast\mathbf{g})=F_\X(\mathbf{h})F_\X(\mathbf{g})$.

Let $\mathbf{h}$ be a Young tableau. Two boxes from different connected components lie in different rows and different columns, so they have no row or column relation. Shifting a component does not change its row and column relations, so it does not change its semi-standard condition. Hence, a semi-standard filling of $\mathbf{h}$ is the same as a choice of one semi-standard filling for each connected component. Thus,
\begin{equation*}
    \SSYT(\Sh(\mathbf{h}),\X)=\prod_{\mathbf{i}\in\con(\mathbf{h})}\SSYT(\Sh(\mathbf{i}),\X).
\end{equation*}
Therefore, the product in \eqref{eq:Schursum} splits over the components, and
\begin{equation*}
    F_\X(\mathbf{h})=\prod_{\mathbf{i}\in\con(\mathbf{h})}F_\X(\mathbf{i}).
\end{equation*}
So $F_\X$ respects the defining relation of $\Y$.

For $\mathbf{h},\mathbf{g}\in\Y$, the product $\mathbf{h}\ast\mathbf{g}$ places them as two disconnected components. For the same reason,
\begin{equation*}
    \SSYT(\Sh(\mathbf{h}\ast\mathbf{g}),\X)=\SSYT(\Sh(\mathbf{h}),\X)\times\SSYT(\Sh(\mathbf{g}),\X),
\end{equation*}
so $F_\X(\mathbf{h}\ast\mathbf{g})=F_\X(\mathbf{h})F_\X(\mathbf{g})$. Hence $F_\X$ is a $\K$-algebra homomorphism.
\end{proof}

\cref{prop:MSS-on-Y} shows that $F_\X$ is an algebra homomorphism on $\Y$ for any family $\{f_m\}$. We now look at the special case where each $f_m$ is a $\diamond$-algebra homomorphism. In this case, $F_\X$ has a second description: it factors through the quotient $\Yd$ and can be written as a convolution product. To set this up, we first attach to each $f_m$ a map $\widetilde f_m$ on $\Yd$. The next lemma is the first step.

\begin{lemma}\label{lem:rho-f-character}
Let $f:(\K\A,\diamond)\to\calP$ be a $\K$-algebra homomorphism.  Define a
$\K$-linear map
\begin{align*}
    \rho_f:\KA\longrightarrow\calP
\end{align*}
by
\begin{align*}
    \rho_f(\mathbf{1})=1,
    \qquad
    \rho_f(x)=f(x)\quad (x\in \K\A),
    \qquad
    \rho_f(w)=0\quad \text{otherwise.}
\end{align*}
Then $\rho_f:(\KA,\ast_\diamond)\longrightarrow\calP$ is a $\K$-algebra homomorphism.
\end{lemma}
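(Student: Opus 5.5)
Since $\rho_f$ is $\K$-linear and $\ast_\diamond$ is bilinear, it suffices to check $\rho_f(u\ast_\diamond v)=\rho_f(u)\rho_f(v)$ for words $u,v\in\A^\ast$. The unit condition holds because $\rho_f(\mathbf 1)=1$ by definition. The plan is to sort words by length, using that $\rho_f$ kills every word of length $\ge 2$. It then remains to track only the length-$0$ and length-$1$ parts of a quasi-shuffle product.

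\textbf{Case 1: one of $u,v$ is empty.} Then $\mathbf 1\ast_\diamond v=v$ and the identity is trivial.

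\textbf{Case 2: both are nonempty, with lengths $r,s\ge 1$.} The right-hand side is $\rho_f(u)\rho_f(v)$, which is nonzero only when $r=s=1$. For the left-hand side we need the length-$1$ part of $u\ast_\diamond v$; its length-$0$ part is zero, since every word in the product is nonempty. I claim every word occurring in $u\ast_\diamond v$ has length at least $\max\{r,s\}$. This holds because the quasi-shuffle only merges a letter of $u$ with a letter of $v$, never two letters of the same factor. The same observation was used in the proof of \cref{lem:full-merge} and in Step 4 of \cref{Thm:JTf-homo&LdJ}. Formally, it follows by induction on $r+s$ from the recursion \eqref{eq:quasishuffle}: each of the three terms has a leading letter followed by a quasi-shuffle of total length reduced by one or two, and the lower bound propagates.

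Hence if $\max\{r,s\}\ge 2$, every word in $u\ast_\diamond v$ has length $\ge 2$, so $\rho_f(u\ast_\diamond v)=0=\rho_f(u)\rho_f(v)$. If $r=s=1$, say $u=a$ and $v=b$, then
\begin{align*}
a\ast_\diamond b=ab+ba+a\diamond b .
\end{align*}
Applying $\rho_f$ kills $ab$ and $ba$ and gives $f(a\diamond b)$. Since $f$ is a $\K$-algebra homomorphism for $\diamond$, this equals $f(a)f(b)=\rho_f(a)\rho_f(b)$. One small point: $a\diamond b$ lies in $\K\A$, a linear combination of letters, so $\rho_f$ acts on it as $f$ by linearity.

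The only step requiring care is the length lower bound, but it is routine given the recursion. I also note that $f$ need not preserve a unit: $(\K\A,\diamond)$ may be nonunital, and $\rho_f(\mathbf 1)=1$ is imposed separately. So only multiplicativity of $f$ is used.
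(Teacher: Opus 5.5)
Your proof is correct and follows the paper's argument essentially step for step: reduce to words, use that the quasi-shuffle never merges two letters of the same factor (so every word in $u\ast_\diamond v$ has length at least $\max\{\ell(u),\ell(v)\}$), and handle the single-letter case via $a\ast_\diamond b=ab+ba+a\diamond b$ and the multiplicativity of $f$. Your induction on $r+s$ for the length bound, together with the remark that only multiplicativity of $f$ is used, makes explicit what the paper states without proof.
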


\begin{proof}
It is enough to check the identity on words.  Let $u,v\in\KA$ be words. If one of $u$ and $v$ is the empty word, the claim is immediate.  Suppose that both are non-empty.  The quasi-shuffle product never merges two letters coming from the same word.  Hence every word appearing in $u\ast_\diamond v$ has length at least $\max\{\ell(u),\ell(v)\}$.
    
Suppose that either $u$ or $v$ has length at least $2$. Then every word in $u\ast_\diamond v$ has length at least $2$. Hence both sides of $\rho_f(u\ast_\diamond v)=\rho_f(u)\rho_f(v)$ are zero.

It remains to consider the case where $u=x$ and $v=y$ are both of word length $1$, with $x,y\in\K\A$. For letters $a,b\in \A$,
\begin{align*}
     a\ast_\diamond b = ab+ba+(a\diamond b).
\end{align*}
The first two terms have word length $2$ and vanish under $\rho_f$. Therefore
\begin{align*}
     \rho_f(a\ast_\diamond b) =\rho_f(a\diamond b)  = f(a\diamond b)  = f(a)f(b)   = \rho_f(a)\rho_f(b),
\end{align*}
because $f$ is an algebra homomorphism for the product $\diamond$. Since the quasi-shuffle product is bilinear, this proves the lemma.
\end{proof}

\begin{definition}\label{def:ftilde-new}
For a $\K$-algebra homomorphism $f:(\K\A,\diamond)\to\calP$, define $\widetilde f:\Yd\longrightarrow\calP$ by
\begin{align*}
    \widetilde f:=\rho_f\circ \overline L_\diamond,
\end{align*}
where $\overline L_\diamond:\Yd\longrightarrow\KA$ is the algebra isomorphism induced by $L_\diamond$.
\end{definition}

\begin{corollary}\label{lem:fm}
The map $\widetilde f:\Yd\to\calP$ is a $\K$-algebra homomorphism.
\end{corollary}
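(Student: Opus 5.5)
The corollary follows by composing two homomorphisms we already have. By definition $\widetilde f=\rho_f\circ\overline L_\diamond$, so it suffices to know that each factor is a $\K$-algebra homomorphism.

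First, the induced map $\overline L_\diamond\colon(\Yd,\ast)\to(\KA,\ast_\diamond)$ is an algebra isomorphism. This is \eqref{eq:isoYd}, obtained from \cref{prop:L-alghom}. Concretely, $L_\diamond$ is a surjective algebra homomorphism, so $\ker L_\diamond$ is an ideal, the quotient $\Yd$ inherits the product $\ast$, and the factorization $L_\diamond=\overline L_\diamond\circ\pi$ makes $\overline L_\diamond$ multiplicative and unital. Second, \cref{lem:rho-f-character} gives that $\rho_f\colon(\KA,\ast_\diamond)\to\calP$ is a $\K$-algebra homomorphism, because $f$ is a $\diamond$-algebra homomorphism.

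The composite of two $\K$-linear, multiplicative, unital maps is again of that kind. On units we have $\widetilde f(\mathbf 1)=\rho_f(\overline L_\diamond(\mathbf 1))=\rho_f(\mathbf 1)=1$, since $L_\diamond(\mathbf 1)=\mathbf 1$. On products we have
\[
\widetilde f(x\ast y)=\rho_f\bigl(\overline L_\diamond(x)\ast_\diamond\overline L_\diamond(y)\bigr)=\widetilde f(x)\,\widetilde f(y).
\]
Linearity over $\K$ is clear.

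There is no real obstacle here. The only point to check is that the unit conventions match, which the computation above already covers. The substantive input, that $\rho_f$ respects $\ast_\diamond$, was handled in \cref{lem:rho-f-character}.
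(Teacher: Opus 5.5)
Your proposal is correct and matches the paper's proof: $\widetilde f=\rho_f\circ\overline L_\diamond$ is a composite of the algebra homomorphism $\overline L_\diamond$ (from \cref{prop:L-alghom}) and the algebra homomorphism $\rho_f$ (from \cref{lem:rho-f-character}). Your explicit check of the unit, $\widetilde f(\mathbf 1)=1$, is a small addition that the paper leaves implicit.
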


\begin{proof}
The map $\overline L_\diamond$ is a $\K$-algebra homomorphism by \cref{prop:L-alghom}, and $\rho_f$ is a $\K$-algebra homomorphism by \cref{lem:rho-f-character}. Hence, their composition is a $\K$-algebra homomorphism.
\end{proof}
\begin{remark}\label{rem:ftilde-explicit}
For a Young tableau $\mathbf{h}$ viewed in $\Yd$, the map $\widetilde f$ has the following explicit form:
\begin{align*}
    \widetilde f(\mathbf{h}) =
    \begin{cases}
        1,
        & \mathbf{h}=\mathbf{1} \text{ is an empty tableau},\\[2mm]
        \displaystyle
        \prod_{(i,j)\in D(\Sh(\mathbf{h}))} f(h_{i,j}),
        & \Sh(\mathbf{h})\text{ is a horizontal strip},\\[4mm]
        0,
        & \text{otherwise}.
    \end{cases}
\end{align*}
Indeed, $\rho_f$ detects only the length $0$ and length $1$ parts of $L_\diamond(\mathbf{h})$. A word of length $1$ in $L_\diamond(\mathbf{h})$ can only come from a one-block semi-standard decomposition. Such a decomposition exists if and only if $\Sh(\mathbf{h})$ is a horizontal strip: a single block fills every box with the same entry, which is semi-standard exactly when no column contains two boxes. When $\Sh(\mathbf{h})$ is a horizontal strip, its contribution is
\begin{equation*}
     |\mathbf{h}|_\diamond := \mathop{\diamond}_{(i,j)\in D(\Sh(\mathbf{h}))} h_{i,j},
\end{equation*}
and applying $f$ gives the displayed product because $f$ is an algebra homomorphism for $\diamond$. Otherwise no such decomposition exists, so the length $1$ part vanishes and $\widetilde f(\mathbf{h})=0$.
\end{remark}
\begin{theorem}\label{thm:MSS-character}\label{lem:F}
Let $\X=\{m_1\prec m_2\prec\dots\prec m_r\}$. Assume that $f_m:(\K\A,\diamond)\to\calP$ is a $\K$-algebra homomorphism for every $m\in\X$. Then $F_\X$ factors through the quotient $\Yd=\Y/\ker L_\diamond$, and on $\Yd$ it is the ordered convolution
\begin{equation}\label{eq:MSSconv}
    F_\X=\widetilde f_{m_1}\star\widetilde f_{m_2}\star\cdots\star\widetilde f_{m_r},
\end{equation}
where the convolution is taken with respect to the cutting coproduct $\Delta_{cut}$ on $\Yd$:
\begin{equation*}
     \varphi\star\psi:=m_{\calP}\circ(\varphi\otimes\psi)\circ\Delta_{cut}.
\end{equation*}
In particular, $F_\X:\Yd\to\calP$ is a $\K$-algebra homomorphism.
\end{theorem}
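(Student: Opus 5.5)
We prove the convolution identity on $\Y$ first, with each $\widetilde f_m$ pulled back along the projection $\pi:\Y\to\Yd$. The factorization through $\Yd$ and the homomorphism property then follow from general facts. Write $g_m:=\widetilde f_{m}\circ\pi=\rho_{f_m}\circ L_\diamond$. By \cref{rem:ftilde-explicit}, $g_m(\mathbf h)$ equals $\prod_{(i,j)}f_m(h_{i,j})$ if $\Sh(\mathbf h)$ is a horizontal strip, equals $1$ on $\mathbf 1$, and vanishes otherwise. Convolution on $\Y$ uses $\Delta_{cut}$.

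\emph{Step 1: the convolution identity on $\Y$.} By coassociativity of $\Delta_{cut}$, the iterated convolution applied to a tableau $\mathbf h$ of shape $\lambda/\mu$ is
\begin{equation*}
(g_{m_1}\star\cdots\star g_{m_r})(\mathbf h)=\sum_{\mu=\eta^{(0)}\subseteq\eta^{(1)}\subseteq\cdots\subseteq\eta^{(r)}=\lambda}\ \prod_{t=1}^r g_{m_t}\bigl(\mathbf h_{\eta^{(t)}/\eta^{(t-1)}}\bigr).
\end{equation*}
Here the chain is weak, so equalities are allowed, and each such equality contributes $g(\mathbf 1)=1$. A term is nonzero only if every $\eta^{(t)}/\eta^{(t-1)}$ is a horizontal strip, and then it equals $\prod_t\prod_{(i,j)\in\eta^{(t)}/\eta^{(t-1)}}f_{m_t}(h_{i,j})$.

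Weak chains of partitions with horizontal-strip steps are in bijection with semi-standard fillings of $\lambda/\mu$ by $\{1,\dots,r\}$, obtained by putting $t$ in the boxes of the $t$-th strip. This is \cref{lem:SSD}, adapted to allow empty steps (unused labels); the proof there goes through verbatim. Identifying label $t$ with $m_t$, these fillings are exactly $\SSYT(\lambda/\mu,\X)$. So the sum is exactly \eqref{eq:Schursum}, and $F_\X=g_{m_1}\star\cdots\star g_{m_r}$ on $\Y$. The case $r=0$ is the convention $F_\X=\epsilon$.

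\emph{Step 2: factorization.} By \cref{Thm:Main-L-diamond}'s predecessor, namely the Hopf isomorphism theorem, $\Delta_{cut}$ descends to $\Yd$ with $\overline\Delta_{cut}\circ\pi=(\pi\otimes\pi)\circ\Delta_{cut}$. Hence
\begin{equation*}
(\widetilde f_{m_1}\circ\pi)\star(\widetilde f_{m_2}\circ\pi)=(\widetilde f_{m_1}\star\widetilde f_{m_2})\circ\pi,
\end{equation*}
and inductively the same holds for $r$ factors. By Step 1, $F_\X=(\widetilde f_{m_1}\star\cdots\star\widetilde f_{m_r})\circ\pi$. Thus $F_\X$ vanishes on $\ker L_\diamond$ and equals \eqref{eq:MSSconv} on $\Yd$.

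\emph{Step 3: homomorphism.} Each $\widetilde f_m$ is an algebra homomorphism by \cref{lem:fm}. In a commutative bialgebra with commutative target $\calP$, the convolution of two algebra homomorphisms is again one, because $\Delta_{cut}$ is multiplicative and $m_\calP$ is a homomorphism when $\calP$ is commutative. Induction on $r$ then gives the result. (Alternatively, \cref{prop:MSS-on-Y} already gives multiplicativity on $\Y$, and it passes to the quotient.)

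The main obstacle is Step 1. One must handle empty intermediate steps carefully, so that unused elements of $\X$ correspond to trivial cuts. One must also make sure that horizontal strips, which may be disconnected or skew, are exactly the shapes on which $g_m$ is nonzero. This relies on \cref{rem:ftilde-explicit} being valid for disconnected shapes, which holds since $L_\diamond$ and $\rho_f$ are multiplicative.
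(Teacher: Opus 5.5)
Your proposal is correct and follows essentially the same route as the paper. Both arguments expand the iterated convolution over weak chains $\mu=\eta^{(0)}\subseteq\cdots\subseteq\eta^{(r)}=\lambda$, use the explicit form of $\widetilde f_m$ to keep only horizontal-strip steps, biject these chains with $\SSYT(\lambda/\mu,\X)$, and then observe that a convolution of algebra homomorphisms into a commutative algebra is again an algebra homomorphism.

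Your Step 2 only makes explicit the descent $\overline\Delta_{cut}\circ\pi=(\pi\otimes\pi)\circ\Delta_{cut}$, which the paper leaves implicit in ``in particular $F_\X$ factors through $\Yd$''. The right citation for that descent is the Hopf isomorphism theorem (\cref{mainthmA:1}(2)), not \cref{Thm:Main-L-diamond}.
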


\begin{proof}
We show that the convolution in \eqref{eq:MSSconv} equals the Schur sum \eqref{eq:Schursum}. Let $\mathbf{h}$ have shape $\lambda/\mu$. Expanding the convolution along $\Delta_{cut}$ gives
\begin{equation*}
    (\widetilde f_{m_1}\star\cdots\star\widetilde f_{m_r})(\mathbf{h})=\sum_{\mu=\eta^{(0)}\subseteq\eta^{(1)}\subseteq\cdots\subseteq\eta^{(r)}=\lambda}\prod_{\ell=1}^{r}\widetilde f_{m_\ell}\bigl(\mathbf{h}_{\eta^{(\ell)}/\eta^{(\ell-1)}}\bigr).
\end{equation*}
Here empty skew shapes are allowed and contribute $1$. By \cref{rem:ftilde-explicit}, a term is nonzero only when every skew shape $\eta^{(\ell)}/\eta^{(\ell-1)}$ is a horizontal strip. Such chains are in bijection with the semi-standard fillings of $\lambda/\mu$ with entries in $\X$: given a chain, put the entry $m_\ell$ on all boxes of $\eta^{(\ell)}/\eta^{(\ell-1)}$. The rows weakly increase because the $\eta^{(\ell)}$ increase, and the columns strictly increase because each difference is a horizontal strip. Conversely, from a filling $T=(m_{i,j})$ we recover the chain by letting $\eta^{(\ell)}$ be the set of boxes with entries at most $m_\ell$. Under this bijection the term equals $\prod_{(i,j)}f_{m_{i,j}}(h_{i,j})$, so the convolution equals \eqref{eq:Schursum}. In particular $F_\X$ factors through $\Yd$.

It remains to show that the convolution is an algebra homomorphism. Each $\widetilde f_m$ is a $\K$-algebra homomorphism by \cref{lem:fm}. Since $\Delta_{cut}$ is an algebra homomorphism and $\calP$ is commutative, the convolution of two algebra homomorphisms $\Yd\to\calP$ is again one. Indeed, for $x,y\in\Yd$ write $\Delta_{cut}(x)=\sum x_{(1)}\otimes x_{(2)}$ and $\Delta_{cut}(y)=\sum y_{(1)}\otimes y_{(2)}$. Then for algebra homomorphisms $\varphi,\psi:\Yd\to\calP$,
\begin{align*}
    (\varphi\star\psi)(x\ast y)
    &=\sum\varphi(x_{(1)}\ast y_{(1)})\,\psi(x_{(2)}\ast y_{(2)})\\
    &=\sum\varphi(x_{(1)})\varphi(y_{(1)})\,\psi(x_{(2)})\psi(y_{(2)})\\
    &=\Bigl(\sum\varphi(x_{(1)})\psi(x_{(2)})\Bigr)\Bigl(\sum\varphi(y_{(1)})\psi(y_{(2)})\Bigr)=(\varphi\star\psi)(x)\,(\varphi\star\psi)(y).
\end{align*}
By induction on $r$, the ordered convolution $F_\X=\widetilde f_{m_1}\star\cdots\star\widetilde f_{m_r}$ is a $\K$-algebra homomorphism.
\end{proof}

By the definition of multiple Schur series, one can observe a natural decomposition property when the underlying ordered set is divided into consecutive parts. Suppose we cut the ordered set, according to its order, into several consecutive segments. Let $f_1, \ldots, f_s$ be the restrictions of the family $\{f_m\}$ to these segments. Let $F_1, \ldots, F_s$ be the corresponding multiple Schur series associated with $f_1, \ldots, f_s$, respectively. Then we have the following structural property.

\begin{proposition}\label{prop.MSSconv}
Assume each $f_m$ is a $\diamond$-algebra homomorphism. Let $\X=\X_1\sqcup\cdots\sqcup\X_s$ be a decomposition of the totally ordered set $\X$ into consecutive parts; that is, if $a\in\X_i$ and $b\in\X_j$ with $i<j$, then $a\prec b$.  Let $F_i$ be the multiple Schur series associated with the restricted family $(f_m)_{m\in\X_i}$.  Then
\begin{equation*}
     F_\X=F_1\star\cdots\star F_s.
\end{equation*}
\end{proposition}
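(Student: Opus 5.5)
The plan is to reduce everything to \cref{thm:MSS-character} and the associativity of the convolution product. Write $\X=\{m_1\prec\cdots\prec m_r\}$. Because the parts $\X_1,\dots,\X_s$ are consecutive, there are indices $0=k_0<k_1<\cdots<k_s=r$ with $\X_i=\{m_{k_{i-1}+1}\prec\cdots\prec m_{k_i}\}$; each $\X_i$ is again a finite totally ordered set with the induced order. The family restricted to $\X_i$ still consists of $\diamond$-algebra homomorphisms. Hence \cref{thm:MSS-character} applies to each $\X_i$ and gives, on $\Yd$,
\begin{equation*}
    F_i=\widetilde f_{m_{k_{i-1}+1}}\star\cdots\star\widetilde f_{m_{k_i}}.
\end{equation*}
If some $\X_i$ is empty, the convention $F_i=\epsilon$ is the unit for $\star$, so such parts can be discarded.

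Next I would note that $\star$ is associative on $\operatorname{Hom}_\K(\Yd,\calP)$, with unit $\epsilon$. This is standard and follows from the coassociativity of $\Delta_{cut}$, shown earlier for $\Y$ and hence inherited by $\overline\Delta_{cut}$ on $\Yd$, together with the associativity of $m_\calP$. Regrouping the ordered convolution from \cref{thm:MSS-character} for the whole set $\X$ then yields
\begin{equation*}
    F_\X=\widetilde f_{m_1}\star\cdots\star\widetilde f_{m_r}=(\widetilde f_{m_1}\star\cdots\star\widetilde f_{m_{k_1}})\star\cdots\star(\widetilde f_{m_{k_{s-1}+1}}\star\cdots\star\widetilde f_{m_r})=F_1\star\cdots\star F_s.
\end{equation*}
This identity holds on $\Yd$. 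Since $\Delta_{cut}$ descends compatibly along $\pi$, it also holds on $\Y$ after composing with $\pi$.

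As a sanity check, and as an alternative route avoiding the quotient, one can argue directly with the Schur sum \eqref{eq:Schursum}. Given a semi-standard filling of $\lambda/\mu$ with entries in $\X$, let $\eta^{(i)}$ be the set of boxes with entries in $\X_1\sqcup\cdots\sqcup\X_i$. This set is a partition containing $\mu$, by the same argument as in \cref{lem:SSD}. The restriction of the filling to $\eta^{(i)}/\eta^{(i-1)}$ is a semi-standard filling with entries in $\X_i$. Conversely, semi-standard fillings of the successive pieces glue to a semi-standard filling of $\lambda/\mu$. The reason is that at a row- or column-adjacency between different pieces, the upper-left box carries an entry from an earlier part, which is strictly smaller. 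This bijection matches the iterated expansion of $F_1\star\cdots\star F_s$ along $\Delta_{cut}$ term by term.

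The main obstacle is essentially bookkeeping: keeping track of empty parts and of empty pieces $\eta^{(i)}=\eta^{(i-1)}$, which contribute $F_i(\mathbf 1)=1$. Once these conventions are in place, the argument is just associativity of $\star$.
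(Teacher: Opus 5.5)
Your main argument is the paper's proof: you obtain the ordered convolution of the $\widetilde f_m$ by applying \cref{thm:MSS-character} both to $\X$ and to each $\X_i$, and then regroup using associativity of $\star$. Your alternative bijection on semi-standard fillings is also correct, and it proves more: it gives the identity on $\Y$ (with $\star$ taken via $\Delta_{cut}$ on $\Y$) for arbitrary linear families $f_m$, without the $\diamond$-homomorphism hypothesis.
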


\begin{proof}
Both sides are the ordered convolution of the same maps $\widetilde f_m$, $m\in\X$, written in the same order.  The result follows from the associativity of convolution.
\end{proof}

\begin{remark}
This convolution property reflects the combinatorial multiplicativity of the multiple Schur structure with respect to the ordered decomposition of indices.
Each $F_i$ can be interpreted as a ``local'' Schur component determined by the restriction of $f$ to a specific segment.
\end{remark}
One point worth noting is that for some algebraic setups, maps $\{f_m\}$ are not $\diamond$-algebra homomorphisms. The Schur sum \eqref{eq:Schursum} is different from the convolution \eqref{eq:MSSconv}. But we can always find a ``nice'' algebraic setup to ensure the multiple Schur series is an algebra homomorphism from $\Yd$ to $\calP$. This means we can always find a way to write multiple Schur series in terms of ``column'' values through $L_\diamond$, which relate to the quasi-shuffle algebra. In \cref{sec:SMLV}, we demonstrate this using Schur multiple $L$-values.

\subsection{Connection with the ring of symmetric functions}\label{sec:RofSymmFunc}
 Given partitions $\lambda$ and $\mu$, the skew Schur function $s_{\lambda/\mu}$ is the function in variables $x_1,x_2, \dots$, defined by
\begin{align*}
    s_{\lambda/\mu}=\sum_{T\in\SSYT(\lambda/\mu,\N)}x^{T},
\end{align*}
where $x^T=x_1^{t_1}x_2^{t_2}\cdots$ and the exponent $t_i$ is the number of boxes in $T$ that have an entry equal to $i$. If the partition $\mu=\varnothing$, then $\lambda/\mu=\lambda$. In this case, $s_{\lambda}$ is the Schur function of shape $\lambda$; these functions give a $\Z$-basis for the ring of symmetric functions. 
Throughout this subsection, let $\Lambda=\Lambda_{\Z}$ denote the ring of symmetric functions over $\Z$. For a commutative ring $R$, we write $\Lambda_R:=\Lambda_{\Z}\otimes_{\Z}R$. In particular, $ \Lambda_{\K}=\Lambda_{\Z}\otimes_{\Z}\K$ is the ring of symmetric functions with coefficients in the commutative ring $\K$. Since the Schur functions $\{s_\lambda\}$ form a $\Z$-basis of $\Lambda_{\Z}$, they form a $\K$-basis of $\Lambda_{\K}$. The definition of the Schur function and \cref{def:MSS} show that the skew Schur function $s_{\lambda/\mu}$ is a multiple Schur series:
\begin{proposition}\label{prop:SymmSchur}
Assume that the algebra $(\K\A,\diamond)$ is freely generated. For $M\geq1$, let $\X=\Z^+_M=\{1,\dots,M\}$ with the standard ordering, and define the family of $\diamond$-algebra homomorphisms $\{f_m\}_{m\in\Z^+_M}$ by
\begin{align*}
    f_m:(\K\A,\diamond)&\longrightarrow\K[x_1,\dots,x_M]\\
    a&\longmapsto x_m
\end{align*}
for every algebra generator $a$ of $(\K\A,\diamond)$. Then, for every skew shape $\lambda/\mu$ and every algebra generator $a$, the constant-entry tableau $\{a\}^{\lambda/\mu}\in\Y$ satisfies
\begin{align*}
    F_{\Z^+_M}\bigl(\{a\}^{\lambda/\mu}\bigr)
    =s_{\lambda/\mu}(x_1,\dots,x_M,0,0,\dots).
\end{align*}
In particular, the values of $F_{\Z^+_M}$ on constant-entry tableaux are symmetric polynomials. Taking the coefficientwise limit in the ring of formal power series $\K[[x_1,x_2,\dots]]$ gives
\begin{align*}
    \lim_{M\to\infty}F_{\Z^+_M}\bigl(\{a\}^{\lambda/\mu}\bigr)=s_{\lambda/\mu}\in\Lambda_{\K}.
\end{align*}
\end{proposition}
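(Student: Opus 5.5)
This is essentially a direct unwinding of \cref{def:MSS}, so the plan is to substitute and compare with the definition of $s_{\lambda/\mu}$.

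\textbf{Step 1: the maps $f_m$ are $\diamond$-homomorphisms.} Since $(\K\A,\diamond)$ is freely generated, sending every algebra generator to $x_m$ extends uniquely to a $\K$-algebra homomorphism $f_m:(\K\A,\diamond)\to\K[x_1,\dots,x_M]$. Hence \cref{thm:MSS-character} applies, although this is not needed for the identity itself.

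\textbf{Step 2: evaluate the Schur sum.} Apply \eqref{eq:Schursum} to $\mathbf h=\{a\}^{\lambda/\mu}$. Every entry equals the generator $a$, so each factor is $f_{m_{i,j}}(a)=x_{m_{i,j}}$. Thus
\begin{equation*}
F_{\Z^+_M}\bigl(\{a\}^{\lambda/\mu}\bigr)=\sum_{T\in\SSYT(\lambda/\mu,\Z^+_M)}\prod_{(i,j)}x_{T_{i,j}}=\sum_{T\in\SSYT(\lambda/\mu,\Z^+_M)}x^T .
\end{equation*}

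\textbf{Step 3: compare with $s_{\lambda/\mu}$.} Setting $x_k=0$ for $k>M$ in $s_{\lambda/\mu}=\sum_{T\in\SSYT(\lambda/\mu,\N)}x^T$ kills exactly those tableaux that use an entry larger than $M$. The surviving terms are indexed by $\SSYT(\lambda/\mu,\Z^+_M)$, with the same ordering conditions. This gives the first identity. Symmetry follows because $s_{\lambda/\mu}$ is a symmetric function, a classical fact (Bender--Knuth involutions), so its specialization is a symmetric polynomial. In the empty-shape case both sides equal $1$.

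\textbf{Step 4: the limit.} For a fixed monomial $x^\alpha$, its coefficient in $F_{\Z^+_M}$ counts the semi-standard tableaux of content $\alpha$. Once $M$ exceeds the largest index appearing in $\alpha$, this count is constant and equals the coefficient of $x^\alpha$ in $s_{\lambda/\mu}$. So the coefficientwise limit in $\K[[x_1,x_2,\dots]]$ exists and equals $s_{\lambda/\mu}\in\Lambda_\K$.

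The only point needing care is Step 1: the freeness hypothesis is used only to make $f_m$ well defined. Each $f_m$ is independent of the choice of generator, so the identity holds uniformly for every generator $a$.
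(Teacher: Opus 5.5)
Your proposal is correct and follows the paper's own route. The paper gives no separate proof: it justifies the proposition by noting that, when every entry is $a$, the Schur sum \eqref{eq:Schursum} becomes $\sum_{T}x^T$ over $\SSYT(\lambda/\mu,\Z^+_M)$, which is the tableau definition of $s_{\lambda/\mu}$ truncated to $M$ variables. Your remarks on symmetry and on the coefficientwise stabilization of the limit only spell out what the paper leaves implicit.
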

\cref{prop:SymmSchur} shows that, for every generator $a$, a constant-entry tableau of shape $\lambda/\mu$ produces the same skew Schur function $s_{\lambda/\mu}$: only the shape matters. This motivates the assignment $s_{\lambda/\mu}\mapsto\{a\}^{\lambda/\mu}$. In the next theorem, we fix an element $a\in\A$ and prove that this assignment extends to an injective $\K$-algebra homomorphism $\iota_a$ sending each skew Schur function $s_{\lambda/\mu}$ to $\{a\}^{\lambda/\mu}$. The proof uses the linearization map $L_\diamond$.
\begin{theorem}\label{Thm:iota}
Let $a\in\A$. The $\K$-linear map defined on the Schur basis
\begin{align*}
    \iota_a:\Lambda_{\K}&\longrightarrow\Yd\\
    s_\lambda&\longmapsto\{a\}^{\lambda}
    \quad
    (\lambda\text{ a partition})
\end{align*}
is a $\K$-algebra homomorphism, and for every skew shape $\lambda/\mu$ one has
\begin{align*}
    \iota_a(s_{\lambda/\mu})=\{a\}^{\lambda/\mu}.
\end{align*}
If the elements $a^{\diamond n}\in\K\A$ $(n\geq1 )$ are linearly independent over $\K$, then $\iota_a$ is injective.
\end{theorem}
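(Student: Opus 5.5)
The plan is to define $\iota_a$ first as an algebra homomorphism on generators and then identify it with the linear map on the Schur basis. By the Jacobi--Trudi theorem, this identification reduces to \cref{Thm:JTf-homo&LdJ}. Injectivity will follow from applying $\overline L_\diamond$ and using unitriangularity of the Kostka matrix.

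\emph{Step 1: definition on generators.} Recall that $\Lambda_{\K}=\K[e_1,e_2,\dots]$ is a polynomial ring in the elementary symmetric functions $e_n=s_{(1^n)}$. This holds over any commutative ring, since it holds over $\Z$. Define a $\K$-algebra homomorphism $\iota_a:\Lambda_{\K}\to\Yd$ by
\begin{align*}
    e_n\longmapsto \{a\}^{(1^n)},
\end{align*}
the one-column tableau with $n$ entries $a$, and set $e_0\mapsto\mathbf 1$. We also use the convention $e_n\mapsto 0$ for $n<0$.

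\emph{Step 2: agreement on skew Schur functions.} The dual (E-type) Jacobi--Trudi identity holds in $\Lambda_{\Z}$, hence in $\Lambda_{\K}$:
\begin{align*}
    s_{\lambda/\mu}=\det\bigl[e_{\lambda'_i-\mu'_j-i+j}\bigr]_{1\le i,j\le \lambda_1}.
\end{align*}
Applying the algebra homomorphism $\iota_a$ gives $\det\bigl[\{a\}^{(1^{\lambda'_i-\mu'_j-i+j})}\bigr]$. This is exactly $J(\{a\}^{\lambda/\mu})$ from \cref{def:JTF}, because $\{a\}^{\lambda/\mu}$ is diagonal constant with every diagonal entry $a_n=a$. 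The column tableau $\mathsf C[p,q]$ then depends only on its length $p-q+1$, and the conventions for lengths $0$ and $<0$ match those for $e_0$ and $e_{<0}$. By \cref{Thm:JTf-homo&LdJ}, $J(\{a\}^{\lambda/\mu})=\{a\}^{\lambda/\mu}$ in $\Yd$. Hence
\begin{align*}
    \iota_a(s_{\lambda/\mu})=\{a\}^{\lambda/\mu}.
\end{align*}
Taking $\mu=\varnothing$ shows that this homomorphism agrees with the linear map in the statement on the Schur basis. So that linear map is an algebra homomorphism.

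\emph{Step 3: injectivity.} Since $\overline L_\diamond$ is an isomorphism, it suffices to show that $L_\diamond\circ\iota_a$ is injective. By definition and \cref{lem:SSD}, the semi-standard decompositions of $\lambda$ correspond to SSYT with content a composition $c=(c_1,\dots,c_r)$ of $|\lambda|$ with all parts positive. Therefore
\begin{align*}
    L_\diamond(\{a\}^{\lambda})=\sum_{c}K_{\lambda,c}\; a^{\diamond c_1}a^{\diamond c_2}\cdots a^{\diamond c_r},
\end{align*}
where $K_{\lambda,c}$ is the Kostka number. Assume the $a^{\diamond n}$ are $\K$-linearly independent. Then they span a free submodule $V\subseteq\K\A$, and the concatenation words in them are linearly independent in $T(V)\subseteq\KA$.

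Suppose $\sum_\lambda x_\lambda s_\lambda\mapsto 0$. Looking at the coefficients of the words indexed by partitions $c=\nu$ gives $\sum_\lambda x_\lambda K_{\lambda,\nu}=0$ for every $\nu$. The Kostka matrix restricted to partitions of a fixed size is unitriangular with respect to dominance order, so it is invertible over $\Z$. Hence all $x_\lambda=0$, and $\iota_a$ is injective.

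The main delicate point is the linear-independence claim for words in the $a^{\diamond n}$. When $\K$ is not a field, one must make sure that independence of the $a^{\diamond n}$ in $\K\A$ yields independence of the words in $\KA\cong T(\K\A)$. The cleanest route is to use that $\K\A$ is free, so words in a family are independent whenever the family is independent. If a subtlety arises there, I would instead work directly over a basis of $\K\A$ and extract coefficients. A second point to verify carefully is the convention check in Step 2 for zero and negative lengths.
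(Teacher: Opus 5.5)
Your proposal is correct and follows the paper's proof: define the homomorphism on the $e_n$, identify it with $\iota_a$ via the dual Jacobi--Trudi identity and \cref{Thm:JTf-homo&LdJ}, then prove injectivity by applying $L_\diamond$ and expanding in the words $a^{\diamond c_1}\cdots a^{\diamond c_r}$ with Kostka coefficients. The differences are cosmetic. You invoke unitriangularity of the Kostka matrix where the paper reads off the monomial coefficients of $\sum_\lambda c_\lambda s_\lambda$ and uses the Schur basis. You obtain independence of the words from flatness of free modules, which is valid: $V^{\otimes l}\to(\K\A)^{\otimes l}$ is injective. The paper instead proves it by an explicit first-letter induction.
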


\begin{proof}
The map is well-defined because the Schur functions $s_\lambda$ form a $\K$-basis of $\Lambda_{\K}$.

We first prove $\iota_a$ is a $\K$-algebra homomorphism and that it sends skew Schur functions to constant-entry tableaux. Let $\jmath_a:\Lambda_{\K}\to\Yd$ be the $\K$-algebra homomorphism defined by
\begin{align*}
    \jmath_a(e_m)=\{a\}^{(1^m)} \qquad (m\geq1),
\end{align*}
where $e_m$ is the $m$-th elementary symmetric function. This map is well-defined, since $\Lambda_{\K}=\K[e_1,e_2,\dots]$ is the polynomial ring on the $e_m$. By the dual Jacobi--Trudi identity,
\begin{align*}
    s_\lambda = \det\left[ e_{\lambda'_i-i+j} \right]_{1\leq i,j\leq\lambda_1}.
\end{align*}
Applying $\jmath_a$ gives
\begin{align*}
    \jmath_a(s_\lambda) = \det\left[ \{a\}^{(1^{\lambda'_i-i+j})} \right]_{1\leq i,j\leq\lambda_1}.
\end{align*}
By the Jacobi--Trudi formula in $\Yd$ (\cref{Thm:JTf-homo&LdJ}), this determinant equals $\{a\}^{\lambda}$. Hence
\begin{align*}
    \jmath_a(s_\lambda)=\{a\}^{\lambda}=\iota_a(s_\lambda)
\end{align*}
for every partition $\lambda$. Since the $s_\lambda$ form a basis, we get $\jmath_a=\iota_a$. Therefore $\iota_a$ is a $\K$-algebra homomorphism. The same argument applies to the skew Jacobi--Trudi identity
\begin{align*}
    s_{\lambda/\mu} = \det\left[ e_{\lambda'_i-\mu'_j-i+j} \right]_{1\leq i,j\leq\lambda_1}.
\end{align*}
Applying $\iota_a$ and using \cref{Thm:JTf-homo&LdJ} gives
\begin{align*}
    \iota_a(s_{\lambda/\mu}) =\det\left[ \{a\}^{(1^{\lambda'_i-\mu'_j-i+j})} \right]_{1\leq i,j\leq\lambda_1} = \{a\}^{\lambda/\mu}.
\end{align*}

It remains to prove injectivity when $\{a^{\diamond n}\in\K\A\mid n\ge1\}$ are linearly independent over $\K$. It suffices to show that the tableaux $\{a\}^{\lambda}$, where $\lambda$ runs over all partitions, are linearly independent in $\Yd$. Suppose that
\begin{align*}
    \sum_{\lambda} c_\lambda \{a\}^{\lambda}=0
    \qquad
    (c_\lambda\in\K),
\end{align*}
with only finitely many $c_{\lambda}\neq 0$. Applying $L_\diamond$ gives
\begin{align*}
    \sum_{\lambda} c_\lambda\, L_\diamond\bigl(\{a\}^{\lambda}\bigr)=0
    \qquad
    \text{in }\KA.
\end{align*}
For a composition $\alpha=(\alpha_1,\dots,\alpha_l)$ of $n$, write
\begin{align*}
    w_\alpha := a^{\diamond\alpha_1}a^{\diamond\alpha_2}\cdots a^{\diamond\alpha_l}\in\KA.
\end{align*}
Next, we will prove that the elements $w_{\alpha}$, where $\alpha$ runs over all compositions, are linearly independent over $\K$. Since $\KA$ is a free $\K$-module with basis $\A^{\ast}$, we have $\KA=\bigoplus_{l\geq0}\KA_l$ as $\K$-modules, where $\KA_l$ denotes the $\K$-span of the words of length $l$. Expanding each factor $a^{\diamond \alpha_i}$ in the letters shows $w_{\alpha}\in\KA_l$ for every composition $\alpha$ of length $l$. Suppose a $\K$-linear combination of the $w_\alpha$ vanishes. For each $l$, the terms with $\alpha$ of length $l$ lie in $\KA_l$. Since $\bigoplus_{l}\KA_l$ is a direct sum, the terms for each $l$ vanish separately. Therefore, it suffices to prove that the $w_\alpha$ with $\alpha$ of a fixed length $l$ are linearly independent for $l\ge1$.

For $l=1$, the compositions of length $1$ are $\alpha=(t)$ with $t\geq1$, and $w_{(t)}=a^{\diamond t}$. Their linear independence is the assumption in the theorem. Let $l\geq2$ and suppose
\begin{align*}
    \sum_{\alpha=(\alpha_1,\dots,\alpha_l)} d_\alpha\, w_\alpha=0
    \qquad
    (d_\alpha\in\K).
\end{align*}
Write $a^{\diamond t}=\sum_{u\in\A}b_{u,t}\,u$ with $b_{u,t}\in\K$. Every word of length $l$ is uniquely the first letter followed by a word of length $l-1$. Expand the first factor of each $w_\alpha$ in the letters. For each letter $u\in\A$, collect the words starting with $u$ and cancel this first letter. This gives
\begin{align*}
    \sum_{\alpha}d_\alpha\, b_{u,\alpha_1}\,a^{\diamond\alpha_2}\cdots a^{\diamond\alpha_l}=0,
\end{align*}
for every $u\in\A$. By the induction hypothesis, for every letter $u$ and every composition $(\alpha_2,\dots,\alpha_l)$,
\begin{align*}
    \sum_{t\geq1}d_{(t,\alpha_2,\dots,\alpha_l)}\,b_{u,t}=0.
\end{align*}
So the element $\sum_{t\geq1}d_{(t,\alpha_2,\dots,\alpha_l)}\,a^{\diamond t}\in\K\A$ has coefficient $0$ at every letter, hence is $0$. The elements $a^{\diamond t}$ are linearly independent, so $d_{(t,\alpha_2,\dots,\alpha_l)}=0$ for all $t$. This finishes the induction.

For a partition $\lambda$ and a composition $\alpha=(\alpha_1,\dots,\alpha_l)$, let $N_{\lambda,\alpha}$ be the number of semi-standard decompositions $(D_1,\dots,D_l)$ of $\lambda$ with $|D_i|=\alpha_i$ for all $i$. This counts the semi-standard tableaux of shape $\lambda$ where each entry $i$ appears $\alpha_i$ times. So $N_{\lambda,\alpha}=0$ unless $|\lambda|=\alpha_1+\cdots+\alpha_l$. Grouping the semi-standard decompositions by $\alpha$ gives
\begin{align*}
    L_\diamond\bigl(\{a\}^{\lambda}\bigr)=\sum_{\alpha}N_{\lambda,\alpha}\,w_\alpha.
\end{align*}
Substituting this into the relation above writes $0$ as a $\K$-linear combination of the $w_\alpha$. These are linearly independent, so every coefficient is $0$. For every composition $\alpha$, the coefficient of $w_\alpha$ is zero:
\begin{align*}
    \sum_{\lambda} c_\lambda N_{\lambda,\alpha}=0.
\end{align*}
In particular, this holds for every partition $\rho=(\rho_1,\dots,\rho_l)$.
Let $g:=\sum_{\lambda}c_\lambda s_\lambda\in\Lambda_{\K}$. By the definition of the Schur function as the generating series of semi-standard tableaux, the coefficient of $x_1^{\rho_1}\cdots x_l^{\rho_l}$ in $s_\lambda$ is $N_{\lambda,\rho}$. So the coefficient of $x_1^{\rho_1}\cdots x_l^{\rho_l}$ in $g$ is $\sum_{\lambda}c_\lambda N_{\lambda,\rho}=0$ for every partition $\rho$. The function $g$ is symmetric, so the coefficient of any monomial equals the coefficient of the monomial whose exponents are sorted into a partition. So all coefficients of $g$ are $0$, and $g=0$ in $\Lambda_{\K}$. The Schur functions form a $\K$-basis of $\Lambda_{\K}$, so all $c_\lambda$ are $0$. Therefore $\iota_a$ is injective.
This completes the proof.
\end{proof}
Through $\iota_a$, every linear identity among Schur functions becomes an identity among constant-entry tableaux. For example, each skew Schur function can be written as a unique linear combination of non-skew Schur functions. The coefficients are given by the Littlewood--Richardson rule. Applying $\iota_a$ and using \cref{Thm:iota}, we can extend this into $\Yd$. 
\begin{corollary}[Littlewood--Richardson rule] \label{cor:skewtononskew}
 Let $a\in\A$ and $\lambda/\mu$ be a skew shape with $n=|\lambda|-|\mu|$. Then, in $\Yd$, one has
\begin{align*}
    \{a\}^{\lambda/\mu} = \sum_{\nu\vdash n} c^\lambda_{\mu,\nu}\,\{a\}^{\nu},
\end{align*}
where $c^\lambda_{\mu,\nu}$ are the Littlewood--Richardson coefficients.
\end{corollary}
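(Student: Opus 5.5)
The plan is to transport the classical Littlewood--Richardson identity through the algebra homomorphism $\iota_a$ of \cref{Thm:iota}. In $\Lambda_{\K}$ (indeed already in $\Lambda_{\Z}$) the skew Schur function expands in the Schur basis as
\begin{align*}
    s_{\lambda/\mu}=\sum_{\nu}c^{\lambda}_{\mu,\nu}\,s_{\nu}.
\end{align*}
Here $c^\lambda_{\mu,\nu}$ are the Littlewood--Richardson coefficients, defined by $s_\mu s_\nu=\sum_\lambda c^\lambda_{\mu,\nu}s_\lambda$. The equality with the skew expansion follows from the Hall inner product: $\langle s_{\lambda/\mu},s_\nu\rangle=\langle s_\lambda,s_\mu s_\nu\rangle$. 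I will take this classical fact as known. I will also record that $c^\lambda_{\mu,\nu}=0$ unless $|\nu|=|\lambda|-|\mu|=n$, so the sum is finite and runs over $\nu\vdash n$. The integers $c^\lambda_{\mu,\nu}$ are regarded in $\K$ via $\Z\to\K$, which is compatible with $\Lambda_{\K}=\Lambda_{\Z}\otimes_{\Z}\K$.

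Next I will apply $\iota_a$ to both sides. Since $\iota_a$ is $\K$-linear, the right-hand side becomes $\sum_{\nu\vdash n}c^\lambda_{\mu,\nu}\,\iota_a(s_\nu)=\sum_{\nu\vdash n}c^\lambda_{\mu,\nu}\{a\}^{\nu}$, directly from the definition of $\iota_a$ on the Schur basis. For the left-hand side, I will invoke the second assertion of \cref{Thm:iota}, namely $\iota_a(s_{\lambda/\mu})=\{a\}^{\lambda/\mu}$ for every skew shape. That assertion came from the skew Jacobi--Trudi identity combined with \cref{Thm:JTf-homo&LdJ}. Equating the two sides gives the claimed identity in $\Yd$.

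There is essentially no obstacle. The only care needed is to check that nothing beyond linearity of $\iota_a$ is used: injectivity is not needed, and neither is the hypothesis on linear independence of the $a^{\diamond n}$. The skew case of \cref{Thm:iota} already holds for arbitrary $\lambda/\mu$, including disconnected shapes, whose constant tableau $\{a\}^{\lambda/\mu}$ is a product in $\Yd$. If one wanted to avoid quoting the skew-to-nonskew expansion, one could derive it from the defining product rule together with self-adjointness of skewing. That is standard symmetric function theory and lies outside the tableau algebra.
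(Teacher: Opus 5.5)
Your proposal is correct and takes the same approach as the paper: expand $s_{\lambda/\mu}=\sum_{\nu\vdash n}c^{\lambda}_{\mu,\nu}s_\nu$ in $\Lambda_{\K}$, then apply the $\K$-linear map $\iota_a$ and use $\iota_a(s_{\lambda/\mu})=\{a\}^{\lambda/\mu}$ from \cref{Thm:iota}. You are also right that neither injectivity of $\iota_a$ nor the linear-independence hypothesis on the $a^{\diamond n}$ is needed.
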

We now give explicit expansions of constant-entry tableaux. The key tool is the Newton identity in the quasi-shuffle algebra $\KA$. It holds over any commutative ring. In characteristic zero, it integrates to the exponential identity of Hoffman--Ihara~\cite{HI}.

\begin{lemma}[Newton identity, cf.\ \cite{HI}]\label{expHIresult} Let $\A$ be an alphabet with a commutative associative product $\diamond$ on $\K\A$, and let $z\in\A$. Write $z^{\,n}$ for the $n$-fold concatenation of $z$ (with $z^{\,0}=\mathbf 1$).
Then in $(\KA,\ast_\diamond)$, for every $n\ge1$,
\begin{align}\label{eq:newton}
    n\,z^{\,n}  =\sum_{i=1}^{n}(-1)^{i-1}\,z^{\diamond i}\ast_\diamond z^{\,n-i}.
\end{align}
Moreover, if $\Q\subseteq\K$, then \eqref{eq:newton} integrates to the exponential form of \textnormal{\cite[Cor.~5.1]{HI}}:
\begin{align}\label{eq:newton-exp}
    \exp_{\ast_\diamond}\!\left(\sum_{i\geq1}\frac{(-1)^{i-1}}{i}\, z^{\diamond i}\,X^i\right)  =\sum_{n=0}^{\infty}z^{\,n}\,X^n .
\end{align}
\end{lemma}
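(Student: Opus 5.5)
We prove the Newton identity \eqref{eq:newton} by a direct computation in $(\KA,\ast_\diamond)$. The plan is to compute each product $z^{\diamond i}\ast_\diamond z^{\,n-i}$ explicitly and then let the alternating sum telescope. Write $z^{\diamond i}$ for the element of $\K\A$, viewed as a word of length one; by bilinearity the recursion \eqref{eq:quasishuffle} applies to it as to a letter.

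\textbf{Step 1: the products $x\ast_\diamond z^{\,m}$.} For a single element $x\in\K\A$ and $m\ge0$, we prove by induction on $m$, using \eqref{eq:quasishuffle} with the letter $x$ on one side, that
\begin{align*}
  x\ast_\diamond z^{\,m}=\sum_{k=0}^{m} z^{\,k}\,x\,z^{\,m-k}+\sum_{k=0}^{m-1} z^{\,k}\,(x\diamond z)\,z^{\,m-1-k}.
\end{align*}
The first sum inserts $x$ into one of the $m+1$ gaps. The second merges $x$ with one of the $m$ copies of $z$. For $x=z^{\diamond i}$ we have $x\diamond z=z^{\diamond(i+1)}$. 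Hence
\begin{align*}
  z^{\diamond i}\ast_\diamond z^{\,n-i}=P_{i,n-i}+P_{i+1,n-i-1},
\end{align*}
where $P_{j,m}:=\sum_{k=0}^{m}z^{\,k}z^{\diamond j}z^{\,m-k}$ is the sum of the words of length $m+1$ with one letter $z^{\diamond j}$ and all other letters equal to $z$. For $i=n$ the second term is absent, since $n-i-1<0$.

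\textbf{Step 2: telescoping.} Substituting gives
\begin{align*}
  \sum_{i=1}^n(-1)^{i-1}z^{\diamond i}\ast_\diamond z^{\,n-i}=\sum_{i=1}^n(-1)^{i-1}\bigl(P_{i,n-i}+P_{i+1,n-i-1}\bigr).
\end{align*}
The term $P_{i+1,n-i-1}$ from index $i$ cancels the term $P_{i+1,n-(i+1)}$ from index $i+1$, because their signs are opposite. Only $P_{1,n-1}=\sum_{k=0}^{n-1}z^{\,k}\,z\,z^{\,n-1-k}=n\,z^{\,n}$ survives, which is exactly \eqref{eq:newton}. Nothing here uses division, so the identity holds over any commutative ring $\K$.

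\textbf{Step 3: the exponential form.} Assume $\Q\subseteq\K$. Set
\begin{align*}
  E(X):=\sum_{n\ge0}z^{\,n}X^n,\qquad \Phi(X):=\sum_{i\ge1}\tfrac{(-1)^{i-1}}{i}z^{\diamond i}X^i,
\end{align*}
both in $(\KA,\ast_\diamond)[[X]]$, which is commutative. Multiplying \eqref{eq:newton} by $X^n$ and summing over $n\ge1$ gives $X\frac{d}{dX}E=\bigl(X\frac{d}{dX}\Phi\bigr)\ast_\diamond E$. The series $G:=\exp_{\ast_\diamond}(\Phi)$ satisfies the same differential equation with $G(0)=\mathbf 1=E(0)$. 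Comparing coefficients degree by degree, which requires dividing by $n$, shows that $E=G$; this gives \eqref{eq:newton-exp}.

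\textbf{Main obstacle.} The only real work is the closed formula in Step 1. The care needed is to treat $z^{\diamond i}\in\K\A$ correctly as a single length-one element that may be a linear combination of letters. Since the recursion \eqref{eq:quasishuffle} is bilinear, this causes no problem.
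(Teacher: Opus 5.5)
Your proof is correct, and it takes a different route from the paper.

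\textbf{The paper's route.} The paper does not compute the products directly. It imports the differential equation $X\,E'(X)=\bigl(X\Phi'(X)\bigr)\ast_\diamond E(X)$ over $\Q$ from Hoffman--Ihara's Theorem~5.1 (with $f=\log(1+t)$) and reads off \eqref{eq:newton} by comparing coefficients. It then transfers the identity to an arbitrary commutative ring $\K$ and an arbitrary $\diamond$ by a universality argument:
\begin{enumerate}[(i)]
\item prove it for the alphabet $B=\{u_i\}$ with $u_i\diamond u_{i'}=u_{i+i'}$;
\item note that both sides have integer coefficients, so it holds over $\Z$, and base-change to $\K$;
\item push forward along the quasi-shuffle homomorphism induced by $u_i\mapsto z^{\diamond i}$.
\end{enumerate}

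\textbf{Your route.} You prove the closed formula for $x\ast_\diamond z^{\,m}$ with $x\in\K\A$. This follows by induction on $m$ from the recursion, which is linear in the first letter. From it you get $z^{\diamond i}\ast_\diamond z^{\,n-i}=P_{i,n-i}+P_{i+1,n-i-1}$, and the alternating sum telescopes to $P_{1,n-1}=n\,z^{\,n}$.

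\textbf{Comparison.} Your argument is self-contained and uses neither division nor an external theorem, so it holds over every $\K$ at once. The descent-and-base-change step becomes unnecessary. The paper's version is shorter on the page given \cite{HI}, and it presents \eqref{eq:newton} as one instance of Hoffman--Ihara's general family of identities for arbitrary $f$. The exponential step is essentially the same in both: you derive the differential equation from \eqref{eq:newton} rather than the reverse. Uniqueness of the solution with constant term $\mathbf 1$, obtained by dividing by $n$ degree by degree, then gives \eqref{eq:newton-exp}.
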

\begin{proof}
Set $E(X):=\sum_{n\ge0}z^{\,n}X^n$ and $P(X):=\sum_{i\ge1}(-1)^{i-1}z^{\diamond i}X^{i}$ in $\KA[[X]]$. We first prove \eqref{eq:newton} over $\Q$, then descend to $\Z$ and base-change. Applying \cite[Thm.~5.1]{HI} with $f=\log(1+t)$ gives $ X\,E'(X)=P(X)\ast_\diamond E(X)$ in $\Q\langle\A\rangle[[X]]$. Comparing the coefficient of $X^n$ yields \eqref{eq:newton} in $\Q\langle\A\rangle$.

For a general commutative ring $\K$, we pass to a universal setting. Let $B=\{u_i\mid i\geq1\}$ be an alphabet and define $u_i\diamond u_{i'}:=u_{i+i'}$ on $\Q B$. Then $u_1^{\diamond i}=u_i$, so both sides of \eqref{eq:newton} for the letter $u_1$ are $\Z$-linear combinations of words in $B$. The words in $B$ form a basis of $\Q\langle B\rangle$, and the identity holds in $\Q\langle B\rangle$ by the first paragraph. Hence it holds in $\Z\langle B\rangle$, and after the base change $\Z\to\K$ it holds in $\K\langle B\rangle$ with the same product $\diamond$. Finally, define a $\K$-linear map $\K B\to\K\A$, $u_i\mapsto z^{\diamond i}$. This map respects the product $\diamond$. By the recursion \eqref{eq:quasishuffle}, the induced map on words $\K\langle B\rangle\to\KA$ is a homomorphism for the quasi-shuffle products. Applying it to the identity in $\K\langle B\rangle$ gives \eqref{eq:newton} in $\KA$.

Finally, if $\Q\subseteq\K$, then each $1/i$ is invertible, and the recursion \eqref{eq:newton} is solved by
\begin{align*}
    E(X)=\exp_{\ast_\diamond}\!\left(\sum_{i\ge1}\frac{(-1)^{i-1}}{i}\,z^{\diamond i}X^i\right),
\end{align*}
which is \eqref{eq:newton-exp}.
\end{proof}

The map $L_\diamond$ is a $\ast$-algebra homomorphism. Moreover, for any $a\in\A$ we have $L_\diamond(\{a\}^{(1^n)})=a^{\,n}$ (the $n$-fold concatenation) and $L_\diamond(\{a^{\diamond i}\}^{(1)})=a^{\diamond i}$. Therefore \eqref{eq:newton}, applied to the letter $z=a$, pulls back to $\Yd$. It expresses a column tableau through shorter columns and depth-one tableaux.

\begin{corollary}\label{cor:expYTresult}
For a countable set $\A$ and any $a\in\A$, the column tableaux $\{a\}^{(1^n)}$ satisfy, for every $n\ge1$,
\begin{align}\label{eq:newtonYT}
    n\,\{a\}^{(1^n)} =\sum_{i=1}^{n}(-1)^{i-1}\,\{a^{\diamond i}\}^{(1)}\ast\{a\}^{(1^{\,n-i})} \qquad\text{in }\Yd.
\end{align}
If $\Q\subseteq\K$, this integrates to
\begin{align}\label{eq:newtonYT-exp}
    \exp_{\ast}\!\left(\sum_{i\geq1}\frac{(-1)^{i-1}}{i}\, \{a^{\diamond i}\}^{(1)}\,X^i\right) =1+\sum_{n=1}^{\infty}\,\{a\}^{(1^n)}\,X^{n}.
\end{align}
\end{corollary}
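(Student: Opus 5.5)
The plan is to transport \eqref{eq:newton} from $\KA$ back to $\Yd$ along the isomorphism $\overline L_\diamond\colon(\Yd,\ast)\to(\KA,\ast_\diamond)$ of \eqref{eq:isoYd}. Since $\Yd=\Y/\ker L_\diamond$, an identity between elements of $\Y$ holds in $\Yd$ exactly when both sides have the same image under $L_\diamond$. So it suffices to compute $L_\diamond$ of every tableau appearing in \eqref{eq:newtonYT}. Multiplicativity of $L_\diamond$ (\cref{prop:L-alghom}) then reduces the claim to \eqref{eq:newton} with $z=a$.

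First I verify the two basic images. The column $\{a\}^{(1^n)}$ has a unique semi-standard decomposition, namely one box per block, as observed in the surjectivity part of \cref{prop:L-alghom}. Hence $L_\diamond(\{a\}^{(1^n)})=a^{\,n}$, and for $n=0$ this is the empty tableau mapping to $\mathbf 1$. A single box has only the one-block decomposition, so $L_\diamond(\{a^{\diamond i}\}^{(1)})=a^{\diamond i}$. Here the entry $a^{\diamond i}\in\K\A$ is interpreted via the $\K$-linear extension in each entry described in \cref{sec:introYTHA}, and $L_\diamond$ is linear in each entry because $|\cdot|_\diamond$ is multilinear.

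Next I apply $L_\diamond$ to the right-hand side of \eqref{eq:newtonYT}. This gives $\sum_{i=1}^n(-1)^{i-1}a^{\diamond i}\ast_\diamond a^{\,n-i}$, which equals $n\,a^{\,n}=L_\diamond(n\{a\}^{(1^n)})$ by \cref{expHIresult}. The difference of the two sides of \eqref{eq:newtonYT} therefore lies in $\ker L_\diamond$, which proves \eqref{eq:newtonYT} in $\Yd$.

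For the exponential form, assume $\Q\subseteq\K$. I apply $\overline L_\diamond$ coefficientwise in $X$ to both sides of \eqref{eq:newtonYT-exp}. Since $\overline L_\diamond$ is an algebra isomorphism and $\exp_\ast$ is a power series whose coefficients involve only finitely many products in each $X$-degree, it commutes with $\overline L_\diamond$ degreewise. The image of \eqref{eq:newtonYT-exp} is then exactly \eqref{eq:newton-exp}, and injectivity of $\overline L_\diamond$ gives the identity in $\Yd[[X]]$. Alternatively, one can rerun the recursion argument of \cref{expHIresult} directly in $\Yd$ using \eqref{eq:newtonYT}.

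The only delicate point is the meaning of the tableau $\{a^{\diamond i}\}^{(1)}$ when $a^{\diamond i}$ is a linear combination of letters rather than a single letter. This is handled by the multilinear convention, so I expect no real obstacle.
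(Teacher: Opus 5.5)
Your proposal is correct and follows essentially the same route as the paper. You compute $L_\diamond(\{a\}^{(1^n)})=a^{\,n}$ and $L_\diamond(\{a^{\diamond i}\}^{(1)})=a^{\diamond i}$, then pull back the Newton identity \eqref{eq:newton} and its exponential form \eqref{eq:newton-exp} with $z=a$ along the algebra isomorphism $\overline L_\diamond\colon(\Yd,\ast)\to(\KA,\ast_\diamond)$.
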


\begin{proof}
Apply $\overline L_\diamond^{-1}$ to \eqref{eq:newton} (resp.\,\eqref{eq:newton-exp}) with $z=a$, using that $\overline L_\diamond:(\Yd,\ast)\xrightarrow{\sim}(\KA,\ast_\diamond)$ is an algebra isomorphism by \cref{prop:L-alghom}.
\end{proof}
Next we turn to the power sums. For $j\in\N$, let $p_j=\sum_i x_i^{j}$ be the $j$-th power sum, and set $p_{\lambda}=p_{\lambda_1}p_{\lambda_2}\cdots$ for a partition $\lambda=(\lambda_1,\lambda_2,\dots)$. The family $\{p_{\lambda}\}$ is a $\Q$-basis of $\Lambda_{\Q}$.
\begin{corollary}\label{cor:powersum}
Let $a\in\A$. For any partition $\lambda=(\lambda_1,\dots,\lambda_n)$, we have
    \begin{align*}
        \iota_a(p_{\lambda}) =\{a^{\diamond\lambda_1}\}^{(1)}\ast\cdots\ast \{a^{\diamond\lambda_n}\}^{(1)}.
    \end{align*}
   Assume $(\K\A,\diamond)$ is freely generated and let $a$ be a generator. In terms of the $\diamond$-homomorphisms $\{f_m\}$ of \cref{prop:SymmSchur}, this gives
    \begin{align*}
        \lim_{M\to\infty}\prod_{i=1}^n F_{\Z_M^+}\bigl(\{a^{\diamond\lambda_i}\}^{(1)}\bigr)=p_{\lambda}.
    \end{align*}
\end{corollary}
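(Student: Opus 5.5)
Since $\iota_a$ is a $\K$-algebra homomorphism by \cref{Thm:iota}, and $p_\lambda=p_{\lambda_1}\cdots p_{\lambda_n}$, the first claim reduces to the single-part statement $\iota_a(p_j)=\{a^{\diamond j}\}^{(1)}$ for $j\ge1$. We prove this by induction on $j$, comparing two Newton identities. In $\Lambda_{\K}$ we have the classical Newton identity
\begin{align*}
    m\,e_m=\sum_{i=1}^{m}(-1)^{i-1}p_i\,e_{m-i}\qquad(m\ge1),
\end{align*}
valid over $\Z$, hence over any $\K$. Recall from the proof of \cref{Thm:iota} that $\iota_a(e_m)=\{a\}^{(1^m)}$. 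Applying $\iota_a$ gives
\begin{align*}
    m\,\{a\}^{(1^m)}=\sum_{i=1}^{m}(-1)^{i-1}\iota_a(p_i)\ast\{a\}^{(1^{m-i})}.
\end{align*}
We compare this with \eqref{eq:newtonYT} of \cref{cor:expYTresult} for $n=m$.

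The induction runs as follows. Assume $\iota_a(p_i)=\{a^{\diamond i}\}^{(1)}$ for $i<m$. Subtracting the two identities, all terms with $i<m$ cancel, and the $i=m$ terms carry the factor $\{a\}^{(1^0)}=\mathbf 1$. We obtain $(-1)^{m-1}\bigl(\iota_a(p_m)-\{a^{\diamond m}\}^{(1)}\bigr)=0$, so the claim holds for $m$. The base case $m=1$ is the same computation, giving $\iota_a(p_1)=\iota_a(e_1)=\{a\}^{(1)}$. Note that no division is needed: the coefficient of the unknown is $\pm1$. So the argument works over any commutative ring $\K$, and we avoid expressing $p_j$ in the Schur basis, which in general needs rational coefficients only for the inverse direction.

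For the second claim, we assume $(\K\A,\diamond)$ is freely generated and $a$ is a generator. Since $f_m$ is a $\diamond$-homomorphism, $f_m(a^{\diamond j})=x_m^j$. The one-box tableau has $\SSYT((1),\Z^+_M)=\{1,\dots,M\}$, so directly from \eqref{eq:Schursum}
\begin{align*}
    F_{\Z^+_M}\bigl(\{a^{\diamond j}\}^{(1)}\bigr)=\sum_{m=1}^{M}x_m^{j}=p_j(x_1,\dots,x_M).
\end{align*}
Taking the product over $i$ and the coefficientwise limit $M\to\infty$ gives $p_\lambda$: in each fixed degree the coefficients stabilize once $M$ exceeds the degree. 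Alternatively, one can combine the first claim with \cref{prop:SymmSchur} and multiplicativity of $F_{\Z^+_M}$ on $\Yd$ from \cref{thm:MSS-character}, but the direct computation is simpler.

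The only delicate point is bookkeeping: we need $\iota_a(e_m)=\{a\}^{(1^m)}$ (the identification $\jmath_a=\iota_a$) and that \eqref{eq:newtonYT} holds in $\Yd$ without characteristic assumptions. Both are already established, so I expect no real obstacle.
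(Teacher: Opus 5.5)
Your proposal is correct and follows the paper's own proof essentially step for step. Like the paper, you apply $\iota_a$ to the classical Newton identity $m\,e_m=\sum_{i}(-1)^{i-1}p_i\,e_{m-i}$ and compare the result with \eqref{eq:newtonYT} inductively to get $\iota_a(p_m)=\{a^{\diamond m}\}^{(1)}$, conclude the first claim by multiplicativity of $\iota_a$, and obtain the limit statement from the direct computation $F_{\Z^+_M}(\{a^{\diamond j}\}^{(1)})=\sum_{m=1}^{M}x_m^{j}$.
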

\begin{proof}
Newton's identity $n\,e_n=\sum_{i=1}^{n}(-1)^{i-1}p_i\,e_{n-i}$ holds in $\Lambda_{\K}$. Applying the algebra homomorphism $\iota_a$ and using $\iota_a(e_m)=\iota_a(s_{(1^m)})=\{a\}^{(1^m)}$ gives
\begin{align*}
    n\,\{a\}^{(1^n)}=\sum_{i=1}^{n}(-1)^{i-1}\,\iota_a(p_i)\ast\{a\}^{(1^{n-i})}.
\end{align*}
We compare this with \eqref{eq:newtonYT} by induction on $n$. For $n=1$ both identities give $\iota_a(p_1)=\{a\}^{(1)}=\{a^{\diamond 1}\}^{(1)}$. For $n\geq2$, subtracting the two identities and using the induction hypothesis leaves $(-1)^{n-1}\bigl(\iota_a(p_n)-\{a^{\diamond n}\}^{(1)}\bigr)=0$. Hence $\iota_a(p_n)=\{a^{\diamond n}\}^{(1)}$ for all $n\geq1$, and the first identity follows from the multiplicativity of $\iota_a$. For the second identity, $f_m(a^{\diamond\lambda_i})=f_m(a)^{\lambda_i}=x_m^{\lambda_i}$, so $F_{\Z_M^+}\bigl(\{a^{\diamond\lambda_i}\}^{(1)}\bigr)=\sum_{m=1}^{M}x_m^{\lambda_i}$, and the product converges coefficientwise to $p_\lambda$.
\end{proof}

Identity \eqref{eq:newtonYT} solves each column $\{a\}^{(1^n)}$ recursively as a polynomial in the depth-one tableaux $\{a^{\diamond l}\}^{(1)}$. The only divisions are by $1,\dots,n$. Combining this with the Jacobi--Trudi formula (\cref{Thm:JTf-homo&LdJ}), which writes a diagonal tableau as a determinant of column tableaux, we obtain the following.

\begin{theorem}\label{Thm:YTpoly}
Let $\{a\}^{\lambda/\mu}\in\Yd$ be a constant-entry tableau of shape $\lambda/\mu$ with $n=|\lambda|-|\mu|$. Assume that $n!$ is invertible in $\K$. Then
\begin{align*}
    \{a\}^{\lambda/\mu}\in\Z[\tfrac1{n!}]\bigl[\{a^{\diamond l}\}^{(1)}\mid l\ge1\bigr],
\end{align*}
the subalgebra of $\Yd$ generated over $\Z[\tfrac1{n!}]$ by the depth-one tableaux under $\ast$. In particular, if $\Q\subseteq\K$, then $\{a\}^{\lambda/\mu}\in\Q[\{a^{\diamond l}\}^{(1)}\mid l\ge1]$.
\end{theorem}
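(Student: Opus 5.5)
The plan is to combine the Jacobi--Trudi formula in $\Yd$ with the Newton recursion \eqref{eq:newtonYT}. The tableau $\{a\}^{\lambda/\mu}$ is diagonal constant, with every diagonal entry $a_k=a$. \cref{Thm:JTf-homo&LdJ} therefore gives, in $\Yd$,
\begin{align*}
    \{a\}^{\lambda/\mu}=\det\Bigl[\{a\}^{(1^{\lambda'_i-\mu'_j-i+j})}\Bigr]_{1\le i,j\le s},
\end{align*}
with the conventions $\{a\}^{(1^0)}=\mathbf 1$ and $\{a\}^{(1^m)}=0$ for $m<0$. Equivalently, one may invoke \cref{Thm:iota} and apply $\iota_a$ to the skew dual Jacobi--Trudi identity. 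A determinant is a $\Z$-polynomial in its entries. Hence it suffices to show that each column tableau $\{a\}^{(1^m)}$ that actually appears lies in $\Z[\tfrac1{n!}][\{a^{\diamond l}\}^{(1)}\mid l\ge1]$.

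The sizes $m$ occurring in the matrix are bounded by $n$. The $(i,j)$ entry has $m_{i,j}=\lambda'_i-\mu'_j-i+j$. I would first handle the entries with $m_{i,j}\le n$ directly. The entries with $m_{i,j}>n$ need more care: an individual entry can exceed $n$ (for instance when $i<j$), but by the support argument of Step 3 every nonzero permutation term uses column tableaux of total length $n$. The cleanest way to avoid this issue is to degree-truncate. Every term of the determinant expansion has total degree $n$ and is a product of nonempty columns, so each factor has length at most $n$. Terms containing a longer factor contain a factor of negative length, as in the remark after \cref{def:JTF}, and therefore vanish. So only $\{a\}^{(1^m)}$ with $0\le m\le n$ matter.

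Next I would prove by induction on $m\le n$ that
\begin{align*}
    \{a\}^{(1^m)}\in\Z[\tfrac1{m!}]\bigl[\{a^{\diamond l}\}^{(1)}\mid 1\le l\le m\bigr]\subseteq\Z[\tfrac1{n!}]\bigl[\{a^{\diamond l}\}^{(1)}\mid l\ge1\bigr].
\end{align*}
The base case is $m=0$ (giving $\mathbf 1$) and $m=1$ (giving $\{a\}^{(1)}$ itself). For the step, \eqref{eq:newtonYT} gives
\begin{align*}
    m\,\{a\}^{(1^m)}=\sum_{i=1}^m(-1)^{i-1}\{a^{\diamond i}\}^{(1)}\ast\{a\}^{(1^{m-i})}.
\end{align*}
The right-hand side lies in $\Z[\tfrac1{(m-1)!}][\dots]$ by induction, and $m$ is invertible because $m\mid n!$. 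Dividing by $m$ requires care: the statement is about membership in the image subalgebra, and $m$ acts invertibly on $\Yd$ since $n!\in\K^\times$. So $\{a\}^{(1^m)}=m^{-1}\cdot(\text{RHS})$ is a $\Z[\tfrac1{m!}]$-polynomial in the depth-one tableaux.

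The main point requiring care is the bound on column lengths in the determinant, which I would settle with the negative-length vanishing argument described above. The case $\Q\subseteq\K$ then follows immediately, since $\Z[\tfrac1{n!}]\subseteq\Q$.
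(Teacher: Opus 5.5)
Your proposal is correct and follows essentially the same route as the paper. Both arguments first write $\{a\}^{\lambda/\mu}$ as a Jacobi--Trudi determinant of column tableaux in $\Yd$. Both then note that in each permutation term the column lengths $\lambda'_{\sigma(j)}-\mu'_j-\sigma(j)+j$ sum to $n$ independently of $\sigma$, so a nonzero term involves only $\{a\}^{(1^m)}$ with $0\le m\le n$; this identity, which you should state explicitly, is all that is needed, so the detour through the remark after the definition of $J$ is unnecessary. Both finish by expanding these columns by induction on $m$ via \eqref{eq:newtonYT}, which divides only by $m\le n$.
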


\begin{proof}
By the Jacobi--Trudi formula (\cref{Thm:JTf-homo&LdJ}), the element $\{a\}^{\lambda/\mu}$ equals a determinant in $\Yd$. Its entries are column tableaux $\{a\}^{(1^m)}$, where $\{a\}^{(1^0)}=\mathbf 1$ and $\{a\}^{(1^m)}=0$ for $m<0$. Expanding it gives a $\Z$-linear combination of $\ast$-products of such columns. In the product corresponding to a permutation $\sigma$ of $\{1,\dots,\lambda_1\}$, the sizes of the column factors are $\lambda'_{\sigma(j)}-\mu'_j-\sigma(j)+j$ for $1\leq j\leq\lambda_1$, and they sum to $\sum_i\lambda'_i-\sum_j\mu'_j=n$ independently of $\sigma$. Hence in every nonzero product each factor $\{a\}^{(1^m)}$ satisfies $0\leq m\leq n$. By \cref{cor:expYTresult}, induction on $m$ writes each such $\{a\}^{(1^m)}$ as a polynomial in the depth-one tableaux $\{a^{\diamond l}\}^{(1)}$ ($1\le l\le m$) with coefficients in $\Z[\tfrac1{m!}]\subseteq\Z[\tfrac1{n!}]$: the $m$-th step of \eqref{eq:newtonYT} divides only by $m$, and $\{a\}^{(1^{m-i})}$ already lies in $\Z[\tfrac1{(m-i)!}]$ by hypothesis. Substituting these expansions yields the claim. When $\Q\subseteq\K$, all $1/n!$ are invertible, and the subalgebra is the $\Q$-subalgebra generated by the depth-one tableaux.
\end{proof}
Note that \cref{Thm:YTpoly} and \cref{cor:expYTresult} hold for an arbitrary letter $a$; they do not require $(\K\A,\diamond)$ to be freely generated.
\begin{remark}
Let $a$ be a generator of the freely generated algebra $(\K\A,\diamond)$. Then \cref{Thm:YTpoly} is the expansion of the Schur function $s_{\lambda/\mu}$ in the power sums, under the embedding $\iota_a$. This is why we need $\Z[\tfrac1{n!}]$ in weight $n$. The power sums are only a $\Q$-basis of $\Lambda$. For example, in the expansion of $\{a\}^{(n)}$, the monomial $ \bigl(\{a^{\diamond1}\}^{(1)}\bigr)^{\ast n}$ appears with coefficient $1/n!$. The expansion in non-skew tableaux has no such denominators. For any letter $a$, we have $\{a\}^{\lambda/\mu}=\sum_{\nu\vdash n}c^\lambda_{\mu,\nu}\{a\}^{\nu}$ in $\Yd$. The coefficients $c^\lambda_{\mu,\nu}$ are integers.
\end{remark}

\begin{example}\label{example:poly}
    In the following formulas, we use the explicit description given in \cref{Thm:YTpoly} with identical entries $a=2$. We shall present the formulas for Young tableaux, together with the corresponding case for Schur multiple zeta values.
    \begin{enumerate}[(i)]
        \item  In the smallest case $n=1$, $\{2\}^{(1)}={ \ytableausetup{centertableaux, boxsize=1.3em}        \begin{ytableau}
             2
            \end{ytableau} }$, there is just one choice for $h_1l_1+\dots+h_rl_r=1$.
            \begin{align*}
                 { \ytableausetup{centertableaux, boxsize=1.3em} \begin{ytableau}
                    2
                \end{ytableau} }={ \ytableausetup{centertableaux, boxsize=1.3em} \begin{ytableau}
                    2
                 \end{ytableau} }.
            \end{align*}
        \item For $n=2$, we have two cases { \ytableausetup{centertableaux, boxsize=1.3em}  \begin{ytableau}
             2&2
            \end{ytableau} } and { \ytableausetup{centertableaux, boxsize=1.3em}   \begin{ytableau}
             2\\
             2
            \end{ytableau} }. There are only two possible factors ${ \ytableausetup{centertableaux, boxsize=1.3em} \begin{ytableau}
             2
            \end{ytableau} }^2$ and ${ \ytableausetup{centertableaux, boxsize=1.3em}   \begin{ytableau}
             4
            \end{ytableau} }$. Therefore, we have
            \begin{align*}
                { \ytableausetup{centertableaux, boxsize=1.3em} \begin{ytableau}
             2&2
            \end{ytableau} }=\frac{1}{2}{ \ytableausetup{centertableaux, boxsize=1.3em}                 \begin{ytableau}
             2
            \end{ytableau} }^2+\frac{1}{2}{ \ytableausetup{centertableaux, boxsize=1.3em} \begin{ytableau}
             4
            \end{ytableau} }\quad\text{and}\quad{ \ytableausetup{centertableaux, boxsize=1.3em} \begin{ytableau}
             2\\
             2
            \end{ytableau} }=\frac{1}{2}{ \ytableausetup{centertableaux, boxsize=1.3em} \begin{ytableau}
             2
            \end{ytableau} }^2-\frac{1}{2}{ \ytableausetup{centertableaux, boxsize=1.3em}   \begin{ytableau}
             4
            \end{ytableau} }.
            \end{align*}
        \item For $n=4$, the possible factors are the following: ${ \ytableausetup{centertableaux, boxsize=1.3em} \begin{ytableau}
             2
            \end{ytableau} }^4$, ${ \ytableausetup{centertableaux, boxsize=1.3em} \begin{ytableau}
             2
            \end{ytableau} }^2\ast{ \ytableausetup{centertableaux, boxsize=1.3em} \begin{ytableau}
             4
            \end{ytableau} }$, ${ \ytableausetup{centertableaux, boxsize=1.3em} \begin{ytableau}
             2
            \end{ytableau} }\ast{ \ytableausetup{centertableaux, boxsize=1.3em} \begin{ytableau}
             6
            \end{ytableau} }$, ${ \ytableausetup{centertableaux, boxsize=1.3em} \begin{ytableau}
             4
            \end{ytableau} }^2$, and ${\ytableausetup{centertableaux, boxsize=1.3em} \begin{ytableau}
             8
            \end{ytableau} }$.
            We have the following formulas:
            \begin{align*}
            &{\ytableausetup{centertableaux, boxsize=1.3em} \begin{ytableau}
             2&2&2&2
            \end{ytableau} }=\frac{1}{24}{ \ytableausetup{centertableaux, boxsize=1.3em} \begin{ytableau}
             2
            \end{ytableau} }^4+\frac{1}{4}{ \ytableausetup{centertableaux, boxsize=1.3em} \begin{ytableau}
             2
            \end{ytableau} }^2\ast{ \ytableausetup{centertableaux, boxsize=1.3em} \begin{ytableau}
             4
            \end{ytableau} }+\frac{1}{8}{ \ytableausetup{centertableaux, boxsize=1.3em} \begin{ytableau}
             4
            \end{ytableau} }^2+\frac{1}{3}{ \ytableausetup{centertableaux, boxsize=1.3em} \begin{ytableau}
             2
            \end{ytableau} }\ast{ \ytableausetup{centertableaux, boxsize=1.3em} \begin{ytableau}
             6
            \end{ytableau} }+\frac{1}{4}{\ytableausetup{centertableaux, boxsize=1.3em} \begin{ytableau}
             8
            \end{ytableau} },\\
                &{ \ytableausetup{centertableaux, boxsize=1.3em} \begin{ytableau}
             2&2&2\\
             2
            \end{ytableau} }=\frac{1}{8}{ \ytableausetup{centertableaux, boxsize=1.3em} \begin{ytableau}
             2
            \end{ytableau} }^4+\frac{1}{4}{ \ytableausetup{centertableaux, boxsize=1.3em} \begin{ytableau}
             2
            \end{ytableau} }^2\ast{ \ytableausetup{centertableaux, boxsize=1.3em} \begin{ytableau}
             4
            \end{ytableau} }-\frac{1}{8}{ \ytableausetup{centertableaux, boxsize=1.3em} \begin{ytableau}
             4
            \end{ytableau} }^2-\frac{1}{4}{\ytableausetup{centertableaux, boxsize=1.3em} \begin{ytableau}
             8
            \end{ytableau} },\\
            &{\ytableausetup{centertableaux, boxsize=1.3em} \begin{ytableau}
             2&2\\
             2&2
            \end{ytableau} }=\frac{1}{12}{ \ytableausetup{centertableaux, boxsize=1.3em} \begin{ytableau}
             2
            \end{ytableau} }^4+\frac{1}{4}{ \ytableausetup{centertableaux, boxsize=1.3em} \begin{ytableau}
             4
            \end{ytableau} }^2-\frac{1}{3}{ \ytableausetup{centertableaux, boxsize=1.3em} \begin{ytableau}
             2
            \end{ytableau} }\ast{ \ytableausetup{centertableaux, boxsize=1.3em} \begin{ytableau}
             6
            \end{ytableau} },\\
             &{\ytableausetup{centertableaux, boxsize=1.3em} \begin{ytableau}
             \none&2&2\\
             2&2
            \end{ytableau} }=\frac{5}{24}{ \ytableausetup{centertableaux, boxsize=1.3em} \begin{ytableau}
             2
            \end{ytableau} }^4+\frac{1}{4}{ \ytableausetup{centertableaux, boxsize=1.3em} \begin{ytableau}
             2
            \end{ytableau} }^2\ast{ \ytableausetup{centertableaux, boxsize=1.3em} \begin{ytableau}
             4
            \end{ytableau} }+\frac{1}{8}{ \ytableausetup{centertableaux, boxsize=1.3em} \begin{ytableau}
             4
            \end{ytableau} }^2-\frac{1}{3}{ \ytableausetup{centertableaux, boxsize=1.3em} \begin{ytableau}
             2
            \end{ytableau} }\ast{ \ytableausetup{centertableaux, boxsize=1.3em} \begin{ytableau}
             6
            \end{ytableau} }-\frac{1}{4}{\ytableausetup{centertableaux, boxsize=1.3em} \begin{ytableau}
             8
            \end{ytableau} },\\
             &{\ytableausetup{centertableaux, boxsize=1.3em} \begin{ytableau}
             \none&2\\
             2&2\\
             2
            \end{ytableau} }=\frac{5}{24}{ \ytableausetup{centertableaux, boxsize=1.3em} \begin{ytableau}
             2
            \end{ytableau} }^4-\frac{1}{4}{ \ytableausetup{centertableaux, boxsize=1.3em} \begin{ytableau}
             2
            \end{ytableau} }^2\ast{ \ytableausetup{centertableaux, boxsize=1.3em} \begin{ytableau}
             4
            \end{ytableau} }+\frac{1}{8}{ \ytableausetup{centertableaux, boxsize=1.3em} \begin{ytableau}
             4
            \end{ytableau} }^2-\frac{1}{3}{ \ytableausetup{centertableaux, boxsize=1.3em} \begin{ytableau}
             2
            \end{ytableau} }\ast{ \ytableausetup{centertableaux, boxsize=1.3em} \begin{ytableau}
             6
            \end{ytableau} }+\frac{1}{4}{\ytableausetup{centertableaux, boxsize=1.3em} \begin{ytableau}
             8
            \end{ytableau} },\\
           &{\ytableausetup{centertableaux, boxsize=1.3em} \begin{ytableau}
             2&2\\
             2\\
             2
            \end{ytableau} }=\frac{1}{8}{ \ytableausetup{centertableaux, boxsize=1.3em} \begin{ytableau}
             2
            \end{ytableau} }^4-\frac{1}{4}{ \ytableausetup{centertableaux, boxsize=1.3em} \begin{ytableau}
             2
            \end{ytableau} }^2\ast{ \ytableausetup{centertableaux, boxsize=1.3em} \begin{ytableau}
             4
            \end{ytableau} }-\frac{1}{8}{ \ytableausetup{centertableaux, boxsize=1.3em} \begin{ytableau}
             4
            \end{ytableau} }^2+\frac{1}{4}{\ytableausetup{centertableaux, boxsize=1.3em} \begin{ytableau}
             8
            \end{ytableau} },\\
            &{\ytableausetup{centertableaux, boxsize=1.3em} \begin{ytableau}
             2\\
             2\\
             2\\
             2
            \end{ytableau} }=\frac{1}{24}{ \ytableausetup{centertableaux, boxsize=1.3em} \begin{ytableau}
             2
            \end{ytableau} }^4-\frac{1}{4}{ \ytableausetup{centertableaux, boxsize=1.3em} \begin{ytableau}
             2
            \end{ytableau} }^2\ast{ \ytableausetup{centertableaux, boxsize=1.3em} \begin{ytableau}
             4
            \end{ytableau} }+\frac{1}{8}{ \ytableausetup{centertableaux, boxsize=1.3em} \begin{ytableau}
             4
            \end{ytableau} }^2+\frac{1}{3}{ \ytableausetup{centertableaux, boxsize=1.3em} \begin{ytableau}
             2
            \end{ytableau} }\ast{ \ytableausetup{centertableaux, boxsize=1.3em} \begin{ytableau}
             6
            \end{ytableau} }-\frac{1}{4}{\ytableausetup{centertableaux, boxsize=1.3em} \begin{ytableau}
             8
            \end{ytableau} }.
            \end{align*}
    \end{enumerate}
\end{example}

By applying the linearization map $L_{\diamond}$, any Young tableau $\mathbf{h}\in\Y$ becomes a $\Z$-linear combination of column Young tableaux in $\Yd$. This provides a fundamental method to write a skew Young tableau as a $\Z$-linear combination of non-skew Young tableaux in $\Yd$. However, under this procedure, the entries of the resulting tableaux usually become quite complicated.

We therefore ask whether there exist special types of skew Young tableaux for which such a decomposition can be written in a simpler and nicer form. The answer is yes. In the most special case where all entries are equal, any skew Young tableau $\mathbf{h}$ with all entries equal to $a$ can indeed be expressed as a linear combination of non-skew Young tableaux whose entries are all $a$.
\section{Further Examples}\label{sec:Example}
This section illustrates the scope of the Young tableaux algebra and of multiple Schur series. We discuss five classes of Young tableaux algebras: Schur multiple zeta values, Schur sum of monotangent functions, Schur multiple Eisenstein series, the $q$-analogue of Schur multiple zeta values, and the Schur multiple $L$-values.  In each case, we will explain how \cref{def:MSS} can be applied. One important application is the Fourier expansion of the Schur multiple Eisenstein series. \cref{prop.MSSconv} relates this expansion to the Schur multiple zeta values and the Schur sum of monotangent functions. We also explain the application of \cref{Thm:YTpoly} in each case.

In the first examples below, we use the basic setup $\K=\Q$, $\A=\N$, and $\diamond=+$; later subsections will modify this setup when needed. We write $\mathcal{H}^1=\Q\boxed{\N}$ for the algebra generated by shifted connected Young tableaux with positive entries. $\mathcal{H}^0$ is the subalgebra of $\mathcal{H}^1$ spanned by the Young tableaux $\mathbf{h}=(h_{ij})$ with
\begin{align*}
            \begin{cases}
                \text{$h_{ij}\ge 1$ for all $(i,j)\in D(\Sh(\mathbf{h})) \setminus C(\Sh(\mathbf{h}))$ }, \\[3pt]
                 \text{$h_{ij}\ge2$ for all $(i,j)\in C(\Sh(\mathbf{h}))$}.
            \end{cases}
        \end{align*}
       Here $C(\lambda/\mu):=\{(i,j)\in D(\lambda/\mu)\mid (i+1,j)\notin D(\lambda/\mu),\ (i,j+1)\notin D(\lambda/\mu)\}$ is the set of all corners of the shape $\lambda/\mu$. This span is closed under $\ast$, since the corners of a disjoint union of shapes are the corners of its parts. Equivalently, $\mathcal{H}^0$ is generated by the shifted connected Young tableaux whose corner entries are at least $2$.
   For the sub-alphabet $\N_{\geq2}$, the subalgebra $\mathcal{H}^2:=\Q\boxed{\N_{\geq2}}\subset\mathcal{H}^1$ is generated by shifted connected Young tableaux with entries $k\geq2$. Let $\Hd^i$ denote the linearized quotient corresponding to $\mathcal{H}^i$, given by \eqref{eq:defKAdia}, for $i=0, 1, 2$.
\subsection{Schur multiple zeta values}\label{sec:examSMZV}
We first consider Schur multiple zeta values. These are values of the Schur multiple zeta function at integer points, introduced in \cite{NPY}. They can be elegantly redefined using our multiple Schur series framework.

\begin{definition}[Truncated Schur multiple zeta values]
    Let $\X=\Z^+_M=\{1,\dots,M\}$ with the standard ordering. Set $\A=\N$, and $\calP=\R$. Consider the family of $\K$-algebra homomorphisms $\{f_m\}_{m\in\Z^+_M}$ given by
    \begin{align*}
        f_m:\K\N&\to\R\\
        h&\mapsto\frac{1}{m^h}.
    \end{align*}
    The multiple Schur series $\zeta_M(\mathbf{h}):=F_{\Z^+_M}(\mathbf{h})$ associated with this family is called the \textit{truncated Schur multiple zeta value}.
\end{definition}

Let $\mathbf{h}\in\Hd^0$ be a Young tableau. Nakasuji, Phuksuwan, and Yamasaki showed in \cite{NPY} that the limit as $M\to\infty$ exists and the sum converges absolutely. Therefore, we can naturally define the Schur multiple zeta value as this limit:
\begin{align*}
    \zeta(\mathbf{h}) := \lim_{M\to\infty} \zeta_M(\mathbf{h})=\sum_{(m_{i,j})\in\SSYT(\Sh(\mathbf{h}),\N)}\prod_{(i,j)\in D(\Sh(\mathbf{h}))}\frac{1}{m_{i,j}^{h_{i,j}}}.
\end{align*}
By \cref{lem:F} and the absolute convergence of the series on $\Hd^0$, the evaluation map $\zeta: \Hd^0 \to \R$ is a well-defined $\Q$-algebra homomorphism.

Let $\Hd^{2,\operatorname{diag}}\subset\Hd^2\subset\Hd^0$ be the subalgebra generated by diagonal constant Young tableaux, i.e., those whose entries are constant along each diagonal. Applying the homomorphism $\zeta$ to the Jacobi--Trudi formula for the Young tableaux algebra (\cref{Thm:JTf-homo&LdJ}) recovers the Jacobi--Trudi formula for Schur multiple zeta values. This was originally proved as Theorem 1.1 (2) in \cite{NPY}.

\begin{corollary}[{\cite[Theorem 1.1 (2)]{NPY}}]
    For any diagonal constant Young tableau $\mathbf{h} \in \Hd^{2,\operatorname{diag}}$ with shape $\lambda/\mu$ and diagonal entries $a_n$ ($n \in \Z$), we have
    \begin{align*}
        \zeta(\mathbf{h}) = \det\left[ \zeta\left(a_{-\mu'_j+j-1},\cdots, a_{-\mu'_j+j-(\lambda'_i-\mu'_j-i+j)}\right) \right]_{1\leq i,j\leq s},
    \end{align*}
    where $s=\lambda_1$ and the elements in the determinant are evaluated as multiple zeta values through the linearization map $L_{\diamond}$.
\end{corollary}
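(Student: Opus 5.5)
The plan is to apply the algebra homomorphism $\zeta:\Hd^0\to\R$ to the identity of \cref{Thm:JTf-homo&LdJ}. Let $\mathbf h\in\Hd^{2,\operatorname{diag}}$ have shape $\lambda/\mu$ and diagonal entries $a_n\ge2$. By \cref{Thm:JTf-homo&LdJ} we have $J(\mathbf h)-\mathbf h\in\ker L_\diamond$. In other words, in the quotient $\Yd$ (here $\Hd^2$) the class of $\mathbf h$ equals the determinant
\begin{align*}
J(\mathbf h)=\det\bigl[\mathsf C[\alpha_j,\beta_i]\bigr]_{1\le i,j\le s},
\end{align*}
whose entries are column tableaux. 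This determinant is a $\Z$-linear combination of $\ast$-products of column tableaux, all of whose entries are among the $a_n$, hence $\ge2$. Every factor therefore lies in $\mathcal H^2\subset\mathcal H^0$, and so do $\mathbf h$ and $J(\mathbf h)$.

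Next I would check that $\zeta$ is well defined on these elements and factors through the quotient. On the truncated level, $\zeta_M=F_{\Z^+_M}$ is built from the $\diamond$-homomorphisms $h\mapsto m^{-h}$. By \cref{lem:F} it is therefore an algebra homomorphism $\Yd\to\R$, i.e.\ it kills $\ker L_\diamond$. Hence
\begin{align*}
\zeta_M(\mathbf h)=\zeta_M(J(\mathbf h))=\det\bigl[\zeta_M(\mathsf C[\alpha_j,\beta_i])\bigr]
\end{align*}
for every $M$, using multiplicativity of $\zeta_M$ on $\ast$-products. For a column tableau, the unique semi-standard decomposition (proof of \cref{prop:L-alghom}) gives
\begin{align*}
\zeta_M(\mathsf C[p,q])=\zeta_M(a_p,a_{p-1},\dots,a_q).
\end{align*}
Here I use the column convention that a column read top to bottom corresponds to strictly increasing summation variables, which matches $\zeta(h_1,\dots,h_r)$ with $0<n_1<\dots<n_r$. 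The conventions $\mathsf C=\mathbf 1\mapsto1$ and $\mathsf C=0\mapsto0$ are respected.

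Finally, let $M\to\infty$. Each column value is a multiple zeta value with all entries $\ge2$, so it converges absolutely. The left side converges by the absolute convergence result of \cite{NPY} on $\Hd^0$. Since the determinant is a finite polynomial expression, the limit passes through it. This yields the stated formula, with $\zeta(a_{-\mu'_j+j-1},\dots,a_{-\mu'_j+j-(\lambda'_i-\mu'_j-i+j)})$ as the $(i,j)$ entry, since $\alpha_j=-\mu'_j+j-1$ and the length is $m_{i,j}$.

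The only delicate point is the bookkeeping of convergence and the quotient. One must note that the identity holds at each truncation $M$, where no convergence issue arises, before taking the limit. This avoids needing to know that $\ker L_\diamond\cap\mathcal H^0$ is spanned by convergent relations. The rest is a direct specialization.
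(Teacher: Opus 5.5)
Your proposal is correct and is essentially the paper's argument: the paper applies the algebra homomorphism $\zeta$ to the identity $J(\mathbf h)-\mathbf h\in\ker L_\diamond$ of \cref{Thm:JTf-homo&LdJ}. It takes $\zeta$ to be well defined on $\Hd^0$ by \cref{lem:F} and absolute convergence. Your step of first establishing the identity for each truncation $\zeta_M$ and then letting $M\to\infty$ makes explicit why $\zeta$ descends to the quotient, which the paper leaves implicit.
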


By \cref{Thm:YTpoly}, any skew Young tableau with identical entries can be expressed as a polynomial in depth-one tableaux. Applying the evaluation homomorphism $\zeta$ to this result yields the expansion formulas for Schur multiple zeta values, as in \cref{example:poly}. This generalizes the explicit relations presented in \cite[Example 2.9]{NPY}. Specifically, take the constant entry $a=h \ge 2$. Then the Schur multiple zeta value of shape $\lambda/\mu$ is a polynomial in the Riemann zeta values $\zeta(h), \zeta(2h), \dots, \zeta(nh)$. \cref{Thm:YTpoly} provides an algebraic proof of this reduction.

\begin{example}[{\cite[Example 2.9]{NPY}}]
    For Schur multiple zeta values, we have
    \begin{align*}
&\zeta\left({\ytableausetup{centertableaux, boxsize=1.3em} \begin{ytableau}
 2&2&2&2
\end{ytableau} }\right)
= \frac{1}{24}\,\zeta\left({ \ytableausetup{centertableaux, boxsize=1.3em} \begin{ytableau}
 2
\end{ytableau} }\right)^4
+ \frac{1}{4}\,\zeta\left({ \ytableausetup{centertableaux, boxsize=1.3em} \begin{ytableau}
 2
\end{ytableau} }\right)^2  \zeta\left({ \ytableausetup{centertableaux, boxsize=1.3em} \begin{ytableau}
 4
\end{ytableau} }\right)
+ \frac{1}{8}\,\zeta\left({ \ytableausetup{centertableaux, boxsize=1.3em} \begin{ytableau}
 4
\end{ytableau} }\right)^2 + \frac{1}{3}\,\zeta\left({ \ytableausetup{centertableaux, boxsize=1.3em} \begin{ytableau}
 2
\end{ytableau} }\right)  \zeta\left({ \ytableausetup{centertableaux, boxsize=1.3em} \begin{ytableau}
 6
\end{ytableau} }\right)
+ \frac{1}{4}\,\zeta\left({\ytableausetup{centertableaux, boxsize=1.3em} \begin{ytableau}
 8
\end{ytableau} }\right),\\[0.5em]
&\zeta\left({ \ytableausetup{centertableaux, boxsize=1.3em} \begin{ytableau}
 2&2&2\\
 2
\end{ytableau} }\right)
= \frac{1}{8}\,\zeta\left({ \ytableausetup{centertableaux, boxsize=1.3em} \begin{ytableau}
 2
\end{ytableau} }\right)^4
+ \frac{1}{4}\,\zeta\left({ \ytableausetup{centertableaux, boxsize=1.3em} \begin{ytableau}
 2
\end{ytableau} }\right)^2  \zeta\left({ \ytableausetup{centertableaux, boxsize=1.3em} \begin{ytableau}
 4
\end{ytableau} }\right)
- \frac{1}{8}\,\zeta\left({ \ytableausetup{centertableaux, boxsize=1.3em} \begin{ytableau}
 4
\end{ytableau} }\right)^2
- \frac{1}{4}\,\zeta\left({\ytableausetup{centertableaux, boxsize=1.3em} \begin{ytableau}
 8
\end{ytableau} }\right),\\[0.5em]
&\zeta\left({\ytableausetup{centertableaux, boxsize=1.3em} \begin{ytableau}
 2&2\\
 2&2
\end{ytableau} }\right)
= \frac{1}{12}\,\zeta\left({ \ytableausetup{centertableaux, boxsize=1.3em} \begin{ytableau}
 2
\end{ytableau} }\right)^4
+ \frac{1}{4}\,\zeta\left({ \ytableausetup{centertableaux, boxsize=1.3em} \begin{ytableau}
 4
\end{ytableau} }\right)^2
- \frac{1}{3}\,\zeta\left({ \ytableausetup{centertableaux, boxsize=1.3em} \begin{ytableau}
 2
\end{ytableau} }\right)  \zeta\left({ \ytableausetup{centertableaux, boxsize=1.3em} \begin{ytableau}
 6
\end{ytableau} }\right),\\[0.5em]
&\zeta\left({\ytableausetup{centertableaux, boxsize=1.3em} \begin{ytableau}
 \none&2&2\\
 2&2
\end{ytableau} }\right)
= \frac{5}{24}\,\zeta\left({ \ytableausetup{centertableaux, boxsize=1.3em} \begin{ytableau}
 2
\end{ytableau} }\right)^4
+ \frac{1}{4}\,\zeta\left({ \ytableausetup{centertableaux, boxsize=1.3em} \begin{ytableau}
 2
\end{ytableau} }\right)^2  \zeta\left({ \ytableausetup{centertableaux, boxsize=1.3em} \begin{ytableau}
 4
\end{ytableau} }\right)
+ \frac{1}{8}\,\zeta\left({ \ytableausetup{centertableaux, boxsize=1.3em} \begin{ytableau}
 4
\end{ytableau} }\right)^2
- \frac{1}{3}\,\zeta\left({ \ytableausetup{centertableaux, boxsize=1.3em} \begin{ytableau}
 2
\end{ytableau} }\right)  \zeta\left({ \ytableausetup{centertableaux, boxsize=1.3em} \begin{ytableau}
 6
\end{ytableau} }\right)
- \frac{1}{4}\,\zeta\left({\ytableausetup{centertableaux, boxsize=1.3em} \begin{ytableau}
 8
\end{ytableau} }\right),\\[0.5em]
&\zeta\left({\ytableausetup{centertableaux, boxsize=1.3em} \begin{ytableau}
 \none&2\\
 2&2\\
 2
\end{ytableau} }\right)
= \frac{5}{24}\,\zeta\left({ \ytableausetup{centertableaux, boxsize=1.3em} \begin{ytableau}
 2
\end{ytableau} }\right)^4
- \frac{1}{4}\,\zeta\left({ \ytableausetup{centertableaux, boxsize=1.3em} \begin{ytableau}
 2
\end{ytableau} }\right)^2  \zeta\left({ \ytableausetup{centertableaux, boxsize=1.3em} \begin{ytableau}
 4
\end{ytableau} }\right)
+ \frac{1}{8}\,\zeta\left({ \ytableausetup{centertableaux, boxsize=1.3em} \begin{ytableau}
 4
\end{ytableau} }\right)^2 
- \frac{1}{3}\,\zeta\left({ \ytableausetup{centertableaux, boxsize=1.3em} \begin{ytableau}
 2
\end{ytableau} }\right)  \zeta\left({ \ytableausetup{centertableaux, boxsize=1.3em} \begin{ytableau}
 6
\end{ytableau} }\right)
+ \frac{1}{4}\,\zeta\left({\ytableausetup{centertableaux, boxsize=1.3em} \begin{ytableau}
 8
\end{ytableau} }\right),\\[0.5em]
&\zeta\left({\ytableausetup{centertableaux, boxsize=1.3em} \begin{ytableau}
 2&2\\
 2\\
 2
\end{ytableau} }\right)
= \frac{1}{8}\,\zeta\left({ \ytableausetup{centertableaux, boxsize=1.3em} \begin{ytableau}
 2
\end{ytableau} }\right)^4
- \frac{1}{4}\,\zeta\left({ \ytableausetup{centertableaux, boxsize=1.3em} \begin{ytableau}
 2
\end{ytableau} }\right)^2  \zeta\left({ \ytableausetup{centertableaux, boxsize=1.3em} \begin{ytableau}
 4
\end{ytableau} }\right)
- \frac{1}{8}\,\zeta\left({ \ytableausetup{centertableaux, boxsize=1.3em} \begin{ytableau}
 4
\end{ytableau} }\right)^2
+ \frac{1}{4}\,\zeta\left({\ytableausetup{centertableaux, boxsize=1.3em} \begin{ytableau}
 8
\end{ytableau} }\right),\\[0.5em]
&\zeta\left({\ytableausetup{centertableaux, boxsize=1.3em} \begin{ytableau}
 2\\
 2\\
 2\\
 2
\end{ytableau} }\right)
= \frac{1}{24}\,\zeta\left({ \ytableausetup{centertableaux, boxsize=1.3em} \begin{ytableau}
 2
\end{ytableau} }\right)^4
- \frac{1}{4}\,\zeta\left({ \ytableausetup{centertableaux, boxsize=1.3em} \begin{ytableau}
 2
\end{ytableau} }\right)^2  \zeta\left({ \ytableausetup{centertableaux, boxsize=1.3em} \begin{ytableau}
 4
\end{ytableau} }\right)
+ \frac{1}{8}\,\zeta\left({ \ytableausetup{centertableaux, boxsize=1.3em} \begin{ytableau}
 4
\end{ytableau} }\right)^2
+ \frac{1}{3}\,\zeta\left({ \ytableausetup{centertableaux, boxsize=1.3em} \begin{ytableau}
 2
\end{ytableau} }\right)  \zeta\left({ \ytableausetup{centertableaux, boxsize=1.3em} \begin{ytableau}
 6
\end{ytableau} }\right)
-\frac{1}{4}\,\zeta\left({\ytableausetup{centertableaux, boxsize=1.3em} \begin{ytableau}
 8
\end{ytableau} }\right).
\end{align*}
\end{example}

\subsection{Schur sum of monotangent functions} \label{sec:examSMonoF}
We start with the Fourier expansion of Eisenstein series. For even $h\ge2$, the series is defined by
\begin{align*}
    \G(h;\tau)=\frac{1}{2}\sum_{\substack{(m,n)\in\Z^2\\(m,n)\neq(0,0)}}\frac{1}{(m\tau+n)^h}=\frac{1}{2}\sum_{n\neq0}\frac{1}{n^h}+\sum_{m=1}^{\infty}\Psi(h;m\tau),
\end{align*}
where for $h=2$ we use the Eisenstein summation. Here $\Psi(h;x)$ is the monotangent function given by
\begin{align*}
    \Psi(h;x)=\sum_{d\in\Z}\frac{1}{(x+d)^h}.
\end{align*}
In \cite{Bou}, O.~Bouillot introduced and studied a multiple version of the monotangent function $\Psi(h_1,\dots,h_r;x)$, called the multitangent function, appearing in the calculation of the Fourier expansion of classical Eisenstein series, which is given by
\begin{align*}
    \Psi(h_1,\dots,h_r;x)=\sum_{\substack{n_1<n_2<\cdots<n_r\\n_i\in\Z}}\frac{1}{(x+n_1)^{h_1}\cdots(x+n_r)^{h_r}},
\end{align*}
for $h_1,\dots,h_r\geq2$.  One of the main results of \cite{Bou} is the reduction of multitangent functions to monotangent functions. Theorem~3 of \cite{Bou} gives an explicit formula for writing multitangent functions as a multiple-zeta-linear combination of monotangent functions. It is crucial for the calculation of the Fourier expansion of multiple Eisenstein series in \cite{B}. Similarly, we can naturally consider the Schur generalization $\Psi(\mathbf{h};x)$:
\begin{definition}
    Let $\mathbf{h}\in\Hd^1$. For $N>0$ and $n\in\Z_N=\{-N,\dots,N\}$, the truncated Schur multitangent function $\Psi_{N}(\mathbf{h};x)$ is defined by the family
     \begin{align*}
        f_n: \K\N &\rightarrow \mathcal{O}(\C\setminus\Z)\\
         h &\longmapsto \frac{1}{(n+x)^h},
    \end{align*}
    i.e.,
    \begin{align*}
        \Psi_{N}(\mathbf{h};x)=\sum_{n_{i,j}\in \SSYT(\Sh(\mathbf{h}),\Z_N) } \prod_{(i,j) \in D(\Sh(\mathbf{h}))} \frac{1}{( n_{i,j}+x)^{h_{i,j}}}.
    \end{align*}
    We set $\Psi_N(\varnothing;x)=1$. Let
\begin{align*}
    \widetilde C(\lambda/\mu):=\{(i,j)\in D(\lambda/\mu)\mid (i-1,j)\notin D(\lambda/\mu),\ (i,j-1)\notin D(\lambda/\mu)\}.
\end{align*}
 Then the following limit is absolutely convergent if $h_{i,j}\geq2$ for all $(i,j)\in C(\Sh(\mathbf{h}))\cup\widetilde C(\Sh(\mathbf{h}))$, which gives the Schur multitangent function $\Psi(\mathbf{h};x)$:
    \begin{align*}
        \Psi(\mathbf{h};x)=\lim_{N\to\infty}\Psi_{N}(\mathbf{h};x)=\sum_{n_{i,j}\in \SSYT(\Sh(\mathbf{h}),\Z) } \prod_{(i,j) \in D(\Sh(\mathbf{h}))} \frac{1}{( n_{i,j}+x)^{h_{i,j}}},
    \end{align*}
    and again we set $\Psi(\varnothing;x)=1$.
\end{definition}
Indeed, the entries in $C(\Sh(\mathbf{h}))$ control the variables tending to $+\infty$, and the entries in $\widetilde C(\Sh(\mathbf{h}))$ control the variables tending to $-\infty$. For a single column, this recovers the condition $h_1,h_r\geq2$ of \cite{Bou}. In particular, the condition holds for every $\mathbf{h}\in\Hd^2$. 

This function is discussed in \cite{Yu}. There, it is related to the Fourier expansion of the Schur multiple Eisenstein series, which will be introduced in the next subsection. We will also consider another Schur generalization of the monotangent function, the Schur sum of the monotangent function, denoted by $\check{g}$. 

To define these objects, set $\Z_M = \{-M,\dots,-1,0,1,\dots,M\}$. For $M,N\geq 1$ and $\tau \in \mathbb{H}$, consider the set $X^\tau_{M,N} = \Z_M\tau + \Z_N $. In this set, we consider the order $m_1 \tau + n_1 \prec m_2 \tau + n_2 :\Leftrightarrow m_1 < m_2 \vee (m_1=m_2 \wedge n_1<n_2)$. We obtain a finite ordered set $\mathcal{X}^\tau_{M,N} = (X^\tau_{M,N}, \prec)$. We also define the subset of ``positive'' lattice points by \[X^{\tau,>0}_{M,N} = \{ \lambda \in X^\tau_{M,N} \mid 0 \prec \lambda \}. \]  
\begin{definition} Set $X^{\tau,\mathbb{H}}_{M,N}:=\{\lambda=m\tau+n \in X^{\tau,>0}_{M,N}|m\geq1\}=X^{\tau,>0}_{M,N}\cap\mathbb{H}$. Let $\mathcal{X}^{\tau,\mathbb{H}}_{M,N}$ be the corresponding ordered set. Then for $\mathbf{h}\in\Hd^1$, 
    \begin{align*}
        \check{g}_{M,N}(\mathbf{h};\tau):=\sum_{(m_{i,j} \tau + n_{i,j}) \in \SSYT(\Sh(\mathbf{h}),\mathcal{X}^{\tau,\mathbb{H}}_{M,N}) } \prod_{(i,j) \in D(\Sh(\mathbf{h}))} \frac{1}{(m_{i,j} \tau + n_{i,j})^{h_{i,j}}}\,.
    \end{align*}
    For $\mathbf{h}\in\Hd^2$, the following limit exists:
    \begin{align*}
        \check{g}(\mathbf{h};\tau):=\lim_{M\to\infty}\lim_{N\to\infty}\check{g}_{M,N}(\mathbf{h};\tau).
    \end{align*}
\end{definition}
As an application of \cref{prop.MSSconv}, we have
\begin{corollary} \label{Lem:checkgconv}
    For $M,N>0$, we have
    \begin{align*}
        \check{g}_{M,N}(-;\tau)=\Psi_N(-;\tau)\star\Psi_N(-;2\tau)\star\cdots\star\Psi_N(-;M\tau).
    \end{align*}
\end{corollary}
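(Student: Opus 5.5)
We apply \cref{prop.MSSconv} to the ordered set $\mathcal{X}^{\tau,\mathbb{H}}_{M,N}$ with the family $f_\lambda(h)=\lambda^{-h}$ for $\lambda=m\tau+n$. Each $f_\lambda:(\K\N,+)\to\mathcal{O}(\mathbb{H})$ is a $\diamond$-algebra homomorphism for $\diamond=+$, since $\lambda^{-(h_1+h_2)}=\lambda^{-h_1}\lambda^{-h_2}$. This is exactly the hypothesis of \cref{prop.MSSconv}, and by definition $\check g_{M,N}(-;\tau)$ is the multiple Schur series $F_{\mathcal{X}^{\tau,\mathbb{H}}_{M,N}}$ attached to this family.

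Next we decompose $\mathcal{X}^{\tau,\mathbb{H}}_{M,N}$ into consecutive segments. We have $X^{\tau,\mathbb{H}}_{M,N}=\{m\tau+n\mid 1\le m\le M,\ -N\le n\le N\}$. Set $\X_m=\{m\tau+n\mid n\in\Z_N\}$ for $1\le m\le M$. Under the lexicographic order $\prec$, every element of $\X_m$ precedes every element of $\X_{m'}$ when $m<m'$. Hence $\X=\X_1\sqcup\cdots\sqcup\X_M$ is a decomposition into consecutive parts. \cref{prop.MSSconv} then gives $\check g_{M,N}=F_1\star\cdots\star F_M$, where $F_m$ is the multiple Schur series of the restricted family on $\X_m$.

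It remains to identify $F_m$ with $\Psi_N(-;m\tau)$. The map $n\mapsto m\tau+n$ is an order isomorphism $(\Z_N,<)\to(\X_m,\prec)$, because within $\X_m$ the order compares only $n$. Under this bijection, semi-standard fillings with entries in $\X_m$ correspond to semi-standard fillings with entries in $\Z_N$. The weights also agree, since $f_{m\tau+n}(h)=(n+m\tau)^{-h}$ is the truncated multitangent weight at $x=m\tau$. Therefore $F_m(\mathbf h)=\Psi_N(\mathbf h;m\tau)$ on every tableau.

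Both sides are then maps on $\Hd^1$ (\cref{thm:MSS-character}), and the convolution uses the cutting coproduct on $\Yd$. The only point needing care is the bookkeeping above: that the order on the $\mathbb{H}$-part really splits into blocks indexed by $m$, and that each block is order-isomorphic to $\Z_N$. The rest is immediate from the cited proposition.
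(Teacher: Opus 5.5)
Your proposal is correct and is the argument the paper intends. The paper states the corollary as a direct application of \cref{prop.MSSconv}: split $\mathcal{X}^{\tau,\mathbb{H}}_{M,N}$ into the consecutive blocks $\{m\tau+n\mid n\in\Z_N\}$ for $1\le m\le M$, and identify each block's multiple Schur series with $\Psi_N(-;m\tau)$. Your checks supply the bookkeeping the paper leaves implicit: the lexicographic order puts the block with smaller $m$ first (matching the upper-left/lower-right convention of $\Delta_{cut}$), and each block is order-isomorphic to $\Z_N$ with weights $(n+m\tau)^{-h}$.
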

After passing to the limits in the order specified above, and using the reduction of Schur multitangent functions studied in \cite{Yu}, this convolution decomposition yields a $q$-expansion of $\check g$ as a $\Q$-linear combination of multiple zeta values times the series $(2\pi i)^{|\mathbf{h}'|}g(\mathbf h')$ defined in \eqref{eq:gBbbk}.

\subsection{Schur multiple Eisenstein series}\label{sec:examSMES}
For the finite ordered set $\mathcal{X}^{\tau,>0}_{M,N} = (X^{\tau,>0}_{M,N}, \prec)$ we define the family $(f_{\lambda})_{\lambda\in\mathcal{X}^{\tau,>0}_{M,N}}$ by
\begin{align*}
    f_{m\tau+n}: \K\N &\rightarrow \C\\
    h &\longmapsto \frac{1}{(m\tau +n)^h},
\end{align*}
and define the {\emph{(truncated) Schur multiple Eisenstein series}} for $\mathbf{h}=\left({h_{i,j}}\right) \in \Hd^1$ by $\mathbb{G}_{M,N}(\mathbf{h};\tau)=F_{\mathcal{X}^{\tau,>0}_{M,N}}(\mathbf{h})$. In other words, these are given for $\mathbf{h} \in \Hd^1$ by
\begin{align}
    \mathbb{G}_{M,N}(\mathbf{h}; \tau) = \sum_{(m_{i,j} \tau + n_{i,j}) \in \SSYT(\Sh(\mathbf{h}),\mathcal{X}^{\tau,>0}_{M,N}) } \prod_{(i,j) \in D(\Sh(\mathbf{h}))} \frac{1}{(m_{i,j} \tau + n_{i,j})^{h_{i,j}}}\,.
    \label{eq:defGn}
\end{align}
One can then show that the following limit exists:
\begin{propdef}
    Let $\mathbf{h}\in\Hd^2$. The \emph{{Schur multiple Eisenstein series $\mathbb{G}(\mathbf{h}; \tau)$}} is defined by the following limit:
    \begin{align}
       \mathbb{G}(\mathbf{h}; \tau) := \lim_{M\rightarrow \infty}\lim_{N\rightarrow \infty}   \mathbb{G}_{M,N}(\mathbf{h}; \tau)\,.
       \label{eq:bfG}
    \end{align}
\end{propdef}
Without the limiting procedure, absolute convergence requires stronger conditions. Namely, one needs $h_{i,j}\geq3$ for $(i,j)\in C(\Sh(\mathbf{h}))$ and $h_{i,j}\geq2$ for $(i,j)\in D(\Sh(\mathbf{h}))\setminus C(\Sh(\mathbf{h}))$. The order of the limits in \eqref{eq:bfG} ensures convergence even in the case where  $h_{i,j}=2$ for $(i,j)\in C(\Sh(\mathbf{h}))$. The argument is similar to that in \cite{BT}. This approach aligns with the usual method for defining the quasimodular form $\G(2;\tau)$.

Let $\mathbb{H}:= \{ \tau \in \mathbb{C} \mid \Im(\tau) > 0 \}$ be the upper half-plane. For any $\mathbf{h}\in\Hd^2$, $\G(\mathbf{h};\tau)$ is a holomorphic function on the upper half-plane, i.e., $\G(\mathbf{h};\tau)\in\OH$.
\begin{proposition}
    $\G(-;\tau):\Hd^2\to\OH$ is a $\K$-algebra homomorphism; explicitly, for any $\mathbf{h}_1,\mathbf{h}_2\in\Hd^2$,
    \begin{align*}
        \G(\mathbf{h}_1;\tau)\G(\mathbf{h}_2;\tau)=\G(\mathbf{h}_1\ast\mathbf{h}_2;\tau).
    \end{align*}
\end{proposition}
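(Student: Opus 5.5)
The plan is to reduce the statement to \cref{thm:MSS-character} at finite truncation and then pass to the limit. Fix $M,N\ge1$. The maps $f_{m\tau+n}:\K\N\to\C$, $h\mapsto (m\tau+n)^{-h}$, are $\K$-algebra homomorphisms for the product $\diamond=+$, because
\begin{align*}
f_{m\tau+n}(h_1+h_2)=(m\tau+n)^{-h_1-h_2}=f_{m\tau+n}(h_1)\,f_{m\tau+n}(h_2).
\end{align*}
Indeed, $(\K\N,\diamond)$ is the semigroup algebra of $(\N,+)$ and $f_{m\tau+n}$ is the character $h\mapsto x^{h}$ with $x=(m\tau+n)^{-1}$. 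Also, $\mathcal{X}^{\tau,>0}_{M,N}$ is a finite totally ordered set. Applying \cref{thm:MSS-character}, the truncated series $\mathbb{G}_{M,N}(-;\tau)=F_{\mathcal{X}^{\tau,>0}_{M,N}}$ factors through $\Hd^1$ and is a $\K$-algebra homomorphism $(\Hd^1,\ast)\to\C$. Restricting to the subalgebra $\Hd^2$ gives
\begin{align*}
\mathbb{G}_{M,N}(\mathbf{h}_1\ast\mathbf{h}_2;\tau)=\mathbb{G}_{M,N}(\mathbf{h}_1;\tau)\,\mathbb{G}_{M,N}(\mathbf{h}_2;\tau)
\end{align*}
for all $\mathbf{h}_1,\mathbf{h}_2\in\Hd^2$. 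Alternatively, since the product is just the disjoint union of shapes, \cref{prop:MSS-on-Y} already gives this identity directly on the generators, without passing through the quotient.

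Next I take the iterated limit: first $N\to\infty$, then $M\to\infty$. By Definition/Proposition~\eqref{eq:bfG}, the limits $\lim_M\lim_N\mathbb{G}_{M,N}(\mathbf{h}_i;\tau)$ exist for $\mathbf{h}_i\in\Hd^2$. The product $\mathbf{h}_1\ast\mathbf{h}_2$ is again a tableau with entries $\ge2$, so its limit exists as well. Limits of products are products of limits when each factor converges. Taking the inner limit $N\to\infty$ in the identity above gives equality of the inner limits; then $M\to\infty$ gives $\G(\mathbf{h}_1\ast\mathbf{h}_2;\tau)=\G(\mathbf{h}_1;\tau)\G(\mathbf{h}_2;\tau)$. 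Linearity of the limit extends this from tableaux to all of $\Hd^2$.

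The one step that needs care is well-definedness on the quotient. I must show that the limit is compatible with $\ker L_\diamond$, so that $\G(-;\tau)$ is defined on $\Hd^2$ rather than only on $\mathcal{H}^2$. To handle this, I apply the limit to a relation $\sum c_k\mathbf{h}_k\in\ker L_\diamond$ (a finite sum of tableaux in $\mathcal{H}^2$). Its truncated value $\sum c_k\,\mathbb{G}_{M,N}(\mathbf{h}_k;\tau)$ vanishes for every $M,N$ by \cref{thm:MSS-character}, and each summand has an iterated limit, so the limit of the relation also vanishes. Unit preservation is immediate from $\mathbb{G}_{M,N}(\mathbf{1})=1$. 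Holomorphy is not part of the claim, so it is not needed here.
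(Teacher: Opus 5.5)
Your proposal is correct and follows the same route as the paper. The truncated series $\mathbb{G}_{M,N}(-;\tau)$ is a homomorphism on $\Hd^1$ by \cref{thm:MSS-character}, and the product identity survives the iterated limit $\lim_{M}\lim_{N}$. Your explicit check that the limit vanishes on $\ker L_\diamond$ spells out a step the paper leaves implicit. The paper's appeal to uniform convergence matters only for holomorphy of the limit. Holomorphy is asserted just before the proposition, so membership in $\OH$ may be taken as given, though it is part of the statement rather than irrelevant to it.
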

\begin{proof}
From the definition, $\G_{M,N}(-;\tau):\Hd^1\to\OH$ is a homomorphism. For any $\mathbf{h}\in\Hd^2\subset\Hd^1$, the limit exists and converges uniformly. Therefore, $\G(-;\tau)$ is a homomorphism.
\end{proof}
\begin{proposition}
    For any $\mathbf{h}\in\Hd^2$, we have
    \begin{align*}
        \G(\mathbf{h};\tau+1)=\G(\mathbf{h};\tau).
    \end{align*}
\end{proposition}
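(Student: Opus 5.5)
We prove the statement $\G(\mathbf{h};\tau+1)=\G(\mathbf{h};\tau)$ at the truncated level, by a reindexing of the lattice, and only afterwards pass to the iterated limit. The key observation is that $\tau\mapsto\tau+1$ sends $m\tau+n$ to $m\tau+(n+m)$. So it induces a map $\phi_\tau$ from the lattice points attached to $\tau+1$ to those attached to $\tau$. This map preserves the order $\prec$: for a fixed $m$ it is a translation of $n$ by $m$, which is increasing in $n$, and the first coordinate $m$ is unchanged. It also preserves the positive cone, because $0\prec m\tau+n$ means $m>0$, or $m=0$ and $n>0$, and both conditions are unchanged by the shift. Hence $\phi_\tau$ maps $\SSYT(\Sh(\mathbf{h}),\mathcal X^{\tau+1,>0}_{M,N})$ bijectively onto the set of semi-standard fillings by the shifted set
\[
\{m\tau+n'\mid |m|\le M,\ n'-m\in\Z_N\}\cap\{0\prec\cdot\}.
\]
The summands agree under this bijection, since $f_{m(\tau+1)+n}(h)=f_{m\tau+(n+m)}(h)$.

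It follows that $\G_{M,N}(\mathbf{h};\tau+1)$ equals the Schur sum over the window $n'\in\{m-N,\dots,m+N\}$ in each row $m$, whereas $\G_{M,N}(\mathbf{h};\tau)$ sums over the window $n\in\{-N,\dots,N\}$. The two truncations therefore differ only in the ranges of the $n$-coordinates. To compare them, we use \cref{prop.MSSconv} to split the ordered set into consecutive segments by the value of $m$: first $m=0$ with $n>0$, then $m=1,\dots,M$. This gives
\[
\G_{M,N}(-;\tau)=\zeta_N\star\Psi'_N(-;\tau)\star\cdots\star\Psi'_N(-;M\tau),
\]
where $\zeta_N$ is the truncated Schur multiple zeta value and $\Psi'_N(-;m\tau)$ is the truncated Schur multitangent sum over the segment with first coordinate $m$. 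In the shifted version, segment $m$ becomes a truncated multitangent sum at $x=m\tau$ over the window $[m-N,m+N]$, while the $m=0$ segment is unchanged.

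The limit $N\to\infty$ is taken first, with $M$ fixed. In that limit each windowed sum over row $m$ tends to the full Schur multitangent $\Psi(-;m\tau)$ of \S\ref{sec:examSMonoF}, independently of the finite offset $m$, provided the series converges absolutely. This holds because $\mathbf{h}\in\Hd^2$ guarantees $h_{i,j}\ge2$ on $C\cup\widetilde C$. Since the convolutions involve only finitely many cut terms, the limits of both sides agree term by term for each fixed $M$. Then letting $M\to\infty$ gives the claim.

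The main obstacle is the row $m=0$ and the cut pieces of $\mathbf{h}$ assigned to it, where the sum is only conditionally well-behaved. Here I would use that the $m=0$ segment is literally identical on both sides, and that the pieces of $\mathbf{h}$ in $\Hd^2$ have all entries at least $2$. I would then invoke the absolute convergence of Schur multiple zeta values on $\Hd^0$ (as in \cite{NPY}) to justify exchanging the $N$-limit with the finite convolution.
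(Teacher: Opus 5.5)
Your proof is correct, but it takes a different route from the paper. The paper argues in two lines. Since each $f_{m\tau+n}$ is a $\diamond$-homomorphism for $\diamond=+$, the truncated series factors through $L_\diamond$ (\cref{thm:MSS-character}). Hence $\G(\mathbf h;\tau)$ is a $\Z$-linear combination of ordinary multiple Eisenstein series $\G(k_1,\dots,k_r;\tau)$ with all $k_i\ge 2$, and $1$-periodicity is imported from the known periodicity of multiple Eisenstein series in \cite{B}.

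You instead prove periodicity directly:
\begin{enumerate}[(i)]
\item The rewriting $m(\tau+1)+n=m\tau+(n+m)$ identifies the two ordered positive cones, preserving the lexicographic order and positivity.
\item Splitting by rows $m$ reduces the comparison at fixed $M$ to a finite sum over cut chains. Each term is a product of a truncated Schur multiple zeta value, identical on both sides, and windowed truncated Schur multitangent sums. This splitting holds in $\Y$ for any family, so you never need the linearization.
\item Every cut piece $\mathbf g$ of $\mathbf h$ has all entries $\ge 2$, so each windowed sum converges as $N\to\infty$ to the same absolutely convergent $\Psi(\mathbf g;m\tau)$, whatever the offset $m$.
\end{enumerate}
Your route is self-contained and avoids both the quasi-shuffle linearization and the external input from \cite{B}. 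It also gives the slightly stronger fact that $\lim_{N\to\infty}\G_{M,N}(\mathbf h;\tau)$ is already $1$-periodic for each fixed $M$. In effect it is the mechanism behind \eqref{eq:Gzetag}. The paper's route is shorter and shows the intended use of $L_\diamond$: properties of column values transfer to all tableaux.

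The ``main obstacle'' in your last paragraph is not actually one. For fixed $M$, every factor, including the $m=0$ one, is absolutely convergent. It is bounded by $\prod\zeta(h_{i,j})$, respectively by $\prod_{(i,j)}\sum_{n\in\Z}|x+n|^{-h_{i,j}}$. The only conditionally convergent limit, $M\to\infty$, is applied to two identical sequences, so no interchange of limits is needed there.
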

\begin{proof}
    Through the linearization of Young tableaux, the Schur multiple Eisenstein series can be expressed as a linear combination of multiple Eisenstein series. The result follows from the corresponding identity for multiple Eisenstein series (see \cite{B})
\(
\G(h_1,\dots,h_r;\tau+1)=\G(h_1,\dots,h_r;\tau).
\)
\end{proof}
This implies that Schur multiple Eisenstein series admit Fourier expansions. 
In fact, we can write the Fourier expansion of Schur multiple Eisenstein series in the following way:
\begin{equation}\label{eq:Gzetag}
    \G(-;\tau)=m\circ\left(\zeta(-)\otimes\check{g}(-;\tau)\right)\circ\Delta_{cut}.  
\end{equation}
A detailed study of these Fourier expansions will be presented in a coming paper \cite{Yu}. 

As an application of the Jacobi--Trudi formula (\cref{Thm:JTf-homo&LdJ}), since $\G(-;\tau)$ is an algebra homomorphism, we obtain a determinant expression for the Schur multiple Eisenstein series in terms of multiple Eisenstein series.

\begin{corollary}[Jacobi--Trudi formula]
     Assume that $\mathbf{h}=(h_{i,j})_{(i,j)\in D(\Sh(\mathbf{h}))}\in\Hd^{2,\operatorname{diag}}$ is a diagonal constant Young tableau. Let $\Sh(\mathbf{h})=\lambda/\mu$, where $\lambda=(\lambda_1,\dots,\lambda_r)$ and $\mu=(\mu_1,\dots,\mu_r)$. Let $\lambda'=(\lambda'_1,\dots,\lambda_s')$ and $\mu'=(\mu_1',\dots,\mu_s')$ be the conjugates of $\lambda$ and $\mu$. We allow zero parts in $\lambda'$ and $\mu'$. Then we have 
    \begin{align}
        \mathbb{G}(\mathbf{h};\tau)=\det\left[\mathbb{G}(a_{-\mu'_j+j-1},\dots, a_{-\mu'_j+j-(\lambda'_i-\mu'_j-i+j)};\tau)\right]_{1\leq i,j\leq s}.
        \label{eq:GJTf}
    \end{align}
    Here, $\mathbb{G}(a_{-\mu'_j+j-1},\dots, a_{-\mu'_j+j-(\lambda'_i-\mu'_j-i+j)};\tau):=1$ if $\lambda'_i-\mu'_j-i+j=0$ and $0$ if $\lambda'_i-\mu'_j-i+j<0$. 
    \label{cor:Jacobi-Trudi}
\end{corollary}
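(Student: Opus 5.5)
The plan is to apply the algebra homomorphism $\G(-;\tau)\colon\Hd^2\to\OH$ to the Jacobi--Trudi identity in $\Yd$, namely \cref{Thm:JTf-homo&LdJ}. That theorem gives $J(\mathbf{h})-\mathbf{h}\in\ker L_\diamond$, so $J(\mathbf{h})=\mathbf{h}$ in the quotient $\Hd^2$. Since $\G(-;\tau)$ is defined on this quotient, and is a $\K$-algebra homomorphism by the proposition just proved, we get
\[
\G(\mathbf{h};\tau)=\G(J(\mathbf{h});\tau)=\det\bigl[\G(T_{i,j}(\mathbf{h});\tau)\bigr]_{1\le i,j\le s}.
\]
Here $T_{i,j}(\mathbf{h})$ is the column tableau with entries $a_{-\mu'_j+j-1},\dots,a_{-\mu'_j+j-(\lambda'_i-\mu'_j-i+j)}$ from top to bottom. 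The determinant can be pulled out because $J(\mathbf{h})$ is a signed sum of $\ast$-products of column tableaux, and $\G$ is multiplicative and linear. The conventions match: $\G(\mathbf{1};\tau)=1$ handles the empty columns, and zero matrix entries stay zero.

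The first point to check is that every column tableau in the expansion lies in $\Hd^2$, so that $\G$ applies to it. All entries $a_n$ that actually occur in a nonvanishing permutation product lie on diagonals meeting $\lambda/\mu$ (by the remark after \cref{def:JTF}), so they are $\geq 2$. Entries on diagonals not meeting the shape only appear in products that vanish anyway.

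The second point is to identify $\G$ of a single-column tableau with the ordinary multiple Eisenstein series. A column tableau has a unique semi-standard decomposition, so $L_\diamond$ sends it to the word $z_{a_{\alpha}}\cdots z_{a_{\beta}}$. It remains to check that the Schur-type sum over a column, with the strict order $\prec$ on $\mathcal{X}^{\tau,>0}_{M,N}$, is exactly the truncated multiple Eisenstein series $\G_{M,N}(a_{\alpha},\dots,a_{\beta};\tau)$ of \cite{B}. I would then pass to the limit in the same order $\lim_M\lim_N$ on both sides. This is the step I expect to need the most care: the truncated maps $\G_{M,N}$ are homomorphisms on all of $\Hd^1$, so the finite identity holds exactly at each $(M,N)$, and taking iterated limits of a finite polynomial expression is legitimate because each factor converges for entries $\geq2$.
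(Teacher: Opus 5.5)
Your proposal is correct and is essentially the paper's argument. The paper obtains the corollary by applying the algebra homomorphism $\G(-;\tau)\colon\Hd^2\to\OH$ to the identity $J(\mathbf{h})\equiv\mathbf{h}\pmod{\ker L_\diamond}$ from the Jacobi--Trudi theorem in $\Yd$, exactly as in your first paragraph; your truncation-and-limit step only re-derives the paper's earlier proposition that $\G(-;\tau)$ is a homomorphism on $\Hd^2$ (which is itself proved from the truncated $\G_{M,N}$), so it is harmless but unnecessary once that proposition is cited.
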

We now consider the special case where all entries in the Young tableau are identical. Suppose every entry equals $h \ge 2$. \cref{cor:Jacobi-Trudi} expresses this Schur multiple Eisenstein series as a polynomial in standard multiple Eisenstein series of the form $\G(h, \dots, h; \tau)$. To evaluate these specific terms, we use the following identity.
\begin{corollary}For $h\geq2$, we have
    \begin{align*}
        \exp\left(\sum_{i\geq1}\frac{(-1)^{i-1}}{i}\G(ih;\tau)X^i\right)=\sum_{n=0}^{\infty}\G(\underbrace{h,\dots,h}_n;\tau)X^n.
    \end{align*}
    \label{expresult}
\end{corollary}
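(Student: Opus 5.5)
The identity will be obtained by pushing the exponential identity of \cref{cor:expYTresult} through the algebra homomorphism $\G(-;\tau)$. We work over $\K=\Q$, with $\A=\N_{\geq2}$ and $\diamond=+$, so that $h^{\diamond i}=ih$. With $a=h$, identity \eqref{eq:newtonYT-exp} holds in $\Hd^2[[X]]$:
\begin{align*}
    \exp_{\ast}\!\left(\sum_{i\geq1}\frac{(-1)^{i-1}}{i}\,\{ih\}^{(1)}\,X^i\right)=1+\sum_{n\geq1}\{h\}^{(1^n)}X^n .
\end{align*}
Comparing coefficients of $X^n$, this is equivalent to the finite Newton recursion \eqref{eq:newtonYT} in $\Hd^2$:
\begin{align*}
    n\,\{h\}^{(1^n)}=\sum_{i=1}^n(-1)^{i-1}\{ih\}^{(1)}\ast\{h\}^{(1^{n-i})}.
\end{align*}
All tableaux appearing here have entries at least $2$, so they lie in $\Hd^2$.

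Next we apply $\G(-;\tau)$, which is a $\Q$-algebra homomorphism $\Hd^2\to\OH$ by the preceding proposition. A single column tableau has exactly one semi-standard decomposition, so $\{h\}^{(1^n)}$ linearizes to the word $z_h^n$. Hence $\G(\{h\}^{(1^n)};\tau)=\G(h,\dots,h;\tau)$ with $n$ entries, the ordinary multiple Eisenstein series. Likewise $\G(\{ih\}^{(1)};\tau)=\G(ih;\tau)$. One should check that the Schur definition through the limits \eqref{eq:bfG} agrees with the multiple Eisenstein series of \cite{B} for a single column. This holds because a column's semi-standard condition on $\mathcal X^{\tau,>0}_{M,N}$ is exactly the strict order $0\prec\lambda_1\prec\cdots\prec\lambda_n$, with the same order of limits.

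Applying $\G$ to the recursion gives, in $\OH$,
\begin{align*}
    n\,\G(\underbrace{h,\dots,h}_n;\tau)=\sum_{i=1}^n(-1)^{i-1}\G(ih;\tau)\,\G(\underbrace{h,\dots,h}_{n-i};\tau).
\end{align*}
Set $E(X)=\sum_{n\ge0}\G(h,\dots,h;\tau)X^n$ and $P(X)=\sum_{i\ge1}(-1)^{i-1}\G(ih;\tau)X^i$ in $\OH[[X]]$. The recursion then says $XE'(X)=P(X)E(X)$ with $E(0)=1$. This differential equation has the unique solution $E=\exp\bigl(\int_0^X P(t)\,t^{-1}\,dt\bigr)$, which is exactly the claimed exponential.

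Alternatively, one may apply $\G$ coefficientwise to the exponential identity directly, since a ring homomorphism commutes with $\exp$ of formal series without constant term over $\Q$. The main point to be careful about is not the algebra but the identification of $\G$ on column tableaux with the classical multiple Eisenstein series, including the case $h=2$ with the iterated limits. This is immediate from the definitions, because the single-column case of \eqref{eq:defGn} is literally the truncated multiple Eisenstein series.
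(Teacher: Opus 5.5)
Your proposal is correct and follows the paper's proof: apply the $\Q$-algebra homomorphism $\G(-;\tau)$ to the exponential identity \eqref{eq:newtonYT-exp} with $a=h$. Your detour through the Newton recursion and the equation $XE'=PE$ is equivalent to the fact that a $\Q$-algebra homomorphism commutes with $\exp$. Your check that $\G$ on a single-column tableau is the classical multiple Eisenstein series is a detail the paper leaves implicit.
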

\begin{proof}
Apply the algebra homomorphism $\G(-;\tau)$ to \eqref{eq:newtonYT-exp} with $a=h$. Since the target algebra is $\C$, the $\ast$-exponential is sent to the usual exponential.
\end{proof}
\cref{expresult} is an application of \cref{cor:expYTresult}. It rewrites the series $\G(h, \dots, h; \tau)$ as a polynomial in depth-$1$ Eisenstein series $\G(ih; \tau)$. 

Together, these two corollaries provide a complete reduction. First, \cref{cor:Jacobi-Trudi} reduces a constant-entry Schur multiple Eisenstein series to a polynomial in $\G(h, \dots, h; \tau)$. Then, \cref{expresult} reduces those terms further to polynomials in $\G(ih; \tau)$. Since standard depth-$1$ Eisenstein series have well-known Fourier expansions, this reduction explicitly determines the Fourier expansion of the original Schur multiple Eisenstein series. To see how this reduction works, we consider the following example:
\begin{align*}
    &\G(2;\tau)=\G\left(\ \ytableausetup{centertableaux, boxsize=1em}{ \begin{ytableau}
2 
\end{ytableau}}\ ;\tau\right),\,\G(4;\tau)=\G\left(\ \ytableausetup{centertableaux, boxsize=1em}{\begin{ytableau}
2 & 2
\end{ytableau}}\ ;\tau\right)-\G\left(\ \ytableausetup{centertableaux, boxsize=1em}{ \begin{ytableau}
2  \\
2 
\end{ytableau}}\ ;\tau\right),\\ 
&\G(6;\tau)=\G\left(\ \ytableausetup{centertableaux, boxsize=1em}{ \begin{ytableau}
2&2 & 2
\end{ytableau}}\ ;\tau\right)-\G\left(\ \ytableausetup{centertableaux, boxsize=1em}{\begin{ytableau}
2 & 2 \\
2 & \none
\end{ytableau}};\tau\right)+\G\left(\ \ytableausetup{centertableaux, boxsize=1em}{\begin{ytableau}
2  \\
2 \\
2
\end{ytableau}}\ ;\tau\right).
\end{align*}
These formulas show that the relation is invertible. Thus $\G(2;\tau)$, $\G(4;\tau)$, and $\G(6;\tau)$ can be written as $\Z$-linear combinations of Schur multiple Eisenstein series with all entries equal to $2$. Since they generate the ring of quasimodular forms, this means that all quasimodular forms can be expressed using these Schur multiple Eisenstein series. This idea extends to the following modularity results:
\begin{corollary}[Modularity]
    Let $\mathbf{h}=\{h\}^{\lambda/\mu}\in\Hd^2$. Then
        \begin{enumerate}[(i)]
            \item $\G(\mathbf{h};\tau)\in\Q[\G(hl;\tau)|l\geq1]$.
            \item If $h=2$, $\G(\mathbf{h};\tau)$ is a quasimodular form, and \emph{every} quasimodular form can be written as a linear combination of these $\G(\mathbf{h};\tau)$.
            \item For even $h\geq4$, $\G(\mathbf{h};\tau)$ is a modular form.
        \end{enumerate}
\end{corollary}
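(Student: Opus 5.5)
Part (i) follows by pushing the algebraic reduction of \cref{Thm:YTpoly} through the homomorphism $\G(-;\tau)$. Take $\mathbf{h}=\{h\}^{\lambda/\mu}$ with $h\ge2$ and $n=|\lambda|-|\mu|$. Over $\K=\Q$, \cref{Thm:YTpoly} with $a=h$ puts $\{h\}^{\lambda/\mu}$ in $\Q[\{h^{\diamond l}\}^{(1)}\mid l\ge1]$ inside $\Hd^1$. Since $\diamond=+$, we have $h^{\diamond l}=hl\ge2$, so every depth-one tableau involved lies in $\Hd^2$. I would check that the identity actually holds in $\Hd^2$, not only in $\Hd^1$. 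The Jacobi--Trudi determinant and the Newton recursion \eqref{eq:newtonYT} use only column tableaux with entries $h$ and depth-one tableaux with entries $hl$, and all of these lie in $\mathcal{H}^2$. Moreover, equality in the quotient is detected by $L_\diamond$, and $L_\diamond$ is compatible with the inclusion $\mathcal{H}^2\subset\mathcal{H}^1$. Applying the algebra homomorphism $\G(-;\tau):\Hd^2\to\OH$ then gives $\G(\mathbf h;\tau)\in\Q[\G(hl;\tau)\mid l\ge1]$, using $\G(\{hl\}^{(1)};\tau)=\G(hl;\tau)$. In fact only $l\le n$ occurs, so the result is a weighted-homogeneous polynomial of weight $hn$ in the $\G(hl;\tau)$.

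For (iii), take $h\ge4$ even. Then every $hl\ge4$ is even, so each $\G(hl;\tau)$ is a classical holomorphic Eisenstein series of weight $hl$, and hence a modular form for $\mathrm{SL}_2(\Z)$. The ring of modular forms is closed under products and sums. Because the polynomial from (i) is homogeneous of weight $hn$, $\G(\mathbf h;\tau)$ is a modular form of weight $hn$.

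For (ii), take $h=2$. Here the generators $\G(2l;\tau)$ are the Eisenstein series $\G(2),\G(4),\G(6),\dots$. All of them are quasimodular, and $\G(2l)$ is modular for $l\ge2$. So by (i), $\G(\mathbf h;\tau)$ is quasimodular of weight $2n$. For the spanning statement, I would use the explicit inversions displayed before the corollary. These write $\G(2),\G(4),\G(6)$ as $\Z$-linear combinations of the values $\G(\{2\}^\nu;\tau)$. They come from the hook formula (\cref{Thm:Main-L-diamond-Hook}) with $h=2$, $l=1,2,3$, pushed through $\G$.

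Since $\Q[\G(2),\G(4),\G(6)]$ is the full ring of quasimodular forms, it remains to show that monomials in these series are also combinations of the $\G(\{2\}^{\lambda/\mu};\tau)$. I would handle this with multiplicativity. A product of constant-entry tableaux $\{2\}^{\nu_1}\ast\cdots\ast\{2\}^{\nu_k}$ is itself a constant-entry tableau of a disconnected skew shape. Alternatively, \cref{cor:skewtononskew} applied to $\iota_2$ rewrites such a product, via Littlewood--Richardson, as an integer combination of $\{2\}^{\nu}$. Either way, every monomial, and hence every quasimodular form, lies in the span.

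The main obstacle I expect is the analytic step: using the homomorphism property of $\G$ on $\Hd^2$ (stated just above, relying on the iterated limit) rather than re-deriving convergence. I would also keep track of the subalgebra $\Hd^2$ versus $\Hd^1$ in the first paragraph. The rest is formal.
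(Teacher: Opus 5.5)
Your proposal is correct and follows essentially the same route as the paper: (i) comes from applying the homomorphism $\G(-;\tau)$ to \cref{Thm:YTpoly} with $a=h$, (iii) from weight-homogeneity in the modular generators $\G(hl;\tau)$, and (ii) from the displayed hook-formula inversions for $\G(2;\tau),\G(4;\tau),\G(6;\tau)$ together with closure of the constant-entry span under products. The differences are cosmetic: you make the $\Hd^2\subset\Hd^1$ bookkeeping explicit, and for product closure you observe directly that a $\ast$-product of constant-entry tableaux is again a constant-entry skew tableau, whereas the paper cites \cref{Thm:iota}, which amounts to your Littlewood--Richardson alternative.
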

\begin{proof}
Part (i) follows by applying the algebra homomorphism $\G(-;\tau)$ to \cref{Thm:YTpoly} with $a=h$. Assume $h=2$. Then part (i) shows that $\G(\mathbf h;\tau)$ is a polynomial in the Eisenstein series $\G(2l;\tau)$ $(l\geq1)$. Hence $\G(\mathbf h;\tau)$ is quasimodular. Conversely, the formulas above show that $\G(2;\tau)$, $\G(4;\tau)$, and $\G(6;\tau)$ are in the $\Q$-span of Schur multiple Eisenstein series with all entries equal to $2$. This span is closed under products, because $\G(-;\tau)$ is an algebra homomorphism and the constant-entry tableaux are closed under products by \cref{Thm:iota}. Therefore this span contains $\Q[\G(2;\tau),\G(4;\tau),\G(6;\tau)]$. This is the ring of quasimodular forms for $\mathrm{SL}_2(\Z)$. This proves (ii).

Assume that $h\geq4$ is even. By part (i), $\G(\mathbf h;\tau)$ is a polynomial in the Eisenstein series $\G(hl;\tau)$ $(l\geq1)$. Each $\G(hl;\tau)$ is a modular form. Moreover, the polynomial is homogeneous of weight $h|\lambda/\mu|$. Hence $\G(\mathbf h;\tau)$ is a modular form. This proves (iii).
\end{proof}

\subsection{Schur multiple divisor-sums and \texorpdfstring{$q$}{q}-analogue of Schur multiple zeta values}\label{sec:examSMDS&qanalog}
Divisor sums $\sigma_r(n)=\sum_{d\mid n}d^r$ are important objects in number theory. They also appear in the Fourier expansion of Eisenstein series. We consider a Schur-type sum of the divisors: for partitions $\mu\subsetneq\lambda$, and $\mathbf{h}\in\YT(\lambda/\mu;\Z_{\geq1})$, we define
\begin{align*}
    \sigma_{\mathbf{h}}(n)=\sum_{\substack{\sum_{(i,j)\in D(\lambda/\mu)}u_{i,j}v_{i,j}=n,\\(u_{i,j})\in\SSYT(\lambda/\mu;\N),\\ (v_{i,j})\in\N^{D(\lambda/\mu)}}}\prod_{(i,j)\in D(\lambda/\mu)}v_{i,j}^{h_{i,j}-1}.
\end{align*}
The motivation to study them comes from the Fourier expansion of the Schur multiple Eisenstein series. In particular, we consider their generating functions, which we denote by
\begin{equation}\label{eq:gBbbk}
    g(\mathbf{h}):=\prod_{(i,j)\in D(\Sh(\mathbf{h}))}\frac{1}{(h_{i,j}-1)!}\sum_{n>0}\sigma_{\mathbf{h}}(n)q^n\in\Q[[q]]. 
\end{equation}
For $\lambda=\mu$ we set $g(\mathbf 1):=1$, in accordance with \cref{def:MSS}. These $q$-series also appear in the Fourier expansion of the Schur multiple Eisenstein series in a different way than \eqref{eq:Gzetag} in \cite{Yu}. Throughout this subsection, $\diamond$ does not denote $+$, but the product given in \eqref{eq:BKdia} below. The argument in \cite{BK} gives the following proposition.
\begin{proposition}
    For any partitions $\mu\subsetneq\lambda$ and any index $\mathbf{h}\in\mathcal{H}^0$, we have
    \begin{align*}
g(\mathbf{h})=\sum_{(u_{i,j})\in\SSYT(\lambda/\mu;\N)}\prod_{(i,j)\in D(\lambda/\mu)}\frac{P_{h_{i,j}}(q^{u_{i,j}})}{(h_{i,j}-1)!\,\left(1-q^{u_{i,j}}\right)^{h_{i,j}}},
    \end{align*}
    where $P_s(X)$ is the $s$-th Eulerian polynomial, given by
     \begin{align*}
        \frac{P_s(X)}{(1-X)^s}=\sum_{n\geq1}n^{s-1}X^n.
    \end{align*}
\end{proposition}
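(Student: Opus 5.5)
The identity is proved box by box. Start from the definition of $\sigma_{\mathbf{h}}(n)$ and write the generating function as a single sum over all data. Interchanging the order of summation (everything is a formal power series in $q$ with nonnegative coefficients, so rearrangement is harmless), we get
\begin{align*}
\sum_{n>0}\sigma_{\mathbf{h}}(n)q^n=\sum_{(u_{i,j})\in\SSYT(\lambda/\mu;\N)}\ \sum_{(v_{i,j})\in\N^{D(\lambda/\mu)}}\ \prod_{(i,j)\in D(\lambda/\mu)}v_{i,j}^{h_{i,j}-1}q^{u_{i,j}v_{i,j}}.
\end{align*}
Since $\lambda\neq\mu$, every term has positive exponent. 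So the restriction $n>0$ is automatic and no constant term is lost.

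Next fix a semi-standard filling $(u_{i,j})$. The inner sum over $(v_{i,j})$ is unconstrained: the semi-standard condition involves only the $u_{i,j}$. Hence it factors as a product over boxes of
\begin{align*}
\sum_{v\geq1}v^{h_{i,j}-1}\bigl(q^{u_{i,j}}\bigr)^{v}.
\end{align*}
Apply the defining identity of the Eulerian polynomial with $X=q^{u_{i,j}}$ and $s=h_{i,j}$. This factor becomes $P_{h_{i,j}}(q^{u_{i,j}})/(1-q^{u_{i,j}})^{h_{i,j}}$. Multiplying by the prefactor $\prod 1/(h_{i,j}-1)!$ and pulling it inside the product gives the stated formula.

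The only point needing care is that these manipulations are legitimate in $\Q[[q]]$. For each $n$, only finitely many data $(u,v)$ satisfy $\sum u_{i,j}v_{i,j}=n$, because all $u_{i,j},v_{i,j}\geq1$. So each coefficient is a finite sum, and the families are summable in the $q$-adic topology. Indeed, the term for a filling $u$ has $q$-order at least $\sum u_{i,j}\to\infty$. This justifies splitting the sum and applying the expansion $P_s(X)/(1-X)^s=\sum n^{s-1}X^n$ termwise.

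The hypothesis $\mathbf{h}\in\mathcal{H}^0$ is only needed to have $h_{i,j}\geq1$, so that $v^{h_{i,j}-1}$ and $(h_{i,j}-1)!$ make sense. This step is not a real obstacle.

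We can also read the identity as a multiple Schur series in the sense of \cref{def:MSS}, with $f_u(h)=P_h(q^u)/((h-1)!(1-q^u)^h)$, truncated to $u\leq M$ and then letting $M\to\infty$. That reading is not needed for the proof itself.
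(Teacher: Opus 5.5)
Your proof is correct and takes essentially the approach the paper relies on. The paper gives no separate proof; it defers to the argument of \cite{BK}, which is exactly this: interchange the summations, factor the unconstrained $v$-sums box by box, and apply $\sum_{v\ge1}v^{s-1}X^v=P_s(X)/(1-X)^s$ at $X=q^{u_{i,j}}$, now with semi-standard fillings in place of ordered indices. Your justification in $\Q[[q]]$ (only finitely many $(u,v)$ contribute to each coefficient, since all $u_{i,j},v_{i,j}\ge1$) and your remark that only $h_{i,j}\ge1$ is actually needed are both sound.
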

Notice that this is a well-defined $q$-series for any index since $P_h(X)\in X\Q[X]$. These series can be seen as modified $q$-analogues of Schur multiple zeta values:
\begin{proposition}
    For any index $\mathbf{h}\in\mathcal{H}^0$, we have
    \begin{align*}
        \lim_{q\to1^-}(1-q)^{\sum_{(i,j)\in D(\Sh(\mathbf{h}))}h_{i,j}}g(\mathbf{h})=\zeta(\mathbf{h}).
    \end{align*}
\end{proposition}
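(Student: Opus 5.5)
The plan is to pass to the limit term by term in the expansion of $g(\mathbf{h})$ given by the preceding proposition, and to justify the interchange of limit and summation by dominated convergence. The dominating series will be the absolutely convergent Schur multiple zeta value. Write $\lambda/\mu=\Sh(\mathbf{h})$. Since the exponent $\sum_{(i,j)}h_{i,j}$ splits over the boxes, the preceding proposition gives
\begin{align*}
(1-q)^{\sum_{(i,j)}h_{i,j}}\,g(\mathbf{h})=\sum_{(u_{i,j})\in\SSYT(\lambda/\mu;\N)}\ \prod_{(i,j)\in D(\lambda/\mu)}\phi_{h_{i,j}}(u_{i,j};q),
\qquad
\phi_h(u;q):=\frac{(1-q)^h\,P_h(q^u)}{(h-1)!\,(1-q^u)^h}.
\end{align*}
So it suffices to prove two things for each fixed $h\ge1$: the pointwise limit $\phi_h(u;q)\to u^{-h}$ as $q\to1^-$, and a bound $0\le\phi_h(u;q)\le C_h\,u^{-h}$ that holds uniformly in $q\in(0,1)$ and $u\ge1$.

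\textbf{Pointwise limit.} We have $(1-q^u)/(1-q)=1+q+\dots+q^{u-1}\to u$. Moreover $P_h(1)=(h-1)!$, which is the classical fact that the Eulerian numbers of order $h$ sum to $(h-1)!$; equivalently, $(1-X)^h\sum_{n\ge1}n^{h-1}X^n$ tends to $(h-1)!$ as $X\to1^-$. Together these give $\phi_h(u;q)\to u^{-h}$.

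\textbf{Uniform bound.} I expect this step to be the main obstacle, because the naive bound $1-q^u\ge u\,q^{u-1}(1-q)$ degenerates for large $u$. I would argue as follows.
\begin{enumerate}[(i)]
\item The polynomial $P_h$ has nonnegative coefficients and is divisible by $X$, so $0\le P_h(x)\le P_h(1)\,x=(h-1)!\,x$ for $x\in[0,1]$. Hence $\phi_h(u;q)\le(1-q)^h q^u/(1-q^u)^h$.
\item Put $t:=-u\log q\ge u(1-q)$. From $1-e^{-t}\ge\tfrac12\min(t,1)$ we get $1-q^u\ge\tfrac12\min\bigl(u(1-q),1\bigr)$.
\item If $u(1-q)\le1$, then $(1-q)^h/(1-q^u)^h\le 2^h u^{-h}$, and $q^u\le 1$.
\item If $u(1-q)>1$, set $s:=u(1-q)$. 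Then $q^u\le e^{-s}$, so
\begin{align*}
\phi_h(u;q)\le 2^h(1-q)^h e^{-s}=2^h u^{-h}\,s^h e^{-s}\le 2^h\Bigl(\sup_{s>0}s^he^{-s}\Bigr)u^{-h}.
\end{align*}
\end{enumerate}
In both cases $\phi_h(u;q)\le C_h u^{-h}$ with $C_h$ independent of $q$ and $u$.

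\textbf{Conclusion.} Each summand above is then bounded by $\bigl(\prod_{(i,j)}C_{h_{i,j}}\bigr)\prod_{(i,j)}u_{i,j}^{-h_{i,j}}$. For $\mathbf{h}\in\mathcal{H}^0$ the sum of these bounds over $\SSYT(\lambda/\mu;\N)$ is a constant multiple of the Schur multiple zeta value, which converges absolutely by \cite{NPY}. Dominated convergence for series therefore lets us take $q\to1^-$ inside the sum, and the limit is $\sum_{(u_{i,j})}\prod_{(i,j)}u_{i,j}^{-h_{i,j}}=\zeta(\mathbf{h})$.
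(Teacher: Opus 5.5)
Your proof is correct and takes the same route as the paper: interchange $q\to1^-$ with the sum over $\SSYT(\lambda/\mu;\N)$, then evaluate each term using $(1-q^u)/(1-q)\to u$ and $P_h(1)=(h-1)!$. The paper only asserts the interchange, via uniform convergence and a reference to \cite{BK}; your uniform bound $0\le\phi_h(u;q)\le C_h u^{-h}$ supplies the domination by the absolutely convergent Schur multiple zeta value that makes this step rigorous.
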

The proof is the same as in \cite{BK}. The summation and the limit may be interchanged. This follows from the uniform convergence of the inner sum for $|q|<1$. Then for $h>1$, using the Eulerian polynomial identity, we have
\begin{align*}
    \lim_{q\to1^-}(1-q)^h\frac{P_h(q^m)}{(h-1)!(1-q^m)^h}=\frac{P_h(1)}{(h-1)!\,m^h}=\frac{1}{m^h}.
\end{align*}
For the $q$-series $g(\mathbf{h})$, we use the following algebra setup.
Define the product $\diamond$ on $\Q\N$ by
\begin{align}\label{eq:BKdia}
    {h_1}\diamond {h_2}=({h_1+h_2})+\sum_{j=1}^{h_1+h_2-1}\left(\lambda^j_{h_1,h_2}+\lambda^j_{h_2,h_1}\right)\,j,\quad(j\in\Q\N)
\end{align}
where the rational coefficients $\lambda^j_{h_1,h_2}$ are given by
\begin{align*}
    \lambda^j_{h_1,h_2}=(-1)^{h_2-1}\binom{h_1+h_2-1-j}{h_1-j}\frac{B_{h_1+h_2-j}}{(h_1+h_2-j)!}.
\end{align*}
Here, $B_h$ is the Bernoulli number, with the convention $B_1=-\tfrac12$. By \cite{BK}, each of the following maps $f_m$ $(m\geq1)$ is a $\diamond$-algebra homomorphism for the product \eqref{eq:BKdia}, and $g(\mathbf{h})$ is the multiple Schur series associated with the family $\{f_m\}_{m\geq1}$: 
\begin{align*}
    f_m:\K\N&\longrightarrow \Q[[q]]\\
    h&\longmapsto \frac{1}{(h-1)!}\frac{P_h(q^m)}{(1-q^m)^h}.
\end{align*}
Hence, by \cref{thm:MSS-character}, each truncation $F_{\Z^+_M}$ vanishes on $\ker L_\diamond$; since every coefficient of $g(\mathbf{h})$ agrees with that of $F_{\Z^+_M}(\mathbf{h})$ for large $M$, the map $g$ induces a $\Q$-algebra homomorphism $g:\Hd^1\to\Q[[q]]$.
One special case of this $q$-series $g(\mathbf{h})$ is MacMahon's series. Take $\lambda=(\{1\}^r)$, $\mu=\varnothing$, and $h_{i,j}=2$ for any $(i,j)\in D(\lambda)$. Then we have
\begin{align*}
    g\left({
\ytableausetup{boxsize=1em} 
\begin{ytableau}
2 \\
2 \\
\none[\scalebox{0.5}{$\vdots$}]\\
2
\end{ytableau}
}\right)=\sum_{0<m_1<\cdots<m_r}\frac{q^{m_1+\cdots+m_r}}{(1-q^{m_1})^2\cdots(1-q^{m_r})^2}, \quad(r\geq1).
\end{align*}
In the forthcoming work \cite{BY}, we study the case where all entries are equal to $2$. In this case, these $q$-series are interesting objects in their own right. They are quasimodular forms, and they span the ring of all quasimodular forms with $\Q$ coefficients. 
Denoting $g_{\lambda}=g(\{2\}^{\lambda})$, we can write $\Delta(q)$ as follows:
\begin{align*}
\Delta(q) &= q \prod_{n=1}^{\infty} (1-q^n)^{24}\\
&= g_{(1)} - 27 g_{(2)} + 383 g_{(1,1)} - 72744 g_{(1,1,1)} - 2250 g_{(4)} + 22410 g_{(3,1)} - 40536 g_{(2,2)} \\
& - 476298 g_{(2,1,1)} + 1399626 g_{(1^4)} + 98280 g_{(3,2)} - 181440 g_{(3,1,1)} - 2996784 g_{(2,2,1)} \\
& + 6106968 g_{(2,1,1,1)} - 8846712 g_{(1,1,1,1,1)} + 22680 g_{(6)} - 294840 g_{(3,3)} + 272160 g_{(3,1,1,1)} \\
& - 17889984 g_{(2,2,2)} + 17912664 g_{(2,2,1,1)} - 18184824 g_{(2,1,1,1,1)} + 17889984 g_{(1,1,1,1,1,1)}.
\end{align*}

\subsection{Schur multiple \texorpdfstring{$L$}{L}-values}\label{sec:SMLV}
Let $\chi$ be a Dirichlet character modulo $N$ and $\mathbf{h}\in\YT(\lambda/\mu;\Z_{\geq1})$. Assume that $h_{i,j}>1$ at every corner. We define the \emph{Schur multiple Dirichlet $L$-value} by
\begin{align*}
    L_\chi(\mathbf{h})
    =\sum_{(m_{i,j})\in\SSYT(\lambda/\mu,\N)}\prod_{(i,j)\in D(\lambda/\mu)}\frac{\chi(m_{i,j})}{m_{i,j}^{h_{i,j}}}.
\end{align*}
Since $|\chi(m)|\leq1$, this sum converges absolutely under the same condition as the Schur multiple zeta values. A single box gives the Dirichlet $L$-value
\ytableausetup{centertableaux,boxsize=1.1em}
\begin{align*}
    L_\chi\!\left(\,\begin{ytableau} h \end{ytableau}\,\right)=L(h,\chi)=\sum_{m\geq1}\frac{\chi(m)}{m^h},
\end{align*}
so $L_\chi$ is the Schur version of the Dirichlet $L$-values, and it is related to the multiple Dirichlet $L$-values studied in \cite{Yam}.

This is a multiple Schur series. We will compare the two representations of Schur multiple Dirichlet $L$-values as multiple Schur series by selecting two different sets $\A$ and corresponding maps $\{f_m\}_{m\in\X}$. 

Take $\A_1=\N$ and $\calP=\C$, and for $m\in\Z^+_M:=\{1,\dots,M\}$ define
\begin{align*}
    f_m:\K\N&\longrightarrow\C\\
    h&\longmapsto\frac{\chi(m)}{m^h}.
\end{align*}
Then $L_\chi(\mathbf{h})=\lim_{M\to\infty}F_{\Z^+_M}(\mathbf{h})$, the limit of the multiple Schur series of the family $\{f_m\}$.

It is natural to ask when this series comes from the linearization $L_\diamond$. By \cref{thm:MSS-character}, this happens when $f_m$ is a $\diamond$-algebra homomorphism. With $\diamond=+$,  
\begin{align*}
    f_m(h_1+h_2)=f_m(h_1)f_m(h_2)\Leftrightarrow\frac{\chi(m)}{m^{h_1+h_2}}=\frac{\chi(m)^2}{m^{h_1+h_2}},
\end{align*}
that is, $\chi(m)^2=\chi(m)$ for all $m$. This holds when $\chi$ is the principal character. Thus, for a principal character $\chi$ and $\diamond=+$, each $f_m$ is a homomorphism. The map $L_\chi$ factors through $\Yd$ and comes from $L_\diamond$, as in the case of the Schur multiple zeta values.

For a non-principal character $\chi$, the situation is different. Since $\chi(m)^2\neq\chi(m)$ for some $m$, the family $\{f_m\}_{m\in\Z^+_M}$ is not a family of algebra homomorphisms for $\diamond=+$.

In fact, no choice of $\diamond$ makes all maps $\{f_m\}_{m\in\Z^+_M}$ algebra homomorphisms in this algebraic setup. For arbitrary $\diamond$, write $h_1\diamond h_2=\sum_{h}c_h\,h$ with finitely many nonzero $c_h\in\Q$. Then $f_m(h_1\diamond h_2)=\chi(m)\sum_h c_h m^{-h}$, while $f_m(h_1)f_m(h_2)=\chi(m)^2 m^{-(h_1+h_2)}$. Fix a residue class $b$ modulo $N$ with $\chi(b)\neq0$. If the equality $f_m(h_1\diamond h_2)=f_m(h_1)f_m(h_2)$ held for all $m\equiv b\pmod N$, then the two polynomials in $m^{-1}$ would agree at infinitely many values of $m$, hence coefficientwise, forcing $c_{h_1+h_2}=\chi(b)$ and $c_h=0$ otherwise. Since the $c_h$ do not depend on $b$, the character $\chi$ would be constant on all residue classes with $\chi(b)\neq0$, so $\chi$ would be principal. The product $\diamond$ acts only on the entries. The obstruction comes from the factor $\chi(m)$, which depends on the summation variable $m$. This dependence cannot be absorbed into a product on $\K\A$. Hence $L_\chi$ does not factor through $\Yd$, and it is not the image of any quasi-shuffle element under $L_\diamond$. It can only be defined by the semi-standard sum. This is the kind of multiple Schur series that is really different from the values built from columns through $L_\diamond$.

Therefore, an interesting point arises. The previous result implies that, for a non-principal character, the Schur multiple Dirichlet $L$-values cannot be obtained from the column values through this quasi-shuffle linearization. This suggests that their $\Q$-span may be larger than the $\Q$-span generated by multiple Dirichlet $L$-values of the same character. We illustrate this phenomenon in the following.

Let $\chi=\chi_3$ be the quadratic Dirichlet character modulo $3$. By a column Schur multiple Dirichlet $L$-value we mean an admissible fixed-character multiple Dirichlet $L$-value in \cite{AK,Yam}, namely a value of the form
\begin{equation*}
    L_{\chi}\!\left(
    \begin{ytableau}
        h_1\\
        \vdots\\
        h_r
    \end{ytableau}
    \right)
    =
    \sum_{0<m_1<\cdots<m_r}
    \frac{\chi(m_1)\cdots\chi(m_r)}
    {m_1^{h_1}\cdots m_r^{h_r}} .
\end{equation*}
Let $\mathcal C_{\chi,w}$ be the $\Q$-span of such column Schur multiple Dirichlet $L$-values of weight $w$, where the weight is $w:=h_1+\cdots+h_r$. Let $\mathcal S_{\chi,w}$ be the $\Q$-span of all admissible Schur multiple Dirichlet $L$-values of weight $w$.  Also let $\mathcal C_{\chi,w}^{\times}$ be the $\Q$-span of products of one or more column Schur multiple Dirichlet $L$-values of total weight $w$.

Numerical evidence suggests that the space $\mathcal C_\chi:=\sum_w \mathcal C_{\chi,w}$ is not closed under products. Already the product of two depth-one values of weight \(2\) produces, in weight \(4\), a principal-character diagonal contribution
\begin{equation*}
    L(2,\chi_3)^2
    =
    2L_{\chi_3}\!\left(
    \begin{ytableau}
        2\\
        2
    \end{ytableau}
    \right)
    +
    \sum_{\substack{m\geq 1\\ 3\nmid m}}\frac{1}{m^4}
    =
    2L_{\chi_3}\!\left(
    \begin{ytableau}
        2\\
        2
    \end{ytableau}
    \right)
    +
    \frac{80}{81}\zeta(4)\in\mathcal{C}_{\chi_3,4}^{\times}.
\end{equation*}
Equivalently,
\begin{equation*}
    L(2,\chi_3)^2
    =
    L_{\chi_3}\!\left(
    \begin{ytableau}
        2&2
    \end{ytableau}
    \right)
    +
    L_{\chi_3}\!\left(
    \begin{ytableau}
        2\\
        2
    \end{ytableau}
    \right)\in \mathcal{S}_{\chi_3,4}.
\end{equation*}
Thus the product of two column Schur multiple Dirichlet $L$-values naturally lies in $\mathcal{S}_{\chi,w}$, but it contains a new contribution coming from $\chi_3(m)^2=\chi_0(m)$, where $\chi_0$ is the principal character modulo $3$. Numerically, this contribution is not absorbed by $\mathcal{C}_{\chi,w}$.

On the other hand, $\mathcal{S}_{\chi}:=\sum_w\mathcal{S}_{\chi,w}$ is an algebra. For admissible tableaux $\mathbf h,\mathbf g\in\Y$ we have
\begin{equation*}
    L_{\chi}(\mathbf h)L_{\chi}(\mathbf g)
    =
    L_{\chi}(\mathbf h\ast\mathbf g).
\end{equation*}
Hence products of Schur multiple Dirichlet $L$-values are again Schur multiple Dirichlet $L$-values, in the sense of the Young tableaux algebra.

Since every admissible column value has weight at least $2$, a product of two or more factors has weight at least $4$; hence $\mathcal{C}_{\chi_3,3}^{\times}=\mathcal{C}_{\chi_3,3}$. In weight $3$, numerical computations suggest that 
\[\mathcal{C}_{\chi_3,3}=\mathcal{C}_{\chi_3,3}^{\times}\subsetneq\mathcal{S}_{\chi_3,3}.\]
The admissible Schur multiple Dirichlet $L$-values in weight $3$ are
\begin{equation*}
    L_{\chi_3}\!\left(
    \begin{ytableau}
        3
    \end{ytableau}
    \right),
    \qquad
    L_{\chi_3}\!\left(
    \begin{ytableau}
        1\\
        2
    \end{ytableau}
    \right),
    \qquad
    L_{\chi_3}\!\left(
    \begin{ytableau}
        1&2
    \end{ytableau}
    \right).
\end{equation*}
The row and the column differ by the principal character term
\begin{equation*}
    L_{\chi_3}\!\left(
    \begin{ytableau}
        1&2
    \end{ytableau}
    \right)
    -
    L_{\chi_3}\!\left(
    \begin{ytableau}
        1\\
        2
    \end{ytableau}
    \right)
    =
    \sum_{\substack{m\geq 1\\ 3\nmid m}}\frac{1}{m^3}
    =
    \frac{26}{27}\zeta(3).
\end{equation*}
Therefore $ L_{\chi_3}\!\left(
    \begin{ytableau}
        1&2
    \end{ytableau}
    \right)$ belongs to $\mathcal C_{\chi_3,3}$ if and only if $\zeta(3)$ belongs to $\operatorname{span}_{\Q}\{L(3,\chi_3),L_{\chi_3}\!\left( \begin{ytableau}
        1\\
        2
    \end{ytableau}\right)\}$. A \textsc{pslq} search finds \emph{no} such relation. Thus, at this level of evidence, $\mathcal{C}_{\chi_3,3}^{\times}\subsetneq \mathcal{S}_{\chi_3,3}$. 
    
    Thus the computations suggest that, in this algebra setup, the $\Q$-space spanned by Schur multiple Dirichlet $L$-values is larger than the $\Q$-space spanned by multiple Dirichlet $L$-values. This is different from the cases of Schur multiple zeta values and of Schur multiple Eisenstein series.

    As mentioned before, for any multiple Schur series, we can always find a ``nice'' algebraic setup to ensure it can be linearized to the quasi-shuffle algebra by $L_{\diamond}$. Here is the second algebraic setup for Schur multiple Dirichlet $L$-values.

    Let $\A_2=\N\times \N$ and $\calP=\C$. For $m\in\Z_M^+$, we set 
    \begin{equation*}
    \begin{aligned}
         f_m': \K(\N\times \N)& \longrightarrow \C\\
        (h,a)&\longmapsto\frac{\chi^a(m)}{m^h}.
    \end{aligned}
    \end{equation*}
    The $\diamond$ in $\K(\N\times \N)$ is defined by
    \begin{equation*}
        (a,b)\diamond(c,d)=(a+c,b+d),\qquad a,b,c,d\in\N.
    \end{equation*}
    Then the Schur multiple Dirichlet $L$-value associated with $\{f_m'\}_{m\in\X}$ is given by $L'_{\chi}(\mathbf{h})=\lim_{M\to\infty}F_{\Z_M^+}(\mathbf{h})$.
    One can easily check that for any $\mathbf{h}=\{h_{i,j}\}\in \K\boxed{\A_1}$, letting $\mathbf{h}'=\{(h_{i,j},1)\}\in\K\boxed{\A_2}$, we have
    \begin{equation*}
        L_{\chi}(\mathbf{h})=L'_{\chi}(\mathbf{h}').
    \end{equation*}
    In this algebraic setup, $\{f_m'\}_{m\in\X}$ is a family of algebra homomorphisms. Hence, by \cref{thm:MSS-character} applied to each truncation and passing to the limit, as for the Schur multiple zeta values, the Schur multiple Dirichlet $L$-value $L'_{\chi}$ is an algebra homomorphism on the subalgebra of $\K\boxed{\N\times \N}_{\diamond}$ generated by shifted connected tableaux whose corner entries $(h,a)$ satisfy $h\geq2$. This condition guarantees absolute convergence, since $|\chi^a(m)|\leq1$. One needs to note that, in this case, the ``column'' values are not only the multiple Dirichlet $L$-values. For example, 
    \ytableausetup{boxsize=1.35em}
    \begin{align*}
       L_{\chi}\!\left(
    \begin{ytableau}
        1&2
    \end{ytableau}
    \right)=\!\sum_{0<m_1\le m_2}\!\frac{\chi(m_1)\chi(m_2)}{m_1 m_2^2}
    =\sum_{0<m}\frac{\chi^2(m)}{m^3}+\!\sum_{0<m_1< m_2}\!\frac{\chi(m_1)\chi(m_2)}{m_1 m_2^2}
    =L'_{\chi}\left(\begin{ytableau}
        \scriptstyle (3,2)
    \end{ytableau}\right)+L'_{\chi}\left(\begin{ytableau}
       \scriptstyle  (1,1)\\
       \scriptstyle (2,1)
    \end{ytableau}\right).
    \end{align*}

\end{document}